\newtheorem{theorem}{Theorem}[section]
\newtheorem{corollary}[theorem]{Corollary}
\newtheorem{lemma}[theorem]{Lemma}
\newtheorem{proposition}[theorem]{Proposition}
\theoremstyle{definition}
\newtheorem{definition}[theorem]{Definition}
\newtheorem{remark}[theorem]{Remark}
\newtheorem{example}[theorem]{Example}
\numberwithin{equation}{section}
\newcommand{\id}{\textnormal{id}}
\def\card#1{ | #1 | }
\newcommand{\noncr}{\mathcal{NC}}
\def\noncrk#1{\mathcal{NC}^{#1}}
\DeclareRobustCommand{\pairPartitionRecurrence}[2]
{
   	\vcenter
   	{	
   		\hbox
   		{
   			\tikz
   			{
				\tikzstyle{leg} = [circle, draw=black, fill=black!100, text=black!100, thin, inner sep=0pt, minimum size=2.0]

				\draw[-] (0,0) -- (0,0.5);
				\node () at (0,0) [leg] {.};
				\draw[-] (0.75,0) -- (0.75,0.5);
				\node () at (0.75,0) [leg] {.};
				\draw[-] (0,0.5) -- (0.75,0.5);
				\node () at (0.375,0) [label={[label distance = -1.5mm]above:\small{$#1$}}] {};
				\node () at (1.125,0) [label={[label distance = -1.5mm]above:\small{$#2$}}] {};
			}
		}
	} \! \! \! \!
}
\def\sgn{\textnormal{sgn}\,}
\def\unitInTheAlgebra{\mathbf{1}}
\def\ovcirc#1{\accentset{\circ}{\mathcal{#1}}}
\def\dirsum#1#2{\bigoplus \limits_{#1}^{#2}}
\def\vProduct{\mathop{ \mathlarger{\mathlarger{\mathlarger{\varovee}}} }}
\newcommand{\bound}{B}
\newcommand{\lsim}{\sim}
\newcommand{\rsim}{\sim}
\newcommand{\nrsim}{\nsim}
\newcommand{\diff}{d} 
\begin{document}

\baselineskip=17pt

\title[V-monotone independence]{V-monotone independence}

\author[A. Dacko]{Adrian Dacko}
\address{Faculty of Pure and Applied Mathematics\\ Wrocław University of Science and Technology\\
Wybrzeże Wyspiańskiego 27 \\
50-370 Wrocław, Poland}
\email{adrian.dacko@pwr.edu.pl}

\date{\today}


\begin{abstract}
We introduce and study a new notion of non-commutative independence, called V-monotone independence, which can be viewed as an extension of the monotone independence of Muraki. We investigate the combinatorics of mixed moments of V-monotone random variables and prove the central limit theorem. We obtain a combinatorial formula for the limit moments and we find the solution of the differential equation for the moment generating function in the implicit form.
\end{abstract}

\subjclass[2010]{Primary: 46L53. Secondary: 05A18.}

\keywords{Non-commutative probability, monotone independence, anti-monotone independence, V-monotone independence, V-monotone product of states, V-monotone central limit theorem}

\maketitle


\section{Introduction}
There are several notions of non-commutative independence, including classical independence, free independence~\cite{VoiculescuDykemaNica1992} and Boolean independence~\cite{{Bozejko1987},{SpeicherWoroudi1995}}. With each of these notions one can associate a product of states. According to the original version of the axiomatic approach~\cite{BenGhorbalSchurmann2002}, this product should satisfy four axioms. It was shown there that only the products associated with these three notions meet the four requirements. 

However, Muraki introduced a new concept of non-commutative independence, called monotone (see~\cite{{Muraki1997}, {Muraki2000}, {Muraki2001}}), which does not satisfy one of the axioms (commutativity). In this case, not arbitrary families of states, but only those indexed by totally ordered sets, can be considered. Despite these facts, the concept of monotone independence has attracted a considerable interest. Let us also remark that there is a related notion of the anti-monotone independence which can be obtained from the monotone independence by reversing the order of the index set. In a new version of the axiomatic approach, with the commutativity axiom dropped, Muraki showed in~\cite{Muraki2003} that there exist only five ``natural'' products of states, namely: classical, free, Boolean, monotone, and anti-monotone. Our product, associated with the V-monotone independence, satisfies two of four axioms: it fulfills the axioms of universality and normalization, but it is neither commutative, nor associative.

Other types of non-commutative independence have also been studied in the literature (see for instance~\cite{{Bozejko1991}, {BozejkoLeinertSpeicher1996}, {Hasebe2011}, {LenczewskiSalapata2006}, {Lenczewski2010}, {Wysoczanski2007}}). Notions of independence were also unified in certain ways (see~\cite{{Franz2003}, {Lenczewski1998}, {Lenczewski2010}}). Interpolations between ``axiomatic'' notions (and also between independences themselves) were also studied (see~\cite{{BozejkoWysoczanski2001}, {Hasebe2011},  {LenczewskiSalapata2006}, {LenczewskiSalapata2008}, {Wysoczanski2007}}). All types of non-commutative independence are related to many interesting topics, such as Fock spaces, operator algebras, convolutions of measures, combinatorics of partitions and trees.

Our model can be viewed as an extension of both monotone and anti-monotone models. In particular, the monotone and the anti-monotone Fock spaces are the subspaces of the V-monotone Fock space. Moreover, the class of V-monotonically labeled partitions contains the classes of monotonically labeled and anti-monotonically labeled partitions.

From a formal point of view, our definition resembles the language used in~\cite{LenczewskiSalapata2006}. We discuss the combinatorics, the associated product of states, and the central limit theorem. We express the moment generating function for the central limit law in the implicit form (for that purpose, we solve a certain Abel differential equation).  It is not clear whether one can find the limit law in the explicit form. Moreover, the existing enumeration results on ordered labeled trees do not seem to contain V-monotone labelings, which confirms our supposition that handling the limit law is highly non-trivial from the technical point of view. Therefore, this study seems to be worthy of continuation.

\section{Combinatorial definitions and notions}
For $n, m \in \mathbb{N}$, we denote $[n, n+m] = \{ n, n+1, \ldots, n+m \}$ and $[n] = [1,n]$. Let $X \subseteq \mathbb{N}_{+}$ be a set of cardinality $n \geq 0$ of the form $\{ x_1 < \ldots < x_n \}$ (for $n > 0$). By a \emph{partition} on $X$ we mean any family $\pi \subseteq 2^X$ such that for each $x \in X$ there exists exactly one $B \in \pi$ such that $x \in B$. 

Let $\mathcal{P}(X)$ denote the set of all partitions on $X$ (if $X = [n]$, we shall simply write $\mathcal{P}(n)$). The elements of $\pi \in \mathcal{P}(X)$ are called \emph{blocks} and the elements of any $B \in \pi$ are named \emph{legs}. If $\card{B} = 1$, then its leg and $B$ are called a \emph{singleton leg} and a \emph{singleton block}, respectively. If $B$ is of the form $\{x_{k}, x_{k+1},\ldots, x_{k+l}\}$, then it is named an \emph{interval}. If a block is not a singleton block, then its legs are divided into the leftmost, middle and the rightmost legs. The \emph{rightmost leg} of a block $B$ is the greatest element in $B$, the \emph{leftmost leg} of a block $B$ is the least element in $B$, and the remaining legs are referred to as \emph{middle legs}. 

Let $I$ be an arbitrary set of indices. By a \emph{labeling} of a partition $\pi$ we mean any function $\mathfrak{i} \colon \pi \mapsto I$. The pair $(\pi, \mathfrak{i})$ is called a \emph{labeled partition}. 

\begin{definition}
\label{definition:adaptabilityOfASequenceToAPartition}
We say that a sequence of indices $(i_1, \ldots, i_n)$ is \emph{adapted} to a labeled partition $(\pi, \mathfrak{i})$ if all legs of a given block $B$ have the same label $\mathfrak{i}(B)$.
\end{definition}

We illustrate Definition~\ref{definition:adaptabilityOfASequenceToAPartition} with the following example.

\begin{example}
The sequence $(3,2,3,4,3,2,3,5,5,8)$ is not adapted to any of labeled partition, presented in Fig.~\ref{figure:labeledPartitions}, whereas $(3,2,3,4,3,2,3,5,5,3)$ is adapted to the one on the left hand side and not adapted to the one on the right hand side.

\begin{figure}[H]
\centering
\begin{tikzpicture}
	\tikzstyle{leg} = [circle, draw=black, fill=black!100, text=black!100, thin, inner sep=0pt, minimum size=4.0]

    \pgfmathsetmacro {\dx}{0.25}
    \pgfmathsetmacro {\dy}{0.40}

    \pgfmathsetmacro {\x}{0}
    \pgfmathsetmacro {\y}{0}

	\draw[-] (\x+0*\dx,\y+0*\dy) -- (\x+0*\dx,\y+4*\dy);
	\node () at (\x+0*\dx,\y+0*\dy) [leg] {.};
	\draw[-] (\x+6*\dx,\y+0*\dy) -- (\x+6*\dx,\y+4*\dy);
	\node () at (\x+6*\dx,\y+0*\dy) [leg] {.};
	\draw[-] (\x+9*\dx,\y+0*\dy) -- (\x+9*\dx,\y+4*\dy);
	\node () at (\x+9*\dx,\y+0*\dy) [leg] {.};
	\draw[-] (\x+0*\dx,\y+4*\dy) -- (\x+9*\dx,\y+4*\dy);
	\node () at (\x+4.5*\dx,\y+4*\dy) [label={[label distance = -2mm]above:\smaller{3}}] {};
	
	\draw[-] (\x+1*\dx,\y+0*\dy) -- (\x+1*\dx,\y+3*\dy);
	\node () at (\x+1*\dx,\y+0*\dy) [leg] {.};
	\draw[-] (\x+5*\dx,\y+0*\dy) -- (\x+5*\dx,\y+3*\dy);
	\node () at (\x+5*\dx,\y+0*\dy) [leg] {.};
	\draw[-] (\x+1*\dx,\y+3*\dy) -- (\x+5*\dx,\y+3*\dy);
	\node () at (\x+3*\dx,\y+3*\dy) [label={[label distance = -2mm]above:\smaller{2}}] {};
	
	\draw[-] (\x+2*\dx,\y+0*\dy) -- (\x+2*\dx,\y+2*\dy);
	\node () at (\x+2*\dx,\y+0*\dy) [leg] {.};
	\draw[-] (\x+4*\dx,\y+0*\dy) -- (\x+4*\dx,\y+2*\dy);
	\node () at (\x+4*\dx,\y+0*\dy) [leg] {.};
	\draw[-] (\x+2*\dx,\y+2*\dy) -- (\x+4*\dx,\y+2*\dy);
	\node () at (\x+3*\dx,\y+2*\dy) [label={[label distance = -2mm]above:\smaller{3}}] {};
	
	\draw[-] (\x+3*\dx,\y+0*\dy) -- (\x+3*\dx,\y+1*\dy);
	\node () at (\x+3*\dx,\y+0*\dy) [leg] {.};
	\node () at (\x+3*\dx,\y+1*\dy) [label={[label distance = -2mm]above:\smaller{4}}] {};
	
	\draw[-] (\x+7*\dx,\y+0*\dy) -- (\x+7*\dx,\y+1*\dy);
	\node () at (\x+7*\dx,\y+0*\dy) [leg] {.};
	\draw[-] (\x+8*\dx,\y+0*\dy) -- (\x+8*\dx,\y+1*\dy);
	\node () at (\x+8*\dx,\y+0*\dy) [leg] {.};
	\draw[-] (\x+7*\dx,\y+1*\dy) -- (\x+8*\dx,\y+1*\dy);
	\node () at (\x+7.5*\dx,\y+1*\dy) [label={[label distance = -2mm]above:\smaller{5}}] {};
	
    \pgfmathsetmacro {\x}{5}
    \pgfmathsetmacro {\y}{0}

	\draw[-] (\x+0*\dx,\y+0*\dy) -- (\x+0*\dx,\y+4*\dy);
	\node () at (\x+0*\dx,\y+0*\dy) [leg] {.};
	\draw[-] (\x+6*\dx,\y+0*\dy) -- (\x+6*\dx,\y+4*\dy);
	\node () at (\x+6*\dx,\y+0*\dy) [leg] {.};
	\draw[-] (\x+9*\dx,\y+0*\dy) -- (\x+9*\dx,\y+4*\dy);
	\node () at (\x+9*\dx,\y+0*\dy) [leg] {.};
	\draw[-] (\x+0*\dx,\y+4*\dy) -- (\x+9*\dx,\y+4*\dy);
	\node () at (\x+4.5*\dx,\y+4*\dy) [label={[label distance = -2mm]above:\smaller{1}}] {};
	
	\draw[-] (\x+1*\dx,\y+0*\dy) -- (\x+1*\dx,\y+3*\dy);
	\node () at (\x+1*\dx,\y+0*\dy) [leg] {.};
	\draw[-] (\x+5*\dx,\y+0*\dy) -- (\x+5*\dx,\y+3*\dy);
	\node () at (\x+5*\dx,\y+0*\dy) [leg] {.};
	\draw[-] (\x+1*\dx,\y+3*\dy) -- (\x+5*\dx,\y+3*\dy);
	\node () at (\x+3*\dx,\y+3*\dy) [label={[label distance = -2mm]above:\smaller{3}}] {};
	
	\draw[-] (\x+2*\dx,\y+0*\dy) -- (\x+2*\dx,\y+2*\dy);
	\node () at (\x+2*\dx,\y+0*\dy) [leg] {.};
	\draw[-] (\x+4*\dx,\y+0*\dy) -- (\x+4*\dx,\y+2*\dy);
	\node () at (\x+4*\dx,\y+0*\dy) [leg] {.};
	\draw[-] (\x+2*\dx,\y+2*\dy) -- (\x+4*\dx,\y+2*\dy);
	\node () at (\x+3*\dx,\y+2*\dy) [label={[label distance = -2mm]above:\smaller{2}}] {};
	
	\draw[-] (\x+3*\dx,\y+0*\dy) -- (\x+3*\dx,\y+1*\dy);
	\node () at (\x+3*\dx,\y+0*\dy) [leg] {.};
	\node () at (\x+3*\dx,\y+1*\dy) [label={[label distance = -2mm]above:\smaller{4}}] {};
	
	\draw[-] (\x+7*\dx,\y+0*\dy) -- (\x+7*\dx,\y+1*\dy);
	\node () at (\x+7*\dx,\y+0*\dy) [leg] {.};
	\draw[-] (\x+8*\dx,\y+0*\dy) -- (\x+8*\dx,\y+1*\dy);
	\node () at (\x+8*\dx,\y+0*\dy) [leg] {.};
	\draw[-] (\x+7*\dx,\y+1*\dy) -- (\x+8*\dx,\y+1*\dy);
	\node () at (\x+7.5*\dx,\y+1*\dy) [label={[label distance = -2mm]above:\smaller{9}}] {};
	
	\node () at (0, -0.25) {};
\end{tikzpicture}
\caption{Labeled partitions}
\label{figure:labeledPartitions}
\end{figure}
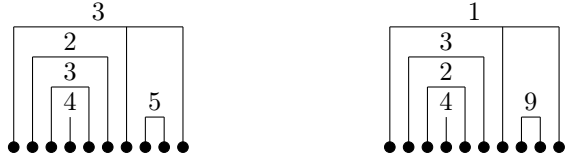
\end{example}

In the definition below we assume that $I$ is the set of all natural numbers.
\begin{definition}
\label{definition:orderedPartitions}
Let $\mathcal{OP}(n)$ be the set of all labeled partitions $(\pi, \mathfrak{i})$ such that $\mathfrak{i}$ is a bijection between $\pi$ and $[\card{\pi}]$ (here $\card{\pi}$ denotes the number of blocks in $\pi$), i.e. it is the set of all \emph{ordered partitions} on $[n]$.
\end{definition}

Observe that for each partition $\pi \in \mathcal{P}(n)$ there exists exactly $|\pi|!$ labelings $\mathfrak{i}$ such that $(\pi, \mathfrak{i}) \in \mathcal{OP}(n)$. Here, the order is not an additional structure, but it is implemented by a labeling.

We say that blocks $B$ and $B'$ have a \emph{crossing} if there exist $l_1, l_2 \in B$ and $l'_1, l'_2 \in B'$ such that $l_1 < l'_1 < l_2 < l'_2$ or $l'_1 < l_1 < l'_2 < l_2$. A partition without blocks with crossings is called a \emph{non-crossing partition}. The set of all non-crossing partitions on $X = \{ x_1 < \ldots < x_n \}$ will be denoted by $\noncr(X)$ ($\noncr(n)$ if $X=[n]$). 

Let $\noncr = \bigcup \limits_{n=0}^\infty \noncr(n)$ and let $\noncr^2 = \bigcup \limits_{n=0}^\infty \noncrk{2}(2n)$, where $\noncrk{2}(2n)$ is the set of all non-crossing pair partitions on $[2n]$, i.e. each block of $\pi \in \noncrk{2}(2n)$ has exactly two legs. In the special case $n=0$, by $\noncr(0)$ and $\noncrk{2}(0)$ we mean the set $\{ \emptyset \}$.

For $\pi \in \noncr$, we say that a block $B \in \pi$ is \emph{inner} with respect to a block $B' = \{ p_1 < \ldots < p_q \} \in \pi$ and we write $B' < B$ if there exists $i \in [q-1]$ such that for any leg $k \in B$ we have $p_i < k < p_{i+1}$. In such case we say that $B'$ is \emph{outer} with respect to $B$. We say that $B'$ is the \emph{nearest outer block} with respect to $B$ and we write $B' \prec B$ if $B' < B$ and there is no block $B''$ such that $B' < B'' < B$ (see~\cite{Lenczewski2010}). We say that blocks $B$ and $B'$ are \emph{neighboring} if one of them is the nearest outer block with respect to the other. If a block $B$ has no outer blocks, we call it an \emph{outer block} in $\pi$. For more information about non-crossing partitions, see~\cite{NicaSpeicher2006}.

\section{V-monotone independence}
\label{section:VmonotoneIndependence}
Throughout Sections~\ref{section:VmonotoneIndependence} and~\ref{section:hilbertSpaceRealization}, by $(I, \leq)$ we will understand an arbitrary totally ordered set. In this section, $(\mathcal{A}, \varphi)$ stands for a non-commutative probability space (i.e. $\mathcal{A}$ is a unital algebra with the unit $\unitInTheAlgebra$ and $\varphi$ is a state, that is a linear functional $\varphi \colon \mathcal{A} \mapsto \mathbb{C}$ such that $\varphi(\unitInTheAlgebra) = 1$) and $(\mathcal{A}_i)_{i \in I}$ for a family of subalgebras of $\mathcal{A}$. We assume that there exists a family $(\unitInTheAlgebra_i)_{i \in I}$ such that for any $i \in I$ the element $\unitInTheAlgebra_i$ is the inner unit of $\mathcal{A}_i$, i.e. for any $a \in \mathcal{A}_i$ we have $\unitInTheAlgebra_i a = a \unitInTheAlgebra_i = a$.

\begin{definition}
\label{definition:I_n-indices}
For $1 \leq m \leq n$ let $I_{n,m} = \{ (i_1, \ldots, i_n) \in I^n: i_1 > \ldots > i_m < \ldots < i_n \}$ (we put $I_{1,1} = I$) and $I_n \coloneqq \bigcup \limits_{m=1}^n I_{n,m}$. For $i \in I$ and $(i_1, \ldots, i_n) \in I_n$ we write $i \lsim (i_1, \ldots, i_n)$ if $(i, i_1, \ldots, i_n) \in I_{n+1}$ and $(i_1, \ldots, i_n) \rsim i$ if $(i_1, \ldots, i_n, i) \in I_{n+1}$.
\end{definition}

\begin{definition}
\label{definition:definitionOfVmonotoneIndependence}
We say that the family $(\mathcal{A}_i)_{i \in I}$ is \emph{V-monotonically independent} with respect to $\varphi$ if for any $i \in I$ we have $\varphi(\unitInTheAlgebra_i) = 1$, and for any sequence $(i_1, \ldots, i_n) \in I^n$ such that $i_k \neq i_{k+1}$ for $k \in [n-1]$ and for any $a_1 \in \mathcal{A}_{i_1}, \ldots, a_n \in \mathcal{A}_{i_n}$  the following conditions are satisfied:
\begin{itemize}
	\item[ (i)] $\varphi(a_1 \ldots a_n) = 0$, if $\varphi(a_1) = \ldots = \varphi(a_n) = 0$ (the freeness condition),
	\item[(ii)] for any $j \in [n]$ we have
	$$
	\varphi(a_1 \ldots a_{j-1} \unitInTheAlgebra_{i_j} a_{j+1} \ldots a_n) = 
	\begin{cases}
	\varphi(a_1 \ldots a_{j-1} a_{j+1} \ldots a_n) & \text{if $j \leq r$} \\
	0 & \text{otherwise,}
	\end{cases}
	$$
	whenever $\varphi(a_1) = \ldots = \varphi(a_{j-1}) = 0$ and $r \in [n]$ is such that $I_r \ni (i_1, \ldots, i_r) \nrsim i_{r+1}$ (without the condition with `$\nrsim$' for $r=n$).
\end{itemize}
We say that a family $(a_i)_{i \in I}$ of non-commutative random variables from $\mathcal{A}$ is V-monotone if the family of algebras generated by $a_i$ and $\unitInTheAlgebra_i$, for each $i \in I$, is V-monotone.
\end{definition}

This definition can be viewed as follows: on the Fock space level, $\unitInTheAlgebra_{i}$ is the projection onto the subspace onto which the algebra $\mathcal{A}_i$ acts non-trivially. The letter V in the word
``V-monotone'' comes from the shape of the graph of a non-monotonic sequence from $I_{n}$ which resembles the letter V.

\begin{remark}
\label{remark:axiomsOfNoncommutativeIndependence}
As we mentioned in Introduction, the product of states (Definition~\ref{definition:definitionOfVmonotoneProductOfStates}) associated with the above definition of independence fulfills two of four axioms stated in~\cite{BenGhorbalSchurmann2002} (and slightly modified in~\cite{Muraki2003}, see Definition~2.1 therein). Namely, the V-monotone product of states is neither commutative (see Example~\ref{example:firstMixedMomentsForTwoAlgebras}) nor associative (see Proposition~\ref{proposition:notAssociative}). However, the axiom of universality is fulfilled, which follows from Corollary~\ref{corollary:universalPolynomialFromRecursion}, and also the axioms of normalization (axioms (U4) in~\cite{Muraki2003}) hold: the first one (extension) is satisfied by the definition and the second one (factorization) follows from Example~\ref{example:firstMixedMomentsForTwoAlgebras}. Moreover, the V-monotone product of more than two states meets the proper generalizations of the latter two requirements.
\end{remark}

The conditions stated in Definition~\ref{definition:definitionOfVmonotoneIndependence} give a recursive formula for any mixed moment.
\begin{proposition}
\label{proposition:recursiveFormulaForMixedMoments}
We assume that $(\mathcal{A}_i)_{i \in I}$ is V-monotone independent with respect to $\varphi$, $(i_1, \ldots, i_n) \in I^n$ is such that neighboring terms are different, and $r \in [n]$ is such that $I_r \ni (i_1, \ldots, i_r) \nrsim i_{r+1}$ (if $(i_1, \ldots, i_n) \in I_n$, then we set $r=n$). Let $a_1 \in \mathcal{A}_{i_1}, \ldots, a_n \in \mathcal{A}_{i_n}$. For $k \in [n]$ and $\varepsilon \colon [n] \mapsto \{0,1\}$ define
$$
a_k^{\varepsilon(k)} = 
\begin{cases}
a_k - \varphi(a_k) \unitInTheAlgebra_{i_k} & \text{if $\varepsilon(k) = 0$} \\
\varphi(a_k) \unitInTheAlgebra_{i_k} & \text{if $\varepsilon(k) = 1$.}
\end{cases}
$$
The following recursive formula holds:
\begin{equation}
\label{eq:recursiveFormulaForMixedMoments }
\varphi(a_1 \ldots a_n) = \sum \limits_{k=1}^r \varphi(a_k) \varphi(a_1^0 \ldots a_{k-1}^0 a_{k+1} \ldots a_n) \text{,}
\end{equation}
where $\varphi(a_1^0 \ldots a_{k-1}^0 a_{k+1} \ldots a_n) = \varphi(a_2 a_3 \ldots a_n)$ for $k=1$.
\end{proposition}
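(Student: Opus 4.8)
The plan is to expand the product $a_1 \ldots a_n$ using the decomposition $a_k = a_k^0 + a_k^1$, where $a_k^0 = a_k - \varphi(a_k)\unitInTheAlgebra_{i_k}$ is centered and $a_k^1 = \varphi(a_k)\unitInTheAlgebra_{i_k}$ is its projection onto the inner unit. Multiplying out and using linearity of $\varphi$ gives
\begin{equation*}
\varphi(a_1 \ldots a_n) = \sum_{\varepsilon \colon [n] \to \{0,1\}} \varphi\bigl(a_1^{\varepsilon(1)} \ldots a_n^{\varepsilon(n)}\bigr).
\end{equation*}
Note that $\varphi(a_k^0) = 0$ for every $k$, that each $a_k^{\varepsilon(k)}$ still lies in $\mathcal{A}_{i_k}$, and that the index sequence $(i_1, \ldots, i_n)$ has pairwise distinct neighbors by hypothesis, so conditions (i) and (ii) of Definition~\ref{definition:definitionOfVmonotoneIndependence} may be applied to each summand.

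Next I would classify the summands according to the smallest index $k$ with $\varepsilon(k)=1$ (or else $\varepsilon \equiv 0$). The term with $\varepsilon \equiv 0$ equals $\varphi(a_1^0 \ldots a_n^0)$, which vanishes by the freeness condition (i) since all factors are centered. If the first $1$ occurs at a position $k > r$, the summand equals $\varphi(a_k)\,\varphi(a_1^0 \ldots a_{k-1}^0\, \unitInTheAlgebra_{i_k}\, a_{k+1}^{\varepsilon(k+1)} \ldots a_n^{\varepsilon(n)})$; since $\varphi(a_1^0) = \ldots = \varphi(a_{k-1}^0) = 0$, condition (ii) with $j=k>r$ forces this to be zero. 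Thus only the positions $k \in [r]$ can contribute.

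For the surviving positions, I would fix $k \in [r]$ and sum over all $\varepsilon$ whose first $1$ is at $k$, that is $\varepsilon(1) = \ldots = \varepsilon(k-1) = 0$, $\varepsilon(k)=1$, and $\varepsilon(k+1), \ldots, \varepsilon(n)$ arbitrary. Pulling out the scalar $\varphi(a_k)$ and applying condition (ii) with $j = k \leq r$ (whose hypothesis $\varphi(a_1^0)=\ldots=\varphi(a_{k-1}^0)=0$ holds) deletes the inserted unit, yielding $\varphi(a_k)\sum \varphi(a_1^0 \ldots a_{k-1}^0\, a_{k+1}^{\varepsilon(k+1)} \ldots a_n^{\varepsilon(n)})$. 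Summing the remaining free superscripts recombines $a_{k+1}^0 + a_{k+1}^1 = a_{k+1}$, and likewise for the later factors, producing exactly $\varphi(a_k)\,\varphi(a_1^0 \ldots a_{k-1}^0\, a_{k+1} \ldots a_n)$, the $k$-th term of the claimed identity (with the convention that the string reads $a_2 \ldots a_n$ when $k=1$). Collecting the contributions over $k=1, \ldots, r$ then gives the formula. The computation is essentially bookkeeping rather than a genuine obstacle; the one point that will demand care is verifying, at each invocation of condition (ii), that the centering hypothesis on the preceding factors actually holds — which it does precisely because every factor to the left of the inserted unit is of the form $a_l^0$ and hence has vanishing state.
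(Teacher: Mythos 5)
Your proposal is correct and follows essentially the same route as the paper: expand each $a_k$ as $a_k^0 + a_k^1$, group the resulting $2^n$ terms by the first position carrying a superscript $1$, kill the all-centered term via condition (i) and the terms with first unit beyond position $r$ via condition (ii), then use condition (ii) again to delete the unit at positions $k \leq r$ and resum the free superscripts to recombine $a_{k+1}, \ldots, a_n$. The only difference is presentational — you spell out explicitly the case analysis that the paper compresses into the phrase ``V-monotone independence yields.''
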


\begin{proof}
\begin{equation*}
\begin{split}
\varphi(a_1 \ldots a_n) 
& = \varphi( (a_1^0 + a_1^1) \ldots (a_n^0 + a_n^1) ) = \sum \limits_{ \varepsilon \in \{0,1\}^n } \varphi(a_1^{\varepsilon(1)} \ldots a_n^{\varepsilon(n)}) \\
& = \varphi(a_1^0 \ldots a_n^0) + \sum \limits_{j=1}^n \varphi(a_j) \sum \limits_{ \varepsilon \in \{0,1\}^{n-j}} \varphi(a_1^0 \ldots a_{j-1}^0 \unitInTheAlgebra_{i_j} a_{j+1}^{\varepsilon(j+1)} \ldots a_n^{\varepsilon(n)}) \text{.}
\end{split}
\end{equation*}
Note that V-monotone independence yields
\begin{align*}
\varphi(a_1 \ldots a_n)
& = \sum \limits_{j=1}^r \varphi(a_j) \sum \limits_{ \varepsilon \in \{0,1\}^{n-j} } \varphi(a_1^0 \ldots a_{j-1}^0 a_{j+1}^{\varepsilon(j+1)} \ldots a_n^{\varepsilon(n)}) \\
& = \sum \limits_{j=1}^r \varphi(a_j) \varphi(a_1^0 \ldots a_{j-1}^0 a_{j+1} \ldots a_n) \text{,}
\end{align*}
which establishes the desired formula.
\end{proof}

\begin{corollary}
\label{corollary:universalPolynomialFromRecursion}
For any $i_1 \neq \ldots \neq i_n$ there exists a polynomial $w$ of variables $\{ x_B: B \in \mathcal{I}(i_1, \ldots, i_n) \}$, where 
$$
\mathcal{I}(i_1, \ldots, i_n) \coloneqq \{ B \subseteq [n] : i_k = i_l \text{ for any } k, l \in B \} \text{,}
$$
such that for any $a_1 \in \mathcal{A}_{i_1}, \ldots, a_n \in \mathcal{A}_{i_n}$ we have
$$
\varphi(a_1 \ldots a_n) = w(\varphi(a_B): B \in \mathcal{I}(i_1, \ldots, i_n)) \text{,}
$$
where $a_B = a_{p_1} \ldots a_{p_q}$ for $B = \{ p_1 < \ldots < p_q \}$.
\end{corollary}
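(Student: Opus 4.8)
The natural plan is to argue by (strong) induction on the length $n$, converting the recursion of Proposition~\ref{proposition:recursiveFormulaForMixedMoments} into a recursion for the polynomial $w$. For $n=1$ one takes $w = x_{\{1\}}$, because $\{1\} \in \mathcal{I}(i_1)$ and $a_{\{1\}} = a_1$. So suppose the claim holds for every sequence of length $<n$ whose neighboring indices are distinct, and fix such a sequence $(i_1, \ldots, i_n)$ together with $r$ as in the proposition (recall $r \geq 1$, since $I_{1,1} = I$).

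Applying Proposition~\ref{proposition:recursiveFormulaForMixedMoments}, I would write $\varphi(a_1 \ldots a_n) = \sum_{k=1}^r \varphi(a_k)\, \varphi(a_1^0 \ldots a_{k-1}^0 a_{k+1} \ldots a_n)$. Each outer factor $\varphi(a_k) = \varphi(a_{\{k\}})$ is the value of the variable $x_{\{k\}}$, and $\{k\} \in \mathcal{I}(i_1, \ldots, i_n)$, so it is admissible. It remains to handle the inner moment $M_k \coloneqq \varphi(a_1^0 \ldots a_{k-1}^0 a_{k+1} \ldots a_n)$, which is the moment of the word on the index set $[n] \setminus \{k\}$ with entries $c_l \coloneqq a_l^0 \in \mathcal{A}_{i_l}$ for $l < k$ and $c_l \coloneqq a_l \in \mathcal{A}_{i_l}$ for $l > k$ (for $k=1$ it is simply $\varphi(a_2 \ldots a_n)$).

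The key point is that this word is again of the type covered by the induction hypothesis, possibly after one merge. If $i_{k-1} \neq i_{k+1}$ (in particular whenever $k \in \{1, n\}$), the surviving index sequence $(i_1, \ldots, i_{k-1}, i_{k+1}, \ldots, i_n)$ has distinct neighbors and length $n-1$, so the induction hypothesis expresses $M_k$ as a universal polynomial in the block moments $\varphi(c_B)$, with $B$ ranging over the constant-index subsets of $[n] \setminus \{k\}$. If instead $i_{k-1} = i_{k+1} =: i$, I would first merge $c_{k-1}$ and $c_{k+1}$ into the single element $a_{k-1}^0 a_{k+1} \in \mathcal{A}_i$; since the original neighbors are distinct, one checks that the merged word (of length $n-2$) again has distinct neighboring indices, and the induction hypothesis applies to it. Afterwards I would turn each block moment so produced into a polynomial in the admissible quantities $\varphi(a_{B'})$, $B' \in \mathcal{I}(i_1, \ldots, i_n)$: since every block $B$ consists of legs carrying one common index, say $j$, the corresponding factor $c_B$ is a product inside the single algebra $\mathcal{A}_j$, and expanding each centered entry via $a_l^0 = a_l - \varphi(a_l)\unitInTheAlgebra_{j}$ (and $a_{k-1}^0 a_{k+1}$ likewise in the merged case) writes $\varphi(c_B)$ as a scalar combination of terms $\varphi(a_{B'})$ with $B' \subseteq B$, together with the constant $\varphi(\unitInTheAlgebra_j) = 1$. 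As $B' \subseteq B$ still has constant index, $B' \in \mathcal{I}(i_1, \ldots, i_n)$, so each such term is admissible. Substituting these expansions into the induction-hypothesis polynomials and then into the recursion yields the required $w$.

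I expect the main obstacle to be organizational rather than conceptual: one must keep careful track of how blocks of the shortened words (especially in the merge case, where positions $k-1$ and $k+1$ are glued although $k-1 < k < k+1$) correspond to genuine constant-index subsets of the original $[n]$, and verify that deleting or merging a position never creates equal neighboring indices, so that the induction hypothesis is legitimately applicable. Universality of $w$ — its independence of the algebras and of the elements $a_l$ — is then automatic, since the recursion, the centering expansions, and the merge are all performed by formulas that do not see the particular choices involved.
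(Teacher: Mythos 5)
Your proof is correct and takes exactly the route the paper intends: the paper states this corollary without any proof, as an immediate consequence of Proposition~\ref{proposition:recursiveFormulaForMixedMoments}, and your strong induction on $n$ — with the deletion/merge dichotomy for the inner moments $M_k$ and the expansion of centered factors using $\varphi(\unitInTheAlgebra_j)=1$ — is precisely the argument left to the reader. Your attention to the case $i_{k-1}=i_{k+1}$ (where the word shortens by two) and to identifying blocks of the shortened word with constant-index subsets of $[n]$ settles the only genuine bookkeeping points.
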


\begin{example}
\label{example:firstMixedMomentsForTwoAlgebras}
Easy computations based on Proposition~\ref{proposition:recursiveFormulaForMixedMoments} give that all mixed moments up to order five are the same as the corresponding mixed moments in the free case (cf. Lecture~5 in~\cite{NicaSpeicher2006} or~\cite{VoiculescuDykemaNica1992}). Now, let $a_1, a_2, a_3$, and $b_1, b_2, b_3$ be random variables from $\mathcal{A}_1$ and $\mathcal{A}_2$, respectively. Then
\begin{align*}
\varphi(a_1 b_1 a_2 b_2 a_3) = &
\ \varphi(a_1 a_2 a_3) \varphi(b_1) \varphi(b_2) + \varphi(a_1) \varphi(a_2) \varphi(a_3) \varphi(b_1 b_2) \\
& - \varphi(a_1) \varphi(a_2) \varphi(a_3) \varphi(b_1) \varphi(b_2)
\end{align*}
and
\begin{align*}
\varphi(b_1 a_1 b_2 a_2 b_3) = & \
\varphi(a_1 a_2) \varphi(b_1 b_3) \varphi(b_2) + \varphi(a_1) \varphi(a_2) \varphi(b_1 b_2 b_3) \\
& - \varphi(a_1) \varphi(a_2) \varphi(b_1 b_3) \varphi(b_2) \text{.}
\end{align*}
These mixed moments differ from the corresponding mixed moments of free, Boolean, monotone, and anti-monotone random variables. 

We only prove the latter equality. Using Proposition~\ref{proposition:recursiveFormulaForMixedMoments} and the formulas for mixed moments of order less than five, we have
\begin{align*}
\varphi(b_1 a_1 b_2 a_2 b_3) = & \ \varphi(b_1) \varphi(a_1 b_2 a_2 b_3) + \varphi(a_1) \varphi(b_1^{0} b_2 a_2 b_3) + \varphi(b_2) \varphi(b_1^{0} a_1^{0} a_2 b_3) \\
= & \ \varphi(b_1) [ \varphi(a_1 a_2) \varphi(b_2) \varphi(b_3) + \varphi(a_1) \varphi(a_2) \varphi(b_2 b_3) - \varphi(a_1) \varphi(a_2) \varphi(b_2) \varphi(b_3) ] \\
& + \varphi(a_1) \varphi(a_2) \varphi(b_1^{0} b_2 b_3) + \varphi(b_2) \varphi(a_1^{0} a_2) \varphi(b_1^{0} b_3) \\
= & \ \varphi(a_1 a_2) \varphi(b_1 b_3) \varphi(b_2) + \varphi(a_1) \varphi(a_2) \varphi(b_1 b_2 b_3) - \varphi(a_1) \varphi(a_2) \varphi(b_1 b_3) \varphi(b_2) \text{.}
\end{align*}

Referring to Corollary~\ref{corollary:universalPolynomialFromRecursion}, the polynomial $w$ associated with $(1,2,1,2,1)$ has the form
$$
x_{135} x_{2} x_{4} + x_{1} x_{3} x_{5} x_{24} - x_{1} x_{3} x_{5} x_{2} x_{4}
$$
and the polynomial $\tilde{w}$, associated with $(2,1,2,1,2)$ has the form
$$
x_{24} x_{15} x_{3} + x_{2} x_{4} x_{135} - x_{2} x_{4} x_{15} x_{3} \text{}
$$
(here we write $x_{p_1, \ldots, p_q}$ instead of $x_{ \{ p_1 < \ldots < p_q \} }$). Note that these polynomials have integer coefficients and also observe that not all variables are used. For example, the variable $x_{35}$ does not appear in polynomial $w$.
\end{example}

Now, we find a nice characterization of V-monotone independence, using a combinatorial formula for mixed moments of random variables.

\begin{definition}
\label{definition:kappaStarFunctionals}
We define the family of multilinear functionals $(\kappa_n^*)_{n=1}^{\infty}$ recursively by
$$
\begin{cases}
\kappa_{1}^*(a) = \varphi(a) \\
\kappa_{n+1}^*(a_1, \ldots, a_{n+1}) = \kappa_{n}^*(a_1 a_2, a_3, \ldots, a_{n+1}) - \varphi(a_1) \kappa_{n}^*(a_2, a_3, \ldots, a_{n+1}) \text{.}
\end{cases}
$$
\end{definition}

\begin{proposition}
\label{proposition:kappaStarFunctionalsProperties}
For all $n \in \mathbb{N}_{+}$ and $k \in [n]$ 
\begin{multline*}
\kappa_{n+1}^*(a_1, \ldots, a_k, a_{k+1}, \ldots, a_{n+1}) \\
= \kappa_{n}^*(a_1, \ldots, a_k a_{k+1}, \ldots, a_{n+1}) - \kappa_{k}^*(a_1, \ldots, a_k) \kappa_{n+1-k}^*(a_{k+1}, \ldots, a_{n+1}) \text{.}
\end{multline*}
Moreover, in the case when $\mathcal{A} = \bound(\mathcal{H})$ for some Hilbert space $\mathcal{H}$ and vector state $\varphi$ associated with $\xi \in \mathcal{H}$, we have
$$
\kappa_{n}^*(a_1, \ldots, a_n) = \varphi(a_1 P^{\perp} a_2 P^{\perp} \ldots P^{\perp} a_n) \text{,}
$$
where $P^{\perp}$ is the orthogonal projection onto $(\mathbb{C} \, \xi)^{\perp}$. 
\end{proposition}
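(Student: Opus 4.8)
The plan is to prove the two assertions separately, since neither relies on the other: the functional equation is a purely algebraic consequence of Definition~\ref{definition:kappaStarFunctionals}, while the Hilbert-space formula follows from the same definition together with the decomposition $I = P + P^{\perp}$.

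For the first identity I would argue by induction on the number of entries, i.e.\ on $n$. For $n=1$ the only admissible value is $k=1$, and the claimed equation $\kappa_2^*(a_1,a_2) = \kappa_1^*(a_1 a_2) - \kappa_1^*(a_1)\kappa_1^*(a_2)$ is exactly the defining recursion. Assuming the identity for all tuples of length at most $n$ and all admissible positions, I fix a tuple of length $n+1$ and a position $k$. If $k=1$ the statement is again the definition (using $\kappa_1^* = \varphi$), so suppose $k \geq 2$. I would first peel off $a_1$ with the defining recursion,
\[
\kappa_{n+1}^*(a_1,\ldots,a_{n+1}) = \kappa_n^*(a_1 a_2, a_3, \ldots, a_{n+1}) - \varphi(a_1)\,\kappa_n^*(a_2,a_3,\ldots,a_{n+1}).
\]
Both terms on the right involve $\kappa_n^*$ of a tuple of length $n$, so the induction hypothesis applies: to each I would apply the case $k-1$ of the identity, merging $a_k$ and $a_{k+1}$, which occupy positions $k-1$ and $k$ of each shortened tuple.

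These two expansions produce four terms, which I would regroup into two pairs. The two terms carrying $a_k a_{k+1}$ as a single merged factor, namely $\kappa_{n-1}^*(a_1 a_2, a_3,\ldots,a_k a_{k+1},\ldots,a_{n+1})$ and $-\varphi(a_1)\kappa_{n-1}^*(a_2,a_3,\ldots,a_k a_{k+1},\ldots,a_{n+1})$, recombine by the defining recursion read backwards into $\kappa_n^*(a_1,\ldots,a_k a_{k+1},\ldots,a_{n+1})$. The two remaining product terms share the factor $\kappa_{n+1-k}^*(a_{k+1},\ldots,a_{n+1})$, and the complementary factors $\kappa_{k-1}^*(a_1 a_2, a_3,\ldots,a_k)$ and $-\varphi(a_1)\kappa_{k-1}^*(a_2,\ldots,a_k)$ recombine, again by the definition, into $\kappa_k^*(a_1,\ldots,a_k)$; this gives precisely the asserted formula. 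The step requiring the most care is the index bookkeeping at the boundary value $k=2$, where $a_2$ is already bundled with $a_1$ after peeling, so the merged block in the first term becomes $a_1 a_2 a_3$ and one must verify that the reverse application of the recursion still reproduces $\kappa_n^*(a_1, a_2 a_3, a_4, \ldots, a_{n+1})$. I expect this to be the only genuine obstacle, and it is dispatched by a direct check.

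For the Hilbert-space formula I would again induct on $n$, the case $n=1$ being the defining equality $\kappa_1^*(a) = \varphi(a)$. For the inductive step I would start from the defining recursion and substitute the inductive formula into both $\kappa_n^*$ terms, obtaining
\[
\kappa_{n+1}^*(a_1,\ldots,a_{n+1}) = \varphi(a_1 a_2 P^{\perp} a_3 \cdots P^{\perp} a_{n+1}) - \varphi(a_1)\,\varphi(a_2 P^{\perp} a_3 \cdots P^{\perp} a_{n+1}).
\]
Writing $P = I - P^{\perp}$ for the rank-one projection onto $\mathbb{C}\,\xi$, the defining property of the vector state gives $\varphi(a_1)\,\varphi(a_2 P^{\perp} a_3 \cdots a_{n+1}) = \varphi(a_1 P a_2 P^{\perp} a_3 \cdots a_{n+1})$, since inserting $P$ splits the inner product at $\xi$. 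Combining this with $a_1 a_2 = a_1 P^{\perp} a_2 + a_1 P a_2$ cancels the $P$-contribution and leaves $\varphi(a_1 P^{\perp} a_2 P^{\perp} a_3 \cdots P^{\perp} a_{n+1})$, which is the claim for $n+1$. This part is routine once the factorization of $P$ through $\xi$ is observed.
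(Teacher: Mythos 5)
Your proposal is correct and follows essentially the same route as the paper: the first identity is proved by induction on $n$, peeling off $a_1$ with the defining recursion, applying the inductive hypothesis at position $k-1$ in both resulting terms, and regrouping the four terms via the recursion read backwards, exactly as in the paper's computation with $b_l = a_l a_{l+1}$. For the Hilbert-space formula, your use of $\varphi(a P b) = \varphi(a)\varphi(b)$ together with $1 = P + P^{\perp}$ is precisely the paper's argument (the paper phrases it as showing that $\varphi(a_1 P^{\perp} \cdots P^{\perp} a_{n+1})$ satisfies the defining recursion, which is the same computation organized in the other direction).
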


\begin{proof}
The proof of the first claim is by induction on $n$. Using the definition and the induction hypothesis, we get
\begin{align*}
& \kappa^*_{n+1}(a_1, \ldots, a_k, a_{k+1}, \ldots, a_{n+1}) = \kappa^*_{n-1}(b_1, \ldots, b_k, \ldots, a_{n+1}) \\
& - \kappa^*_{k-1}(b_1, \ldots, a_k) \kappa^*_{n+1-k}(a_{k+1}, \ldots, a_{n+1}) - \varphi(a_1)\kappa^*_{n-1}(a_2, \ldots, b_k, \ldots, a_{n+1}) \\
& + \varphi(a_1) \kappa^*_{k-1}(a_2, \ldots, a_k) \kappa^*_{n+1-k}(a_{k+1}, \ldots, a_{n+1}) \\
= & \ \kappa^*_n(a_1, \ldots, b_k, \ldots, a_{n+1}) - \kappa^*_{k}(a_1, \ldots, a_k) \kappa^*_{n+1-k}(a_{k+1}, \ldots, a_{n+1}) \text{,}
\end{align*}
where, for convenience, we write $b_l = a_l a_{l+1}$, $l \in [n]$. 

Now, observe that for any $a, b \in \bound(\mathcal{H})$ we have $\varphi(a P b) = \varphi(a) \varphi(b)$, where $P = 1-P^{\perp}$. Indeed,
$$
a P b \, \xi = a P \, (\varphi(b) \xi + (b \, \xi)^{\perp}) = \varphi(b) a \, \xi = \varphi(b) (\varphi(a) \xi + (a \, \xi)^{\perp}) \text{.}
$$
Hence we have
\begin{align*}
\varphi(a_1 P^{\perp} a_2 P^{\perp} \ldots P^{\perp} a_{n+1}) 
& = \varphi(a_1 (1-P) a_2 P^{\perp} \ldots P^{\perp} a_{n+1}) \\
& = \varphi(a_1 a_2 P^{\perp} \ldots P^{\perp} a_{n+1}) - \varphi(a_1) \varphi(a_2 P^{\perp} \ldots P^{\perp} a_{n+1}) \text{.}
\end{align*}
\end{proof}

\begin{remark}
For non-negative integers $m_1, \ldots, m_n$ and a random variable $a \in \mathcal{A}$ with a distribution $\mu$, we have
$$
\kappa^*_n(a^{m_1}, \ldots, a^{m_n}) = (-1)^{|\pi|-1} k^*_{\mu}(\pi) \text{,}
$$
where $\pi$ is the interval partition on $[m_1 + \ldots + m_n]$, defined by the sequence $(m_1, \ldots, m_n)$. The functions $k^*_{\mu}$, called \emph{inverse Boolean cumulant functions}, appeared in~\cite{Lenczewski2007} in the context of the so-called orthogonal convolution.
\end{remark}

\begin{definition}
\label{definition:VmonotoneLabelling}
We say that $(\pi, \mathfrak{i})$ is \emph{V-monotonically labeled} if for all $\{ B_1 \succ \ldots \succ B_r \} \subseteq \pi$ we have $(\mathfrak{i}(B_1), \ldots, \mathfrak{i}(B_r)) \in I_r$. By $\mathcal{V}(i_1, \ldots, i_n)$ we denote the set of all V-monotonically labeled non-crossing partitions $(\pi, \mathfrak{i})$ to which $(i_1, \ldots, i_n)$ is adapted. 
\end{definition}

We illustrate the definition above with two following examples.

\begin{example}
The partition on the left hand side in Fig.~\ref{figure:labeledPartitions} is V-monotonically labeled (because $(3,2,3,4) \in I_4$ and $(3,5) \in I_2$), whereas the one on the right hand side is not (since $(1,3,2,4) \notin I_4$).
\end{example}

\begin{example}
Consider the sequence $(2,7,5,7,5,2)$. One can construct six non-crossing labeled partitions adapted to it and they are shown in Fig.~\ref{figure:adaptedVMonotonePartitions}. Only the one in the lower right corner is not V-monotonically labeled, thus $\mathcal{V}(2,7,5,7,5,2)$ consists of five remaining labeled partitions.

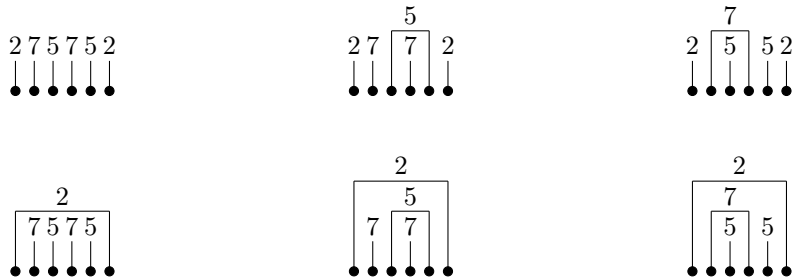
\begin{figure}[H]
\centering
\begin{tikzpicture}
    \tikzstyle{leg} = [circle, draw=black, fill=black!100, text=black!100, thin, inner sep=0pt, minimum size=2.0]

    \pgfmathsetmacro {\dx}{0.25}
    \pgfmathsetmacro {\dy}{0.40}

    \pgfmathsetmacro {\x}{0*\dx}
    \pgfmathsetmacro {\y}{0}

    \draw[-] (\x+0*\dx,\y+0*\dy) -- (\x+0*\dx,\y+1*\dy);
    \node () at (\x+0*\dx,\y+0*\dy) [leg] {.};
    \node () at (\x+0*\dx,\y+1*\dy) [label={[label distance = -2mm]above:\smaller{2}}] {};
    
    \draw[-] (\x+1*\dx,\y+0*\dy) -- (\x+1*\dx,\y+1*\dy);
    \node () at (\x+1*\dx,\y+0*\dy) [leg] {.};
    \node () at (\x+1*\dx,\y+1*\dy) [label={[label distance = -2mm]above:\smaller{7}}] {};
    
    \draw[-] (\x+2*\dx,\y+0*\dy) -- (\x+2*\dx,\y+1*\dy);
    \node () at (\x+2*\dx,\y+0*\dy) [leg] {.};
    \node () at (\x+2*\dx,\y+1*\dy) [label={[label distance = -2mm]above:\smaller{5}}] {};
    
    \draw[-] (\x+3*\dx,\y+0*\dy) -- (\x+3*\dx,\y+1*\dy);
    \node () at (\x+3*\dx,\y+0*\dy) [leg] {.};
    \node () at (\x+3*\dx,\y+1*\dy) [label={[label distance = -2mm]above:\smaller{7}}] {};
    
    \draw[-] (\x+4*\dx,\y+0*\dy) -- (\x+4*\dx,\y+1*\dy);
    \node () at (\x+4*\dx,\y+0*\dy) [leg] {.};
    \node () at (\x+4*\dx,\y+1*\dy) [label={[label distance = -2mm]above:\smaller{5}}] {};
    
    \draw[-] (\x+5*\dx,\y+0*\dy) -- (\x+5*\dx,\y+1*\dy);
    \node () at (\x+5*\dx,\y+0*\dy) [leg] {.};
    \node () at (\x+5*\dx,\y+1*\dy) [label={[label distance = -2mm]above:\smaller{2}}] {};

    \pgfmathsetmacro {\x}{18*\dx}
    \pgfmathsetmacro {\y}{0}

    \draw[-] (\x+0*\dx,\y+0*\dy) -- (\x+0*\dx,\y+1*\dy);
    \node () at (\x+0*\dx,\y+0*\dy) [leg] {.};
    \node () at (\x+0*\dx,\y+1*\dy) [label={[label distance = -2mm]above:\smaller{2}}] {};
    
    \draw[-] (\x+1*\dx,\y+0*\dy) -- (\x+1*\dx,\y+1*\dy);
    \node () at (\x+1*\dx,\y+0*\dy) [leg] {.};
    \node () at (\x+1*\dx,\y+1*\dy) [label={[label distance = -2mm]above:\smaller{7}}] {};
    
    \draw[-] (\x+2*\dx,\y+0*\dy) -- (\x+2*\dx,\y+2*\dy);
    \node () at (\x+2*\dx,\y+0*\dy) [leg] {.};
    \draw[-] (\x+4*\dx,\y+0*\dy) -- (\x+4*\dx,\y+2*\dy);
    \node () at (\x+4*\dx,\y+0*\dy) [leg] {.};
    \draw[-] (\x+2*\dx,\y+2*\dy) -- (\x+4*\dx,\y+2*\dy);
    \node () at (\x+3*\dx,\y+2*\dy) [label={[label distance = -2mm]above:\smaller{5}}] {};
    
    \draw[-] (\x+3*\dx,\y+0*\dy) -- (\x+3*\dx,\y+1*\dy);
    \node () at (\x+3*\dx,\y+0*\dy) [leg] {.};
    \node () at (\x+3*\dx,\y+1*\dy) [label={[label distance = -2mm]above:\smaller{7}}] {};
    
    \draw[-] (\x+5*\dx,\y+0*\dy) -- (\x+5*\dx,\y+1*\dy);
    \node () at (\x+5*\dx,\y+0*\dy) [leg] {.};
    \node () at (\x+5*\dx,\y+1*\dy) [label={[label distance = -2mm]above:\smaller{2}}] {};

    \pgfmathsetmacro {\x}{36*\dx}
    \pgfmathsetmacro {\y}{0}

    \draw[-] (\x+0*\dx,\y+0*\dy) -- (\x+0*\dx,\y+1*\dy);
    \node () at (\x+0*\dx,\y+0*\dy) [leg] {.};
    \node () at (\x+0*\dx,\y+1*\dy) [label={[label distance = -2mm]above:\smaller{2}}] {};
    
    \draw[-] (\x+1*\dx,\y+0*\dy) -- (\x+1*\dx,\y+2*\dy);
    \node () at (\x+1*\dx,\y+0*\dy) [leg] {.};
    \draw[-] (\x+3*\dx,\y+0*\dy) -- (\x+3*\dx,\y+2*\dy);
    \node () at (\x+3*\dx,\y+0*\dy) [leg] {.};
    \draw[-] (\x+1*\dx,\y+2*\dy) -- (\x+3*\dx,\y+2*\dy);
    \node () at (\x+2*\dx,\y+2*\dy) [label={[label distance = -2mm]above:\smaller{7}}] {};
    
    \draw[-] (\x+2*\dx,\y+0*\dy) -- (\x+2*\dx,\y+1*\dy);
    \node () at (\x+2*\dx,\y+0*\dy) [leg] {.};
    \node () at (\x+2*\dx,\y+1*\dy) [label={[label distance = -2mm]above:\smaller{5}}] {};
    
    \draw[-] (\x+4*\dx,\y+0*\dy) -- (\x+4*\dx,\y+1*\dy);
    \node () at (\x+4*\dx,\y+0*\dy) [leg] {.};
    \node () at (\x+4*\dx,\y+1*\dy) [label={[label distance = -2mm]above:\smaller{5}}] {};
    
    \draw[-] (\x+5*\dx,\y+0*\dy) -- (\x+5*\dx,\y+1*\dy);
    \node () at (\x+5*\dx,\y+0*\dy) [leg] {.};
    \node () at (\x+5*\dx,\y+1*\dy) [label={[label distance = -2mm]above:\smaller{2}}] {};
    
    \pgfmathsetmacro {\x}{0}
    \pgfmathsetmacro {\y}{-6*\dy}

    \draw[-] (\x+0*\dx,\y+0*\dy) -- (\x+0*\dx,\y+2*\dy);
    \node () at (\x+0*\dx,\y+0*\dy) [leg] {.};
    \draw[-] (\x+5*\dx,\y+0*\dy) -- (\x+5*\dx,\y+2*\dy);
    \node () at (\x+5*\dx,\y+0*\dy) [leg] {.};
    \draw[-] (\x+0*\dx,\y+2*\dy) -- (\x+5*\dx,\y+2*\dy);
    \node () at (\x+2.5*\dx,\y+2*\dy) [label={[label distance = -2mm]above:\smaller{2}}] {};
    
    \draw[-] (\x+1*\dx,\y+0*\dy) -- (\x+1*\dx,\y+1*\dy);
    \node () at (\x+1*\dx,\y+0*\dy) [leg] {.};
    \node () at (\x+1*\dx,\y+1*\dy) [label={[label distance = -2mm]above:\smaller{7}}] {};
    
    \draw[-] (\x+2*\dx,\y+0*\dy) -- (\x+2*\dx,\y+1*\dy);
    \node () at (\x+2*\dx,\y+0*\dy) [leg] {.};
    \node () at (\x+2*\dx,\y+1*\dy) [label={[label distance = -2mm]above:\smaller{5}}] {};
    
    \draw[-] (\x+3*\dx,\y+0*\dy) -- (\x+3*\dx,\y+1*\dy);
    \node () at (\x+3*\dx,\y+0*\dy) [leg] {.};
    \node () at (\x+3*\dx,\y+1*\dy) [label={[label distance = -2mm]above:\smaller{7}}] {};
    
    \draw[-] (\x+4*\dx,\y+0*\dy) -- (\x+4*\dx,\y+1*\dy);
    \node () at (\x+4*\dx,\y+0*\dy) [leg] {.};
    \node () at (\x+4*\dx,\y+1*\dy) [label={[label distance = -2mm]above:\smaller{5}}] {};
    
    \pgfmathsetmacro {\x}{18*\dx}
    \pgfmathsetmacro {\y}{-6*\dy}

    \draw[-] (\x+0*\dx,\y+0*\dy) -- (\x+0*\dx,\y+3*\dy);
    \node () at (\x+0*\dx,\y+0*\dy) [leg] {.};
    \draw[-] (\x+5*\dx,\y+0*\dy) -- (\x+5*\dx,\y+3*\dy);
    \node () at (\x+5*\dx,\y+0*\dy) [leg] {.};
    \draw[-] (\x+0*\dx,\y+3*\dy) -- (\x+5*\dx,\y+3*\dy);
    \node () at (\x+2.5*\dx,\y+3*\dy) [label={[label distance = -2mm]above:\smaller{2}}] {};
    
    \draw[-] (\x+1*\dx,\y+0*\dy) -- (\x+1*\dx,\y+1*\dy);
    \node () at (\x+1*\dx,\y+0*\dy) [leg] {.};
    \node () at (\x+1*\dx,\y+1*\dy) [label={[label distance = -2mm]above:\smaller{7}}] {};
    
    \draw[-] (\x+2*\dx,\y+0*\dy) -- (\x+2*\dx,\y+2*\dy);
    \node () at (\x+2*\dx,\y+0*\dy) [leg] {.};
    \draw[-] (\x+4*\dx,\y+0*\dy) -- (\x+4*\dx,\y+2*\dy);
    \node () at (\x+4*\dx,\y+0*\dy) [leg] {.};
    \draw[-] (\x+2*\dx,\y+2*\dy) -- (\x+4*\dx,\y+2*\dy);
    \node () at (\x+3*\dx,\y+2*\dy) [label={[label distance = -2mm]above:\smaller{5}}] {};
    
    \draw[-] (\x+3*\dx,\y+0*\dy) -- (\x+3*\dx,\y+1*\dy);
    \node () at (\x+3*\dx,\y+0*\dy) [leg] {.};
    \node () at (\x+3*\dx,\y+1*\dy) [label={[label distance = -2mm]above:\smaller{7}}] {};
    
    \pgfmathsetmacro {\x}{36*\dx}
    \pgfmathsetmacro {\y}{-6*\dy}

    \draw[-] (\x+0*\dx,\y+0*\dy) -- (\x+0*\dx,\y+3*\dy);
    \node () at (\x+0*\dx,\y+0*\dy) [leg] {.};
    \draw[-] (\x+5*\dx,\y+0*\dy) -- (\x+5*\dx,\y+3*\dy);
    \node () at (\x+5*\dx,\y+0*\dy) [leg] {.};
    \draw[-] (\x+0*\dx,\y+3*\dy) -- (\x+5*\dx,\y+3*\dy);
    \node () at (\x+2.5*\dx,\y+3*\dy) [label={[label distance = -2mm]above:\smaller{2}}] {};
    
    \draw[-] (\x+1*\dx,\y+0*\dy) -- (\x+1*\dx,\y+2*\dy);
    \node () at (\x+1*\dx,\y+0*\dy) [leg] {.};
    \draw[-] (\x+3*\dx,\y+0*\dy) -- (\x+3*\dx,\y+2*\dy);
    \node () at (\x+3*\dx,\y+0*\dy) [leg] {.};
    \draw[-] (\x+1*\dx,\y+2*\dy) -- (\x+3*\dx,\y+2*\dy);
    \node () at (\x+2*\dx,\y+2*\dy) [label={[label distance = -2mm]above:\smaller{7}}] {};
    
    \draw[-] (\x+2*\dx,\y+0*\dy) -- (\x+2*\dx,\y+1*\dy);
    \node () at (\x+2*\dx,\y+0*\dy) [leg] {.};
    \node () at (\x+2*\dx,\y+1*\dy) [label={[label distance = -2mm]above:\smaller{5}}] {};
    
    \draw[-] (\x+4*\dx,\y+0*\dy) -- (\x+4*\dx,\y+1*\dy);
    \node () at (\x+4*\dx,\y+0*\dy) [leg] {.};
    \node () at (\x+4*\dx,\y+1*\dy) [label={[label distance = -2mm]above:\smaller{5}}] {};
    
\end{tikzpicture}
\caption{All labeled non-crossing partitions, adapted to $(2,7,5,7,5,2)$}
\label{figure:adaptedVMonotonePartitions}
\end{figure}
\end{example}

\begin{definition}
\label{definition:combinatorialVmonotoneProduct}
Let us introduce a family $(\psi_n)_{n=1}^{\infty}$ of multilinear functionals such that $\psi_n \colon \bigcup \limits_{(i_1, \ldots, i_n) \in I^n} \mathcal{A}_{i_1} \times \ldots \times \mathcal{A}_{i_n} \mapsto \mathbb{C}$ by the formulas
\begin{equation}
\label{eq:psiEnAsSumOfKappaStarPi}
\psi_n(a_1, \ldots, a_n) = \dsum \limits_{(\pi, \mathfrak{i}) \in \mathcal{V}(i_1, \ldots, i_n)} \kappa_{\pi}^*(a_1, \ldots, a_n) \text{}
\end{equation}
for any $(i_1, \ldots, i_n)$ and any $a_1 \in \mathcal{A}_{i_1}, \ldots, a_n \in \mathcal{A}_{i_n}$, where
\begin{equation}
\label{eq:kappaStarPi}
\kappa_{\pi}^*(a_1, \ldots, a_n) = \dprod \limits_{ \substack{B \in \pi \\ B = \{ p_1 < \ldots < p_q \}} } \kappa_{q}^*(a_{p_1}, \ldots, a_{p_q}) \text{.}
\end{equation}

Let $\mathcal{B} \coloneqq \dbigsqcup \limits_{i \in I} \mathcal{A}_i$ be the free product of algebras without identification of units, let $\sigma_i \colon \mathcal{A}_i \mapsto \mathcal{B}$, $i \in I$, be the family of respective embeddings and let $\psi$ be the functional on $\mathcal{B}$ defined by the linear extension of
\begin{equation}
\label{eq:combinatorialVmonotoneProduct}
\psi(\sigma_{i_1}(a_1) \ldots \sigma_{i_n}(a_n)) = \psi_n(a_1, \ldots, a_n) \text{}
\end{equation}
for any $(i_1, \ldots, i_n) \in I^n$ and any $a_1 \in \mathcal{A}_{i_1}, \ldots, a_n \in \mathcal{A}_{i_n}$.
\end{definition}

Now we have to show that the value of $\psi$ on a simple product $b \in \mathcal{B}$ is independent of the choice of its factorization.

\begin{lemma}
The functional $\psi$ is well defined, i.e. for any $i_1, \ldots, i_n \in I$, $j_1, \ldots, j_m \in I$ and any $a_1 \in \mathcal{A}_{i_1}, \ldots, a_n \in \mathcal{A}_{i_n}$, $b_1 \in \mathcal{A}_{j_1}, \ldots, b_m \in \mathcal{A}_{j_m}$ if
$$
\sigma_{i_1}(a_1) \ldots \sigma_{i_n}(a_n) = \sigma_{j_1}(b_1) \ldots \sigma_{j_m}(b_m) \text{,}
$$
then
$$
\psi(\sigma_{i_1}(a_1) \ldots \sigma_{i_n}(a_n)) = \psi(\sigma_{j_1}(b_1) \ldots \sigma_{j_m}(b_m)) \text{.}
$$
\end{lemma}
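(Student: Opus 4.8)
The plan is to reduce the statement to a single algebraic identity for the functionals $\psi_n$ and then to prove that identity by a combinatorial bijection. In the free product $\mathcal{B} = \bigsqcup_{i \in I} \mathcal{A}_i$ without identification of units, every element admits a unique \emph{reduced} expression $\sigma_{l_1}(c_1) \ldots \sigma_{l_p}(c_p)$ with $l_1 \neq l_2 \neq \ldots \neq l_p$, and any factorization of a given $b$ is turned into this reduced form by repeatedly replacing two adjacent factors $\sigma_i(a) \sigma_i(a')$ of equal index by the single factor $\sigma_i(a a')$. Hence the two factorizations in the statement reduce to one and the same expression, and it suffices to show that $\psi$ is invariant under one such merging step, i.e. that whenever $i_k = i_{k+1}$ we have
\begin{equation*}
\psi_{n+1}(a_1, \ldots, a_k, a_{k+1}, \ldots, a_{n+1}) = \psi_{n}(a_1, \ldots, a_k a_{k+1}, \ldots, a_{n+1}) \text{.}
\end{equation*}
Multilinearity of $\psi_n$ covers the degenerate case $a_k a_{k+1} = 0$.

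To prove this identity I would split $\mathcal{V}(i_1, \ldots, i_{n+1})$ into the family $\mathcal{S}$ of labeled partitions in which the legs $k$ and $k+1$ lie in a common block, and the family $\mathcal{D}$ in which they lie in different blocks. For $\pi \in \mathcal{S}$ these are consecutive legs $p_s = k$, $p_{s+1} = k+1$ of one block $B = \{ p_1 < \ldots < p_q \}$, and applying the splitting relation of Proposition~\ref{proposition:kappaStarFunctionalsProperties} to the factor $\kappa_{\card{B}}^*(\ldots, a_k, a_{k+1}, \ldots)$ gives
\begin{equation*}
\kappa_{\pi}^*(a_1, \ldots, a_{n+1}) = \kappa_{\pi'}^*(a_1, \ldots, a_k a_{k+1}, \ldots, a_{n+1}) - \kappa_{\pi^{\mathrm{s}}}^*(a_1, \ldots, a_{n+1}) \text{,}
\end{equation*}
where $\pi'$ is obtained from $\pi$ by contracting the legs $k, k+1$ into one and $\pi^{\mathrm{s}}$ by splitting $B$ into $B_1 = \{p_1, \ldots, p_s\}$ and $B_2 = \{p_{s+1}, \ldots, p_q\}$. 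Summing over $\mathcal{S}$, the first terms reproduce $\psi_{n}(a_1, \ldots, a_k a_{k+1}, \ldots, a_{n+1})$ once I check that contraction is a bijection from $\mathcal{S}$ onto $\mathcal{V}(i_1, \ldots, i_{k-1}, i_k, i_{k+2}, \ldots, i_{n+1})$. It then remains to establish the cancellation $\sum_{\pi \in \mathcal{D}} \kappa_{\pi}^* = \sum_{\pi \in \mathcal{S}} \kappa_{\pi^{\mathrm{s}}}^*$.

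The heart of the argument is the claim that $\pi \mapsto \pi^{\mathrm{s}}$ is a bijection from $\mathcal{S}$ onto $\mathcal{D}$, and here the hypothesis $i_k = i_{k+1}$ is essential. If $k \in B_1$ and $k+1 \in B_2$ with $B_1 \neq B_2$ stood in a nesting relation, then the chain joining them through nearest-outer blocks would at some step contain two directly nested blocks; the adjacency of $k$ and $k+1$ leaves no room for an intermediate block on the relevant side, so these two directly nested blocks would be $B_1$ and $B_2$ themselves, carrying the equal labels $\mathfrak{i}(B_1) = \mathfrak{i}(B_2) = i_k$. This is impossible, since directly nested blocks correspond to adjacent entries of a V-shaped sequence in $I_2$, which are necessarily distinct. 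Consequently $B_1$ and $B_2$ must be \emph{siblings} with $k = \max B_1$ and $k+1 = \min B_2$, which is exactly the shape produced by splitting. The inverse map merges two such sibling blocks; since all of $B_1$, $B_2$ and $B_1 \cup B_2$ carry the label $i_k$, every nearest-outer chain through the merged (resp. split) block has an unchanged label sequence, so the V-monotone labeling, the non-crossing property and adaptedness are preserved in both directions. As $\pi^{\mathrm{s}}$ is literally the corresponding element of $\mathcal{D}$, the summand $\kappa_{\pi^{\mathrm{s}}}^*$ attached to $\pi \in \mathcal{S}$ is precisely the summand counted in the sum over $\mathcal{D}$, and the two sums coincide.

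The main obstacle is precisely this combinatorial classification: one must verify, using both the V-monotone labeling condition and the adjacency of $k$ and $k+1$, that no partition in $\mathcal{D}$ can have $B_1$ and $B_2$ nested at any depth, so that $\mathcal{D}$ consists exactly of the sibling configurations, and that merging or splitting sibling blocks preserves membership in $\mathcal{V}$. Once this is settled, telescoping the $\kappa^*$-splitting relation over $\mathcal{S}$ yields the merging identity, and well-definedness of $\psi$ follows from the uniqueness of the reduced form.
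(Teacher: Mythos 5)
Your proposal is correct and follows essentially the same route as the paper's own proof: your classes $\mathcal{S}$ and $\mathcal{D}$ are the paper's $\mathcal{U}_1$ and $\mathcal{U}_2$, your contraction and block-splitting maps are the paper's bijections $g$ and $f^{-1}$, and your sibling classification (adjacency of $k,k+1$ plus the fact that neighboring blocks in a V-monotone labeling must carry distinct labels) is exactly the paper's key observation $\max(B_1)=k=\min(B_2)-1$. The only cosmetic difference is that you telescope Proposition~\ref{proposition:kappaStarFunctionalsProperties} from the split side so the $\mathcal{D}$-terms cancel, whereas the paper expands each contracted term as a sum of one $\mathcal{U}_1$-term and one $\mathcal{U}_2$-term; these are the same identity rearranged.
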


\begin{proof}
Let $(i_1, \ldots, i_n) \in I^n$, $a_1 \in \mathcal{A}_{i_1}, \ldots, a_n \in \mathcal{A}_{i_n}$ and $k \in [n]$ be such that $a_k = b_k b'_k$ for some $b_k, b'_k \in \mathcal{A}_{i_k}$. It sufficies to show that
\begin{equation}
\label{eq:combFormulaGenFirstStep}
\dsum \limits_{(\pi, \mathfrak{i}) \in \mathcal{V}(i_1, \ldots, i_n)} \kappa^{*}_{\pi}(a_1, \ldots, a_k, \ldots, a_n) = \dsum \limits_{(\pi, \mathfrak{i}) \in \mathcal{U}} \kappa^{*}_{\pi}(a_1, \ldots, b_k, b'_k, \ldots, a_n) \text{,}
\end{equation}
where $\mathcal{U} = \mathcal{V}(i_1, \ldots, i_{k-1}, i_k, i_k, i_{k+1}, \ldots, i_n)$. First, observe that the set $\mathcal{U}$ can be divided into two disjoint sets of the same cardinality, namely $\mathcal{U} = \mathcal{U}_1 \cup \mathcal{U}_2$. We define $\mathcal{U}_1$ as the set of all labeled partitions $(\pi, \mathfrak{i}) \in \mathcal{U}$ such that there exists a block $B \in \pi$ satisfying $k, k+1 \in B$ and we put $\mathcal{U}_2 = \mathcal{U} \setminus \mathcal{U}_1$. Introduce two functions: $f \colon \mathcal{U}_2 \mapsto \mathcal{U}_1$ and $g \colon \mathcal{U}_1 \mapsto \mathcal{V}(i_1, \ldots, i_n)$. Fix $(\pi, \mathfrak{i}) \in \mathcal{U}_2$ and let $B_1, B_2 \in \pi$ be such that $k \in B_1$ and $k+1 \in B_2$. Note that these blocks have the same label and, since the partition is V-monotonically labeled, $\max(B_1) = k = \min(B_2) - 1$. We define $f(\pi, \mathfrak{i})$ as the labeled partition $\left( (\pi \setminus \{ B_1, B_2 \}) \cup \{ B_1 \cup B_2 \}, \mathfrak{i}' \right)$ such that 
$$
\mathfrak{i}'(B) = 
\begin{cases}
\mathfrak{i}(B) & \text{if $B \neq B_1 \cup B_2$} \\
\mathfrak{i}(B_1) = \mathfrak{i}(B_2) & \text{if $B = B_1 \cup B_2$.}
\end{cases}
$$
Of course, $f$ is a bijection between $\mathcal{U}_2$ and $\mathcal{U}_1$. For $(\pi, \mathfrak{i}) \in \mathcal{U}_1$, we define $g(\pi, \mathfrak{i})$ as the partition in which the legs $k$ and $k+1$ are merged into one, $k$-th leg, and we do not change the label of any block. One can see that $g$ is also a bijection, between $\mathcal{U}_1$ and $\mathcal{V}(i_1, \ldots, i_n)$. 

Let $(\pi, \mathfrak{i}) \in \mathcal{V}(i_1, \ldots, i_n)$ and let $B_0 \in \pi$ of the form $\{ p_1 < \ldots < p_q \}$ be such that $p_r = k$ for some $r \in [q]$. We obtain from Proposition~\ref{proposition:kappaStarFunctionalsProperties} that
\begin{align*}
\kappa^{*}_q(a_{p_1}, \ldots, a_k, \ldots, a_{p_q})
= & \ \kappa^{*}_{q+1}(a_{p_1}, \ldots, b_k, b'_k, \ldots, a_{p_q}) \\
& + \kappa^{*}_k(a_{p_1}, \ldots, b_k) \kappa^{*}_{q+1-k}(b'_k, \ldots, a_{p_q}) \text{,}
\end{align*}
therefore,
$$
\kappa^{*}_{\pi}(a_1, \ldots, a_k, \ldots, a_n) = \kappa^{*}_{\pi'}(a_1, \ldots, b_k, b'_k, \ldots, a_n) + \kappa^{*}_{\pi''}(a_1, \ldots, b_k, b'_k, \ldots, a_n) \text{,}
$$
where $\mathcal{U}_1 \ni (\pi', \mathfrak{i}') = g^{-1}(\pi, \mathfrak{i})$ and $\mathcal{U}_2 \ni (\pi'', \mathfrak{i}'') = (g \circ f)^{-1}(\pi, \mathfrak{i})$, which immediately implies~\eqref{eq:combFormulaGenFirstStep}.

Now, let $a_1 \in \mathcal{A}_{i_1}, \ldots, a_n \in \mathcal{A}_{i_n}$. Assume that 
$$
(i_1, \ldots, i_n) = (\underbrace{j_1, \ldots, j_1}_\text{$n_1$ times}, \ldots, \underbrace{j_m, \ldots, j_m}_\text{$n_m$ times}) \text{,}
$$
where $(j_1, \ldots, j_m) \in I^m$ consists of neighboring different indices. Let $b_1 = a_1 \ldots a_{n_1}, \ldots, b_m = a_{n_1 + \ldots + n_{m-1} + 1} \ldots a_{n_1 + \ldots + n_m}$. We conclude that
$$
\dsum \limits_{(\pi, \mathfrak{i}) \in \mathcal{V}(j_1, \ldots, i_m)} \kappa^{*}_{\pi}(b_1, \ldots, b_m) = \dsum \limits_{(\pi, \mathfrak{i}) \in \mathcal{V}(i_1, \ldots, i_n)} \kappa^{*}_{\pi}(a_1, \ldots, a_n) \text{,}
$$
by formula~\eqref{eq:combFormulaGenFirstStep} and induction, which completes the proof.
\end{proof}

\begin{lemma}
\label{lemma:combFormula}
Assume that $\varphi(\unitInTheAlgebra_i) = 1$ for any $i \in I$. A family $(\mathcal{A}_i)_{i \in I}$ is V-monotone independent with respect to $\varphi$ if and only if it satisfies the condition 
\begin{equation}
\label{eq:combinatorialRuleForMixedMomentsOfVmonotoneRandomVariables}
\varphi(a_1 \ldots a_n) = \psi(\sigma_{i_1}(a_1) \ldots \sigma_{i_n}(a_n)) \text{}
\end{equation}
for any $(i_1, \ldots, i_n) \in I^n$ and any $a_1 \in \mathcal{A}_{i_1}, \ldots, a_n \in \mathcal{A}_{i_n}$.
\end{lemma}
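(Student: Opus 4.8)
The plan is to reduce both implications to a single purely combinatorial identity for the functional $\psi$ that mirrors the recursion of Proposition~\ref{proposition:recursiveFormulaForMixedMoments}. Writing $a_k^0 = a_k - \varphi(a_k)\unitInTheAlgebra_{i_k}$ as in that proposition, the key claim is that for every sequence $(i_1, \ldots, i_n)$ with distinct neighbors and the associated $r$,
\begin{equation*}
\psi_n(a_1, \ldots, a_n) = \sum_{k=1}^{r} \varphi(a_k)\, \psi_{n-1}(a_1^0, \ldots, a_{k-1}^0, a_{k+1}, \ldots, a_n),
\end{equation*}
an identity that involves only the combinatorial Definition~\ref{definition:combinatorialVmonotoneProduct} and holds in any $(\mathcal{A}, \varphi)$. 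Sequences with repeated neighbors are reduced to this case by merging adjacent factors, which is legitimate for $\psi$ by the preceding well-definedness lemma and trivial for $\varphi$.

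Granting this identity, both directions follow by induction on $n$ (the base case $n=1$ being $\psi_1(a_1)=\varphi(a_1)$). For the forward implication, V-monotone independence supplies the recursion of Proposition~\ref{proposition:recursiveFormulaForMixedMoments} for $\varphi$; the inductive hypothesis rewrites each shortened moment $\varphi(a_1^0 \ldots a_{k-1}^0 a_{k+1} \ldots a_n)$ as $\psi_{n-1}(a_1^0, \ldots, a_{k-1}^0, a_{k+1}, \ldots, a_n)$, and the claimed identity then collapses the two recursions into $\varphi(a_1 \ldots a_n) = \psi_n(a_1, \ldots, a_n)$. For the converse, assuming $\varphi = \psi$ turns the claimed identity into exactly the recursion of Proposition~\ref{proposition:recursiveFormulaForMixedMoments} for $\varphi$; condition~(i) of Definition~\ref{definition:definitionOfVmonotoneIndependence} is immediate (all factors $\varphi(a_k)$ vanish), and condition~(ii) follows by applying the recursion to the product with $a_j$ replaced by $\unitInTheAlgebra_{i_j}$: since $\varphi(\unitInTheAlgebra_{i_j})=1$, the mean-subtracted factor at position $j$ vanishes, so every summand with $k>j$ dies, while those with $k<j$ die because $\varphi(a_k)=0$ there; hence only the $k=j$ summand survives, giving $\varphi(a_1 \ldots a_{j-1}a_{j+1} \ldots a_n)$ when $j\le r$ and giving $0$ when $j>r$.

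The heart of the argument is the combinatorial identity, which I would prove by a weight-preserving bijection between $\mathcal{V}(i_1, \ldots, i_n)$ and the disjoint union over $k \in [r]$ of those $(\rho, \mathfrak{j}) \in \mathcal{V}(i_1, \ldots, i_{k-1}, i_{k+1}, \ldots, i_n)$ having no singleton among positions $1, \ldots, k-1$ — exactly the partitions surviving the $a^0$-substitution in $\psi_{n-1}$, since a singleton $\{l\}$ with $l<k$ contributes the vanishing factor $\kappa_1^*(a_l^0)=\varphi(a_l^0)=0$. The map sends $(\pi, \mathfrak{i})$ to $(k_0, \pi \setminus \{k_0\})$, where $k_0$ is the position of the leftmost singleton block, and its inverse reinserts a singleton. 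Three facts make this work: (a) every non-crossing partition adapted to a distinct-neighbor sequence has a singleton, for a non-singleton block of minimal span has a non-empty gap between its first two legs whose nested blocks are forced to be singletons by minimality; (b) $k_0 \le r$, for otherwise (assuming $r<n$) legs $1, \ldots, r+1$ would be non-singletons whose blocks form a chain $C_1 \succ \ldots \succ C_{r+1}$ with labels $(i_1, \ldots, i_{r+1}) \notin I_{r+1}$, violating the V-monotone labeling; and (c) for $k\le r$, removing or inserting the singleton at $k$ preserves the V-monotone labeling, because positions $1, \ldots, k-1$ form an initial descending chain $C_1 \prec \ldots \prec C_{k-1}$ (each $C_j$ having leg $j$ as its leftmost leg) into whose deepest gap the position $k$ falls, so the only chain through $\{k\}$ carries labels $(i_1, \ldots, i_k)$, a prefix of the V-shape $(i_1, \ldots, i_r)\in I_r$ and hence in $I_k$; any subsequence of a V-shape is again a V-shape.

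It remains to verify that the bijection preserves weights, i.e. $\kappa_\pi^*(a_1, \ldots, a_n) = \varphi(a_k)\,\kappa_\rho^*(a_1^0, \ldots, a_{k-1}^0, a_{k+1}, \ldots, a_n)$ for $\rho = \pi \setminus \{k\}$. The singleton $\{k\}$ contributes the factor $\kappa_1^*(a_k)=\varphi(a_k)$, so one must check that subtracting means at positions $1, \ldots, k-1$ leaves $\kappa_\rho^*$ unchanged. By the chain structure in (c), each of those positions is the leftmost leg of its block and no block outside $C_1, \ldots, C_{k-1}$ meets $\{1, \ldots, k-1\}$, so every modified argument occupies the first slot of its factor $\kappa_q^*$; and the defining recursion of Definition~\ref{definition:kappaStarFunctionals}, together with $\varphi(a_1 - \varphi(a_1)\unitInTheAlgebra_{i_1})=0$, gives $\kappa_q^*(a_1 - \varphi(a_1)\unitInTheAlgebra_{i_1}, a_2, \ldots, a_q) = \kappa_q^*(a_1, \ldots, a_q)$. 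Summing the weight identity over the bijection yields the combinatorial identity, and with it the lemma. I expect the preservation of the V-monotone labeling in (c) and the bound $k_0\le r$ in (b) to be the main obstacle, as both depend on a careful analysis of the maximal V-shaped prefix $(i_1, \ldots, i_r)$ and of the descending leftmost chain it induces.
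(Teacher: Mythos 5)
Your architecture differs genuinely from the paper's. The paper treats the two implications asymmetrically: it first proves the ``if'' direction by verifying conditions (i)--(ii) of Definition~\ref{definition:definitionOfVmonotoneIndependence} for $\psi$ directly --- condition (i) via the singleton-factor argument, condition (ii) for $j \le r$ via a bijection that removes the singleton at the \emph{fixed} position $j$ occupied by $\unitInTheAlgebra_{i_j}$, and for $j > r$ by showing no admissible labeled partition exists at all --- and then gets the ``only if'' direction with almost no further combinatorics, by noting that $(\sigma_i(\mathcal{A}_i))_{i \in I}$ is V-monotone with respect to $\psi$ and invoking the universal polynomials of Corollary~\ref{corollary:universalPolynomialFromRecursion} with matching marginals. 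You instead prove a single unconditional recursion for $\psi$, mirroring Proposition~\ref{proposition:recursiveFormulaForMixedMoments}, via a \emph{leftmost}-singleton-removal bijection fibered over $k \in [r]$, and deduce both implications from it: the forward one by induction, the converse by substituting $\unitInTheAlgebra_{i_j}$ into the recursion. Both routes rest on the same structural facts (existence of singleton interval blocks, the leftmost-leg chain $C_1 \prec \ldots \prec C_{k-1}$, and first-slot invariance of $\kappa^*$ under mean subtraction), but your one-identity organization is a genuinely different and arguably more symmetric decomposition, and it does deliver the lemma.

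Two of your justifications are false as written, though both are repairable with tools you already use. First, ``any subsequence of a V-shape is again a V-shape'' is wrong: $(3,1,3) \in I_3$, yet its subsequence $(3,3) \notin I_2$. What your argument actually needs is only that prefixes and \emph{contiguous} substrings of a sequence in $I_n$ are again V-shaped, which is true and suffices: every chain through the inserted singleton $\{k\}$ consists of consecutive ancestors $C_a \prec \ldots \prec C_{k-1} \prec \{k\}$, so its label sequence is a contiguous substring of $(i_1, \ldots, i_k)$, never a sparse subsequence. Second, your proof of (b) breaks precisely at $k_0 = r+1$: there leg $r+1$ is itself the leftmost singleton, so legs $1, \ldots, r+1$ are \emph{not} all non-singletons. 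The claim $k_0 \le r$ still holds, but needs one extra step: legs $1, \ldots, r$ are non-singleton leftmost legs forming $C_1 \prec \ldots \prec C_r$, the nearest outer block of $\{r+1\}$ is $C_r$ (no block can interpose between positions $r$ and $r+1$), and the chain $C_1 \prec \ldots \prec C_r \prec \{r+1\}$ carries labels $(i_1, \ldots, i_{r+1}) \notin I_{r+1}$, contradicting the V-monotone labeling. (This extension of the chain by the block containing position $r+1$ is, incidentally, also what the paper's Case~2(i) tacitly requires when $j = r+1$.) With these two local patches your proof is complete.
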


\begin{proof}
We first show the second implication. Let $(i_1, \ldots, i_n)$ be a sequence such that every two neighboring indices are different and let $r \in [n]$ satisfy the condition $I_r \ni (i_1, \ldots, i_r) \nrsim i_{r+1}$ (if $(i_1, \ldots, i_n) \in I_n$, then $r = n$), and let $a_1 \in \mathcal{A}_{i_1}, \ldots, a_n \in \mathcal{A}_{i_n}$. Observe that for every $(\pi, \mathfrak{i}) \in \mathcal{V}(i_1, \ldots, i_n)$ there exists a block $B \in \pi$ which is an interval, because $\pi$ is non-crossing. Moreover, since the labels of neighboring legs are different, $B$ is a singleton block.

\textbf{Case $1^{\circ}$.} Assume that $\varphi(a_1) = \ldots = \varphi(a_n) = 0$. We will show that each summand in RHS of~\eqref{eq:psiEnAsSumOfKappaStarPi} is equal to zero. Let $\pi \in \mathcal{V}(i_1, \ldots, i_n)$ and let $k$ be a singleton leg in $\pi$. In the product $\kappa_{\pi}^*(a_1, \ldots, a_n)$, there exists a factor of the form $\kappa_{1}^*(a_k) = \varphi(a_k) = 0$, and hence $\kappa_{\pi}^*(a_1, \ldots, a_n) = 0$.

\textbf{Case $2^{\circ}$.} Assume that $j \in [n]$ is such that $\varphi(a_1) = \ldots = \varphi(a_j) = 0$ and $a_j = \unitInTheAlgebra_{i_j}$. Consider two cases. 

\textbf{(i)} Let $j > r$. Fix $(\pi, \mathfrak{i}) \in \mathcal{V}(i_1, \ldots, i_n)$. If some $k \in [j-1]$ is a singleton leg, then of course $\kappa_{1}^*(a_k) = 0$ and thus $\kappa_{\pi}^*(a_1, \ldots, a_n) = 0$. 
If there is no singleton leg among $[j-1]$, then every leg in this set is the leftmost leg of its block. On the contrary, suppose that some leg among $[j-1]$ is a middle or the rightmost leg in its block and let $k \in B$ be the smallest such leg. Since neighboring indices in $(i_1, \ldots, i_n)$ are different, there exists a non-crossing partition $\pi'$ on the interval $[\min(B)+1, k-1]$ such that $\pi' \subseteq \pi$, which of course has an interval block. Moreover, it has to be a singleton block, which contradicts our assumption.

In such a case we have $(i_1, \ldots, i_{j-1}) = (\mathfrak{i}(B_1), \ldots \mathfrak{i}(B_{j-1}))$, where $B_l$ is determined by the condition $l = \min(B_l)$, $l \in [j-1]$. However $(i_1, \ldots, i_{j-1}) \notin I_{j-1}$ by assumption, hence $(\pi, \mathfrak{i})$ cannot belong to $\mathcal{V}(i_1, \ldots, i_n)$, a contradiction.

\textbf{(ii)} Let $j \leq r$. We will show that
$$
\dsum \limits_{(\pi, \mathfrak{i}) \in \mathcal{V}(i_1, \ldots, i_n)} \kappa_{\pi}^*(a_1, \ldots, a_n) = \dsum \limits_{(\pi, \mathfrak{i}) \in \mathcal{U}'} \kappa_{\pi}^*(a_1, \ldots, a_{j-1}, a_{j+1}, \ldots, a_n) \text{,}
$$
where $\mathcal{U}' = \mathcal{V}(i_1, \ldots, i_{j-1}, i_{j+1}, \ldots, i_n)$. Let $\mathcal{U}_j$ be the set of all labeled partitions $\pi \in \mathcal{V}(i_1, \ldots, i_n)$ in which $1, \ldots, j-1$ are the leftmost legs in their blocks and $j$ is a singleton leg and let $\mathcal{U}'_j$ be the analogous subset of $\mathcal{V}$, without the $j$-th leg's condition. 

We claim that if $(\pi, \mathfrak{i}) \in \mathcal{V}(i_1, \ldots, i_n) \setminus \mathcal{U}_j$ and $(\pi', \mathfrak{i}') \in \mathcal{U}' \setminus \mathcal{U}'_j$, then $\kappa_{\pi}^*(a_1, \ldots, a_n) = \kappa_{\pi'}^*(a_1, \ldots, a_{j-1}, a_{j+1}, \ldots, a_n) = 0$. Indeed, if we take $(\pi, \mathfrak{i}) \in \mathcal{V}(i_1, \ldots, i_n) \setminus \mathcal{U}_j$, then either some leg among $[j-1]$ is not the leftmost leg in its block and therefore, using similar arguments as before, there is a singleton leg among this set, which implies our assertion (and similarly for $(\pi', \mathfrak{i}') \in \mathcal{U}' \setminus \mathcal{U}'_j$), or $j$ is not a singleton leg and it has to be the leftmost leg in its block, say $B = \{ p_1 < \ldots < p_q \}$. By the definition of $\kappa_{q}^*$, the factor $\kappa_{q}^*(a_{p_1}, \ldots, a_{p_q}) = \kappa_{q}^*(\unitInTheAlgebra_{i_j}, \ldots, a_{p_q}) = 0$ and our assertion is true. We have shown that 
\begin{equation}
\label{eq:indep1}
\varphi(a_1 \ldots a_n) = \dsum \limits_{(\pi, \mathfrak{i}) \in \mathcal{U}_j} \kappa_{\pi}^*(a_1, \ldots, a_n)
\end{equation} 
and
\begin{equation}
\label{eq:indep2}
\varphi(a_1 \ldots a_{j-1} a_{j+1} \ldots a_n) = \dsum \limits_{(\pi', \mathfrak{i}') \in \mathcal{U}'_j} \kappa_{\pi'}^*(a_1, \ldots, a_{j-1}, a_{j+1}, \ldots, a_n) \text{.}
\end{equation}
There exists a bijection between $\mathcal{U}_j$ and $\mathcal{U}'_j$ (the image of $(\pi, \mathfrak{i})$ is the labeled partition, which arises by removing the $j$-th leg from $\pi$). Since $j$ is a singleton leg in each $\pi$ on the RHS of~\eqref{eq:indep1} and $\kappa^*_1(\unitInTheAlgebra_{i_j}) = 1$, the RHS of~\eqref{eq:indep1} is equal to the RHS of~\eqref{eq:indep2} and the proof of this implication is complete. 

Now, we prove the second implication. Observe that we have already shown that the family $(\sigma_i(\mathcal{A}_i))_{i \in I}$ is V-monotone with respect to $\psi$. Let $i_1 \neq \ldots \neq i_n \in I$ and $a_1 \in \mathcal{A}_{i_1}, \ldots, a_n \in \mathcal{A}_{i_n}$. Corollary~\ref{corollary:universalPolynomialFromRecursion} implies that there exists a polynomial $w$ such that
$$
\varphi(a_1 \ldots a_n) = w(\varphi(a_B): B \in \mathcal{I}(i_1, \ldots, i_n))
$$
and
$$
\psi(\sigma_{i_1}(a_1) \ldots \sigma_{i_n}(a_n)) = w(\psi(\tilde{a}_B): B \in \mathcal{I}(i_1, \ldots, i_n)) \text{,}
$$
where $\tilde{a}_B = \sigma_{j}(a_{p_1}) \ldots \sigma_{j}(a_{p_q})$ for $B = \{ p_1 < \ldots < p_q \}$ and $j \coloneqq i_{p_1} = \ldots = i_{p_q}$. Since $\sigma_j$ is a homomorphism, by the definition of $\psi$, we have $(\psi \circ \sigma_j)(a_B) = (\psi_1 \circ \sigma_j)(a_B) = \varphi(a_B)$, which gives~\eqref{eq:combinatorialRuleForMixedMomentsOfVmonotoneRandomVariables}.
\end{proof}

\begin{corollary}
\label{corollary:combFormula}
If the family $(\mathcal{A}_i)_{i \in I}$ is V-monotone independent with respect to $\varphi$, then for any $i_1, \ldots, i_n \in I$ and any $a_1 \in \mathcal{A}_{i_1}, \ldots, a_n \in \mathcal{A}_{i_n}$ we have
\begin{equation}
\label{eq:combFormula}
\varphi(a_1 \ldots a_n) = \dsum \limits_{(\pi, \mathfrak{i}) \in \mathcal{V}(i_1, \ldots, i_n)} \kappa_{\pi}^*(a_1, \ldots, a_n) \text{.}
\end{equation}
\end{corollary}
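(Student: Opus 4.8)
The plan is to obtain this as an immediate consequence of Lemma~\ref{lemma:combFormula}, since the substantive combinatorial work has already been carried out there and in Definition~\ref{definition:combinatorialVmonotoneProduct}. The only thing that needs checking is that the standing hypothesis of Lemma~\ref{lemma:combFormula}, namely $\varphi(\unitInTheAlgebra_i) = 1$ for every $i \in I$, is automatically available under the hypothesis of the corollary.

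First I would observe that this normalization condition is built into the very definition of V-monotone independence: Definition~\ref{definition:definitionOfVmonotoneIndependence} opens by requiring $\varphi(\unitInTheAlgebra_i) = 1$ for all $i \in I$. Hence, whenever $(\mathcal{A}_i)_{i \in I}$ is V-monotone independent with respect to $\varphi$, the assumption of Lemma~\ref{lemma:combFormula} is satisfied, and the ``only if'' direction of that lemma applies verbatim.

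Next I would invoke precisely that direction to conclude, for any $(i_1, \ldots, i_n) \in I^n$ and any $a_1 \in \mathcal{A}_{i_1}, \ldots, a_n \in \mathcal{A}_{i_n}$,
$$
\varphi(a_1 \ldots a_n) = \psi(\sigma_{i_1}(a_1) \ldots \sigma_{i_n}(a_n)) \text{.}
$$
Finally I would unwind the definition of the functional $\psi$: equation~\eqref{eq:combinatorialVmonotoneProduct} identifies the right-hand side with $\psi_n(a_1, \ldots, a_n)$, and equation~\eqref{eq:psiEnAsSumOfKappaStarPi} rewrites $\psi_n(a_1, \ldots, a_n)$ as the sum $\sum_{(\pi, \mathfrak{i}) \in \mathcal{V}(i_1, \ldots, i_n)} \kappa_{\pi}^*(a_1, \ldots, a_n)$, which is exactly the asserted formula~\eqref{eq:combFormula}.

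Since every step is a direct appeal to an already-established result or to the defining equations of $\psi$, I do not expect any genuine obstacle here; the corollary is purely a matter of transporting Lemma~\ref{lemma:combFormula} through Definition~\ref{definition:combinatorialVmonotoneProduct}. If anything deserves a moment of care, it is simply making explicit that the normalization $\varphi(\unitInTheAlgebra_i) = 1$ demanded by the lemma is not an extra hypothesis but is subsumed by the definition of V-monotone independence.
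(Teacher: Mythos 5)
Your proposal is correct and matches the paper's intent exactly: the paper states this corollary without a separate proof precisely because it is the direction ``V-monotone independence $\Rightarrow$ formula~\eqref{eq:combinatorialRuleForMixedMomentsOfVmonotoneRandomVariables}'' of Lemma~\ref{lemma:combFormula}, combined with unwinding $\psi$ via~\eqref{eq:combinatorialVmonotoneProduct} and~\eqref{eq:psiEnAsSumOfKappaStarPi}. Your added observation that the hypothesis $\varphi(\unitInTheAlgebra_i)=1$ of the lemma is automatic from Definition~\ref{definition:definitionOfVmonotoneIndependence} is the right (and only) point needing verification.
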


\begin{remark}
Formulas analogous to~\eqref{eq:combFormula} can be proven in the free and monotone cases, but one has to replace $\mathcal{V}(i_1, \ldots, i_n)$ with other classes of labeled partitions: one replaces the V-monotone labeling with free labeling (each block has a different label than its nearest outer block) and monotone labeling (each block has a greater label than its nearest outer block), respectively.
\end{remark}

\section{Hilbert space realization}
\label{section:hilbertSpaceRealization}
Consider the family of $C^{*}$-probability spaces $(\mathcal{A}_i, \varphi_i)$ with units $(\unitInTheAlgebra_i)_{i \in I}$ (i.e. additionally for each $i \in I$ and $a \in \mathcal{A}_i$ we have $\varphi_i(a^{*} a) \geq 0$) and the family $(\mathcal{H}_i, \sigma_i,  \xi_i)_{i \in I}$ of the associated GNS triples (that is for any $i \in I$ and any $a \in \mathcal{A}_i$ we have $\varphi_i(a) = \langle \sigma_i(a) \, \xi_i, \xi_i \rangle$). Let $P_i$ be the orthogonal projection onto $\mathbb{C} \, \xi_i$ and $P_i^\perp = \id_i - P_i$. For $T \in \bound(\mathcal{H}_i)$ we define $T^{\perp} \coloneqq P_i^{\perp} T$. Let $\varphi_i(T) = \langle T \, \xi_i, \xi_i \rangle$. Now we recall the definitions of the Voiculescu's free product of $(\mathcal{H}_i, \xi_i)$ and the left representation of $\bound(\mathcal{H}_i)$ on it, namely $\lambda_i$ (see for instance~\cite{VoiculescuDykemaNica1992}).

By the the Voiculescu's free product of $(\mathcal{H}_i, \xi_i)$ we understand the Hilbert space given by
$$
\mathcal{F} \coloneqq \mathbb{C} \, \xi \oplus \dirsum{n=1}{\infty} \dirsum{i_1 \neq \ldots \neq i_n}{} \ovcirc{H}_{i_1} \otimes \ldots \otimes \ovcirc{H}_{i_n} \text{}
$$
with the cannonical scalar product, where $\ovcirc{H}_i = \mathcal{H}_i \ominus \mathbb{C} \, \xi_i$ and $\xi$ is a unit vector. Introduce the vacuum state $\varphi$ on $\bound(\mathcal{F})$, namely for $T \in \bound(\mathcal{F})$ we define $\varphi(T) = \langle T \, \xi, \xi \rangle$. For each $i \in I$, we introduce an auxiliary subspace, namely
$$
\mathcal{F}_i \coloneqq \mathbb{C} \, \xi \oplus \dirsum{n=1}{\infty} \dirsum{\substack{i_1 \neq \ldots \neq i_n \\ i \neq i_1}}{} \ovcirc{H}_{i_1} \otimes \ldots \otimes \ovcirc{H}_{i_n} \text{}
$$
and the unitary operator $V_i \colon \mathcal{H}_i \otimes \mathcal{F}_i \mapsto \mathcal{F}$ given by the continuous linear extension of
\begin{align*}
& V_i \, \xi_i \otimes \xi = \xi \\
& V_i \, x \otimes \xi = x \\
& V_i \, \xi_i \otimes (x_1 \otimes \ldots \otimes x_n) = x_1 \otimes \ldots \otimes x_n \\
& V_i \, x \otimes (x_1 \otimes \ldots \otimes x_n) = x \otimes x_1 \otimes \ldots \otimes x_n \text{,}
\end{align*}
where $x \in \ovcirc{H}_i, x_1 \in \ovcirc{H}_{i_1}, \ldots, x_n \in \ovcirc{H}_{i_n}$ for $i \neq i_1 \neq \ldots \neq i_n$. We put $\lambda_i(T) \coloneqq V_i (T \otimes \id) V_i^{-1}$.

The Hilbert space defined below is an analogue of Muraki's monotone product of Hilbert spaces (see~\cite{Muraki2000}).

\begin{definition}
The \emph{V-monotone product} of Hilbert spaces $(\mathcal{H}_i, \xi_i)_{i \in I}$ is the Hilbert subspace of $\mathcal{F}$ given by
$$
\mathcal{V} \coloneqq \mathbb{C} \, \xi \oplus \dirsum{n=1}{\infty} \dirsum{(i_1, \ldots, i_n) \in I_n}{} \ovcirc{H}_{i_1} \otimes \ldots \otimes \ovcirc{H}_{i_n} \text{,}
$$
where $I_n$ were introduced in Definition~\ref{definition:I_n-indices}. For any $i \in I$, we introduce the \emph{V-monotone left representation} of $\bound(\mathcal{H}_i)$ on $\mathcal{V}$ by
$$
\widetilde{\lambda}_i(T) = U_i \lambda_i(T) U_i \text{,}
$$
where $T \in \bound(\mathcal{H}_i)$, and $U_i$ is the orthogonal projection onto 
$$
\mathcal{V}_i \coloneqq \mathbb{C} \, \xi \oplus \dirsum{n=1}{\infty} \dirsum{(i_1, \ldots, i_n) \in I_n(i)}{} \ovcirc{H}_{i_1} \otimes \ldots \otimes \ovcirc{H}_{i_n} \text{}
$$
for $I_n(i) = \{ (i_1, \ldots, i_n) \in I_n: i_1 = i \text{ or } i \lsim (i_1, \ldots, i_n) \}$.
\end{definition}

\begin{proposition}
\label{proposition:actingOfLambdaTilde}
Let $x \in \ovcirc{H}_i$, $x_k \in \ovcirc{H}_{i_k}$ and $y_k \in \ovcirc{H}_{j_k}$, $k \in [n]$, for indices such that $i \lsim (i_1, \ldots, i_n) \in I_n$ and $(j_1, \ldots, j_n) \in I_n \setminus I_n(i)$. Then, for $T \in \bound(\mathcal{H}_i)$, we have
\begin{equation*}
\begin{split}
& \widetilde{\lambda}_i(T) \, \xi = \varphi_i(T) \xi + T^{\perp} \, \xi_i \text{,} \\
& \widetilde{\lambda}_i(T) \, x = \langle T \, x, \xi_i \rangle \xi + T^{\perp} \, x \text{,} \\
& \widetilde{\lambda}_i(T) \, x \otimes x_1 \otimes \ldots \otimes x_n = \langle T \, x, \xi_i \rangle x_1 \otimes \ldots \otimes x_n + T^{\perp} \, x \otimes x_1 \otimes \ldots \otimes x_n \text{,} \\
& \widetilde{\lambda}_i(T) \, x_1 \otimes \ldots \otimes x_n = \varphi_i(T) x_1 \otimes \ldots \otimes x_n + T^{\perp} \, \xi_i \otimes x_1 \otimes \ldots \otimes x_n \text{,} \\
& \widetilde{\lambda}_i(T) \, y_1 \otimes \ldots \otimes y_n = 0 \text{.}
\end{split}
\end{equation*}
Moreover, $\widetilde{\lambda}_i(\unitInTheAlgebra_i) = U_i$.
\end{proposition}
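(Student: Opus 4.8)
The plan is to unwind the definition $\widetilde{\lambda}_i(T) = U_i \lambda_i(T) U_i$ in two stages: first compute the free action $\lambda_i(T) = V_i (T \otimes \id) V_i^{-1}$ explicitly on each of the five classes of vectors, and only afterwards account for the two projections $U_i$. For the first stage I would read off $V_i^{-1}$ by inverting the four defining relations for $V_i$: it sends $\xi \mapsto \xi_i \otimes \xi$ and $x \mapsto x \otimes \xi$ for $x \in \ovcirc{H}_i$, while on a simple tensor $x_1 \otimes \ldots \otimes x_n$ whose first index differs from $i$ it prepends $\xi_i$, and on a tensor whose first factor already lies in $\ovcirc{H}_i$ it peels that factor off. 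Applying $T \otimes \id$ multiplies the first tensor factor by $T$; I then use the orthogonal decomposition $T\eta = \langle T\eta, \xi_i \rangle \xi_i + T^\perp \eta$ (with $\eta = \xi_i$ giving the coefficient $\varphi_i(T)$, or $\eta = x$), and push the result back through $V_i$ via the same four relations. Carried out case by case this yields precisely the right-hand sides in the statement with the $U_i$'s temporarily stripped away; for instance, on $\xi$ one obtains $\varphi_i(T) \xi + T^\perp \xi_i$, and on $x_1 \otimes \ldots \otimes x_n$ with $i \neq i_1$ one obtains $\varphi_i(T)\, x_1 \otimes \ldots \otimes x_n + T^\perp \xi_i \otimes x_1 \otimes \ldots \otimes x_n$.

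The second stage is bookkeeping with $U_i$. For each of the first four cases I would verify two membership facts: that the input vector lies in $\mathcal{V}_i$, so the right-hand $U_i$ acts as the identity, and that every summand of the output computed above again lies in $\mathcal{V}_i$, so the left-hand $U_i$ is invisible as well. Both reduce to deciding whether the relevant index sequence belongs to $I_n(i)$. The crucial observation---and the point I expect to require the most care---is the dictionary between the relation $i \lsim (i_1, \ldots, i_n)$ and membership in the sets $I_n(i)$. By definition $i \lsim (i_1, \ldots, i_n)$ means $(i, i_1, \ldots, i_n) \in I_{n+1}$; this single hypothesis simultaneously places $(i_1, \ldots, i_n)$ in $I_n(i)$ (via the clause ``$i \lsim (i_1, \ldots, i_n)$'') and places $(i, i_1, \ldots, i_n)$ in $I_{n+1}(i)$ (via the clause ``first index equals $i$''). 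Consequently prepending a factor from $\ovcirc{H}_i$ or deleting the leading such factor never leaves $\mathcal{V}_i$, so both projections act trivially and the four formulas follow. The singleton cases $\xi$ and $x \in \ovcirc{H}_i$ use only that $\xi \in \mathcal{V}_i$ and that $(i) \in I_1(i)$.

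For the fifth case the hypothesis is exactly that $(j_1, \ldots, j_n) \in I_n \setminus I_n(i)$, so $y_1 \otimes \ldots \otimes y_n$ lies in the orthogonal complement of $\mathcal{V}_i$; the right-hand $U_i$ annihilates it and hence $\widetilde{\lambda}_i(T)\, y_1 \otimes \ldots \otimes y_n = 0$. Finally, the identity $\widetilde{\lambda}_i(\unitInTheAlgebra_i) = U_i$ is immediate once one notes that $\unitInTheAlgebra_i$ is represented by the identity $\id_i$ on $\mathcal{H}_i$: since $V_i$ is unitary, $\lambda_i(\id_i) = V_i (\id \otimes \id) V_i^{-1} = \id_{\mathcal{F}}$, and therefore $\widetilde{\lambda}_i(\unitInTheAlgebra_i) = U_i\, \id_{\mathcal{F}}\, U_i = U_i^2 = U_i$ by idempotency of the orthogonal projection. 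The whole argument is computational; the only genuine subtlety is the translation between $\lsim$ and the index sets $I_n(i)$, and once that is fixed each case closes by direct substitution.
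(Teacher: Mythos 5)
Your proposal is correct and takes essentially the same route as the paper's proof: both unwind $\widetilde{\lambda}_i(T) = U_i V_i (T \otimes \id) V_i^{-1} U_i$ case by case, decompose the first tensor factor via $T\eta = \langle T\eta, \xi_i \rangle \xi_i + T^{\perp}\eta$, and use the membership of the index sequences in $I_n(i)$ (resp.\ $I_{n+1}(i)$) to see that both projections $U_i$ act trivially, or kill the vector in the fifth case. The only difference is thoroughness: the paper writes out just the second and fourth equalities as representative cases, whereas you treat all five plus $\widetilde{\lambda}_i(\unitInTheAlgebra_i) = U_i$, which is consistent with its argument.
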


\begin{proof}
We will prove for instance, the second and fourth equalities. We have
\begin{align*}
\widetilde{\lambda}_i(T) \, x & = U_i V_i (T \otimes \id) \, x \otimes \xi = U_i V_i \, (\langle T \, x, \xi_i \rangle \xi_i \otimes \xi + (T^{\perp} \, x) \otimes \xi) \\
& = U_i \, (\langle T \, x, \xi_i \rangle \xi + T^{\perp} \, x) = \langle T \, x, \xi_i \rangle \xi + T^{\perp} \, x
\end{align*}
and for $v = x_1 \otimes \ldots \otimes x_n$
\begin{align*}
\widetilde{\lambda}_i(T) \, v & = U_i V_i (T \otimes \id) \, \xi_i \otimes v = U_i V_i (\varphi_i(T) \xi_i \otimes v + (T^{\perp} \, \xi_i) \otimes v) \\
& = U_i (\varphi_i(T) v + (T^{\perp} \, \xi_i) \otimes v) = \varphi_i(T) v + (T^{\perp} \, \xi_i) \otimes v \text{.}
\end{align*}
\end{proof}

\begin{proposition}
The operator $\widetilde{\lambda}_i$ is a non-unital $*$-representation.
\end{proposition}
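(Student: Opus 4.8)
The plan is to verify the three defining properties of a $*$-representation — linearity, compatibility with the involution, and multiplicativity — together with non-unitality. Linearity of $\widetilde{\lambda}_i$ is immediate from the linearity of $T \mapsto \lambda_i(T)$ and the formula $\widetilde{\lambda}_i(T) = U_i \lambda_i(T) U_i$. For the involution I would first record that $\lambda_i$ is a genuine $*$-homomorphism on $\bound(\mathcal{H}_i)$: the amplification $T \mapsto T \otimes \id$ is a $*$-homomorphism and $V_i$ is unitary, so $\lambda_i(T)^{*} = V_i (T^{*} \otimes \id) V_i^{-1} = \lambda_i(T^{*})$ and $\lambda_i(ST) = \lambda_i(S)\lambda_i(T)$. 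Since $U_i$ is an orthogonal projection, $U_i^{*} = U_i$, whence $\widetilde{\lambda}_i(T)^{*} = (U_i \lambda_i(T) U_i)^{*} = U_i \lambda_i(T^{*}) U_i = \widetilde{\lambda}_i(T^{*})$. Non-unitality follows from the last assertion of Proposition~\ref{proposition:actingOfLambdaTilde}, namely $\widetilde{\lambda}_i(\unitInTheAlgebra_i) = U_i$, which is a projection and not the identity on $\mathcal{V}$ (as $\mathcal{V}_i$ is a proper subspace of $\mathcal{V}$).

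The essential point is multiplicativity, and I would reduce it to the following invariance claim: for every $T \in \bound(\mathcal{H}_i)$ one has $\lambda_i(T)\, \mathcal{V}_i \subseteq \mathcal{V}_i$. Granting this, the middle projection in $\widetilde{\lambda}_i(S)\widetilde{\lambda}_i(T) = U_i \lambda_i(S) U_i \lambda_i(T) U_i$ becomes redundant: for $v \in \mathcal{V}_i$ we have $U_i v = v$ and $U_i \lambda_i(T) v = \lambda_i(T) v$ by invariance, so $\widetilde{\lambda}_i(S)\widetilde{\lambda}_i(T) v = U_i \lambda_i(S) \lambda_i(T) v = U_i \lambda_i(ST) v = \widetilde{\lambda}_i(ST) v$, the last equality again using $U_i v = v$; for $v$ orthogonal to $\mathcal{V}_i$ both sides vanish because $U_i v = 0$. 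Hence $\widetilde{\lambda}_i(ST) = \widetilde{\lambda}_i(S)\widetilde{\lambda}_i(T)$ on all of $\mathcal{V}$.

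To prove the invariance claim I would test $\lambda_i(T) = V_i (T \otimes \id) V_i^{-1}$ on the vectors spanning $\mathcal{V}_i$ by means of the explicit action of $V_i$ and $V_i^{-1}$: the vacuum $\xi$, the simple tensors $x_1 \otimes \ldots \otimes x_n$ with $(i_1, \ldots, i_n) \in I_n$, $i_1 = i$, and those with $i \lsim (i_1, \ldots, i_n)$ (so $i_1 \neq i$). In each case $\lambda_i(T) v$ is a combination of the original tensor, a tensor with an extra leading factor $T^{\perp}\xi_i \in \ovcirc{H}_i$, and, when $i_1 = i$, a shorter tensor obtained by deleting the leading $\ovcirc{H}_i$-factor; these are precisely the values displayed in Proposition~\ref{proposition:actingOfLambdaTilde}. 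What remains is the combinatorial verification that each resulting index sequence again lies in the relevant set $I_m(i)$: prepending $i$ to a sequence keeps it in $I_{m}(i)$ trivially, while deleting a leading $i_1 = i$ from $(i_1, \ldots, i_n) \in I_n$ yields $(i_2, \ldots, i_n) \in I_{n-1}$ with $i \lsim (i_2, \ldots, i_n)$, since $(i, i_2, \ldots, i_n) = (i_1, \ldots, i_n) \in I_n$. This closure of the family $I_n(i)$ under prepending and deleting a leading copy of $i$ is where the V-shape definition of $I_n$ is used.

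I expect the invariance claim to be the main obstacle: it is exactly what makes the compression by $U_i$ multiplicative rather than merely positive, and it hinges on checking that the free-product action of $\lambda_i(T)$ never produces an index sequence outside $I_n(i)$. Once this combinatorial closure is in place, the remaining steps are formal manipulations with the projection $U_i$. An alternative, equally valid route would be to verify multiplicativity directly on each of the five families of basis vectors listed in Proposition~\ref{proposition:actingOfLambdaTilde}, but the invariance argument is shorter and isolates the one genuinely combinatorial fact.
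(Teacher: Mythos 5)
Your proposal is correct and follows essentially the same route as the paper: the paper's entire proof consists of the observation that $\lambda_i(T)\,\mathcal{V}_i \subseteq \mathcal{V}_i$ makes the middle projection redundant, i.e. $U_i \lambda_i(T) U_i \lambda_i(S) U_i = U_i \lambda_i(T)\lambda_i(S) U_i$, together with the fact that $\lambda_i$ is a $*$-homomorphism. You additionally verify the invariance claim on spanning vectors (which the paper asserts without proof) and spell out the adjoint and non-unitality points, so your write-up is a more detailed version of the same argument rather than a different one.
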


\begin{proof}
Let $T, S \in \bound(\mathcal{H}_i)$. Since $\lambda_i(T) \, \mathcal{V}_i \subseteq \mathcal{V}_i$ and $U_i$ is an orthogonal projection onto $\mathcal{V}_i$, we get $U_i \lambda_i(T) U_i \lambda_i(S) U_i = U_i \lambda_i(T) \lambda_i(S) U_i$. The fact that $\lambda_i$ is a $*$-homomorphism finishes the proof.
\end{proof}

\begin{definition}
\label{definition:definitionOfVmonotoneProductOfStates}
We define the \emph{V-monotone product of states} on $\mathcal{A} = \dbigsqcup \limits_{i \in I} \mathcal{A}_i$ (the free product of $C^{*}$-algebras without identification of units) as the state $\phi$ given by the continuous linear extension of
\begin{equation*}
\begin{split}
\phi(a_1 \ldots a_n) = \varphi( (\widetilde{\lambda}_{i_1} \circ \sigma_{i_1})(a_1) \ldots (\widetilde{\lambda}_{i_n} \circ \sigma_{i_n})(a_n) )
\end{split}
\end{equation*}
for any $(i_1, \ldots, i_n) \in I^n$ and any $a_1 \in \mathcal{A}_{i_1}, \ldots, a_n \in \mathcal{A}_{i_n}$. We introduce the notation:
$$
\vProduct \limits_{i \in I} \varphi_i \coloneqq \phi \qquad \text{and} \qquad \vProduct \limits_{i \in I} (\mathcal{A}_i, \varphi_i) \coloneqq (\mathcal{A}, \phi) \text{.}
$$
\end{definition}

\begin{theorem}
\label{thm:realization}
The family $( (\widetilde{\lambda}_i \circ \sigma_i)(\mathcal{A}_i))_{i \in I}$ is V-monotone independent with respect to $\varphi$.
\end{theorem}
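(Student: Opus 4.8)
The plan is to verify directly that the family $((\widetilde{\lambda}_i\circ\sigma_i)(\mathcal{A}_i))_{i\in I}$, together with the vacuum state $\varphi$, satisfies the two conditions of Definition~\ref{definition:definitionOfVmonotoneIndependence}, the inner unit of the $i$-th algebra being $U_i=(\widetilde{\lambda}_i\circ\sigma_i)(\unitInTheAlgebra_i)=\widetilde{\lambda}_i(\id_i)$ (that $U_i$ is indeed the inner unit follows because $\widetilde{\lambda}_i$ is a homomorphism). First I would record three elementary facts. The standing requirement holds, $\varphi(U_i)=\langle U_i\,\xi,\xi\rangle=1$, since $\xi\in\mathcal{V}_i$. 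Writing $a_k=\widetilde{\lambda}_{i_k}(T_k)$ with $T_k=\sigma_{i_k}(\cdot)$, the first equation of Proposition~\ref{proposition:actingOfLambdaTilde} gives $\varphi(a_k)=\varphi_{i_k}(T_k)$. Finally, since $\widetilde{\lambda}_i$ is a $*$-representation, $a_k^{*}=\widetilde{\lambda}_{i_k}(T_k^{*})$ with $\varphi_{i_k}(T_k^{*})=\overline{\varphi_{i_k}(T_k)}$.

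The main tool is a structural lemma, proved by induction on $m$ from Proposition~\ref{proposition:actingOfLambdaTilde}: if $\varphi_{i_l}(T_l)=0$ for all $l\in[m]$ and neighboring indices are distinct, then
$$\widetilde{\lambda}_{i_1}(T_1)\cdots\widetilde{\lambda}_{i_m}(T_m)\,\xi=T_1^{\perp}\xi_{i_1}\otimes\cdots\otimes T_m^{\perp}\xi_{i_m}$$
whenever $(i_1,\ldots,i_m)\in I_m$, and equals $0$ otherwise. Indeed, the vector produced at each stage has leading index $i_{l+1}\neq i_l$, so $\widetilde{\lambda}_{i_l}(T_l)$ can only create or give $0$: the annihilation formula never applies (the leading index is never $i_l$), and the diagonal coefficient of the creation formula is $\varphi_{i_l}(T_l)=0$. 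Thus at each step one either prepends $T_l^{\perp}\xi_{i_l}$, which is possible exactly when $i_l\lsim(i_{l+1},\ldots,i_m)$ (fourth equation of Proposition~\ref{proposition:actingOfLambdaTilde}), or gets $0$ (fifth equation). Since every suffix of a V-shape is again a V-shape, the conjunction of all these suffix conditions is equivalent to $(i_1,\ldots,i_m)\in I_m$, and the lemma follows. Condition~(i) is then immediate: with all $\varphi_{i_k}(T_k)=0$, the lemma makes $\widetilde{\lambda}_{i_1}(T_1)\cdots\widetilde{\lambda}_{i_n}(T_n)\,\xi$ either $0$ or a tensor of level $n\geq 1$, in both cases orthogonal to $\xi$, whence $\varphi(a_1\cdots a_n)=0$.

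For condition~(ii), assume $\varphi_{i_l}(T_l)=0$ for $l<j$ and $a_j=U_{i_j}$. Splitting the product at the inserted unit and using self-adjointness of $\widetilde{\lambda}_{i_l}$ and of $U_{i_j}$, I would write
$$\varphi(a_1\cdots a_{j-1}\,U_{i_j}\,a_{j+1}\cdots a_n)=\langle U_{i_j}\,w,\Phi\rangle=\langle w,U_{i_j}\Phi\rangle,$$
where $w=\widetilde{\lambda}_{i_{j+1}}(T_{j+1})\cdots\widetilde{\lambda}_{i_n}(T_n)\,\xi$ and $\Phi=\widetilde{\lambda}_{i_{j-1}}(T_{j-1}^{*})\cdots\widetilde{\lambda}_{i_1}(T_1^{*})\,\xi$. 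The right-hand vector $w$ is exactly the one appearing in the product with $a_j$ deleted, and $\Phi$ is unchanged by that deletion, so the expectation of the deleted product equals $\langle w,\Phi\rangle$. The structural lemma applied to the reversed sequence $(i_{j-1},\ldots,i_1)$ now computes $\Phi$, and here reversal invariance of the sets $I_m$ turns $(i_{j-1},\ldots,i_1)\in I_{j-1}$ into $(i_1,\ldots,i_{j-1})\in I_{j-1}$, i.e.\ $j-1\leq r$.

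If $j\leq r$, then $\Phi$ is a nonzero tensor whose index satisfies $i_j\lsim(i_{j-1},\ldots,i_1)$ (again by reversal invariance, since $(i_1,\ldots,i_j)\in I_j$), so $\Phi\in\mathcal{V}_{i_j}$, hence $U_{i_j}\Phi=\Phi$ and $\langle w,U_{i_j}\Phi\rangle=\langle w,\Phi\rangle$, which is the value of the product with $a_j$ removed. If $j=r+1$, then $\Phi$ is still a nonzero tensor, but $(i_1,\ldots,i_{r+1})\notin I_{r+1}$ gives $i_j\nlsim(i_{j-1},\ldots,i_1)$, so the summand containing $\Phi$ is orthogonal to $\mathcal{V}_{i_j}$ and $U_{i_j}\Phi=0$; and if $j>r+1$, the lemma already forces $\Phi=0$. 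In both of the last two cases the expectation vanishes, which matches the $j>r$ alternative of condition~(ii). The step I expect to require the most care is precisely this case analysis: pinning down $r$ and recognizing that the reversal invariance of the $I_m$ is exactly what aligns the creation pattern of the left operators with membership in the range $\mathcal{V}_{i_j}$ of the projection $U_{i_j}$; everything else reduces to bookkeeping with Proposition~\ref{proposition:actingOfLambdaTilde}.
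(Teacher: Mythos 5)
Your proposal is correct and takes essentially the same route as the paper: your structural lemma is precisely the paper's formula \eqref{eq:representation1}, and your treatment of condition (ii) --- moving $a_1,\ldots,a_{j-1}$ across the inner product as adjoints, then letting $U_{i_j}$ act on the resulting tensor and deciding between $U_{i_j}\Phi=\Phi$ and $U_{i_j}\Phi=0$ via reversal invariance of the sets $I_m$ --- is the paper's hermiticity argument written in inner-product form. The only cosmetic blemish is the claim that $\Phi$ is a nonzero tensor (it need not be, since some $(T_l^{*})^{\perp}\xi_{i_l}$ may vanish), but your argument never actually uses nonvanishing, so this does not affect correctness.
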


\begin{proof}
Let $T_l = \sigma_{i_l}(a_l)$, $l \in [n]$. First, note that if $T_1 \in \ker \varphi_{i_1}, \ldots, T_l \in \ker \varphi_{i_l}$ for $i_1, \ldots, i_l \in I$ such that the neighboring indices are different, then by induction and Proposition~\ref{proposition:actingOfLambdaTilde}
\begin{equation}
\label{eq:representation1}
\widetilde{\lambda}_{i_1}(T_1) \ldots \widetilde{\lambda}_{i_l}(T_l) \, \xi =
\begin{cases}
T_1^{\perp} \xi_{i_1} \otimes \ldots \otimes T_l^{\perp} \xi_{i_l} & \text{if $(i_1, \ldots, i_l) \in I_l$} \\
0 & \text{otherwise.}
\end{cases}
\end{equation}
It is clear then that the freeness condition is fulfilled. Let $i_1 \neq \ldots \neq i_n$, $r \in [n]$ be such that $I_r \ni (i_1, \ldots, i_r) \nrsim i_{r+1}$ and let $j \in [n]$ be such that $T_1 \in \ker \varphi_{i_1}, \ldots, T_{j-1} \in \ker \varphi_{i_{j-1}}$ and $T_j = \unitInTheAlgebra_j$. Consider the term 
$$
\varphi \left( \widetilde{\lambda}_{i_n}(T^{*}_n) \ldots \widetilde{\lambda}_{i_{j+1}}(T^{*}_{j+1}) U_{i_j} \widetilde{\lambda}_{i_{j-1}}(T^{*}_{j-1}) \ldots \widetilde{\lambda}_{i_1}(T^{*}_1) \right) \text{.}
$$
Let $h = (T_{j-1}^{*})^{\perp} \xi_{i_{j-1}} \otimes \ldots \otimes (T_1^{*})^{\perp} \xi_{i_1}$. If $j > r$ then either $j > r+1$ and~\eqref{eq:representation1} gives our assertion, or $j = r+1$ and $U_{i_{r+1}} \, h = 0$. If $j \leq r$, then $U_{i_j} \, h = h$. The hermiticity of $\varphi$ finishes the proof.
\end{proof}

\begin{proposition}
\label{proposition:notAssociative}
The V-monotone product of states is not associative.
\end{proposition}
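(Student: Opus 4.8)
The plan is to disprove associativity by exhibiting a single mixed moment whose value depends on how the three factors are bracketed. I work with three $C^{*}$-probability spaces indexed by the totally ordered set $\{1<2<3\}$ and compare the full V-monotone product $\varphi_1\vProduct\varphi_2\vProduct\varphi_3$ with the iterated product $(\varphi_1\vProduct\varphi_2)\vProduct\varphi_3$, the latter obtained by first amalgamating $\mathcal{A}_1$ and $\mathcal{A}_2$ into a single algebra $\mathcal{A}_\alpha=\mathcal{A}_1\sqcup\mathcal{A}_2$ carrying the state $\varphi_1\vProduct\varphi_2$, labelling it $\alpha$, and then forming the two-point V-monotone product with $(\mathcal{A}_3,\varphi_3)$ in the order $\alpha<3$ (which is consistent with the global order $1<2<3$). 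Associativity would force these functionals to coincide on $\mathcal{A}_1\sqcup\mathcal{A}_2\sqcup\mathcal{A}_3$, so it suffices to produce one word on which they differ. Since, by Example~\ref{example:firstMixedMomentsForTwoAlgebras}, all mixed moments of order at most five agree with the corresponding free moments and the free product is associative, any witness must have length at least six.

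Concretely, I take each $\mathcal{A}_i$ generated by a single self-adjoint element ($a\in\mathcal{A}_1$, $b\in\mathcal{A}_2$, $c\in\mathcal{A}_3$) and consider a reduced word $w$ of length six involving all three indices, for instance the one with index sequence $(1,3,1,2,3,2)$. The full product is evaluated by Corollary~\ref{corollary:combFormula}, summing $\kappa^{*}_{\pi}$ over the V-monotonically labelled non-crossing partitions of $\mathcal{V}(1,3,1,2,3,2)$, where the admissible nestings are governed by the three-level condition $(\mathfrak{i}(B_1),\dots,\mathfrak{i}(B_r))\in I_r$. For the iterated product I rewrite $w$ as an alternating $\{\alpha,3\}$-word by merging the maximal runs of indices in $\{1,2\}$ into single elements of $\mathcal{A}_\alpha$; the resulting word has strictly smaller length (here five), so its value is dictated by mixed moments of the amalgamated family of order at most five, which coincide with the free ones, while the internal $\{1,2\}$-structure of each merged letter is re-evaluated \emph{separately} through the binary state $\varphi_1\vProduct\varphi_2$ using the two-level V-condition on $\{1<2\}$. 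The essential point is that amalgamation confines all $\mathcal{A}_1$- and $\mathcal{A}_2$-pairings to within a single maximal $\{1,2\}$-run, whereas the full product permits $\mathcal{A}_1$- and $\mathcal{A}_2$-blocks to interleave with the $\mathcal{A}_3$-blocks subject only to the three-level condition $I_r$. These two prescriptions weight the order-six partitions differently, and carrying out both sums produces two polynomials in the moments of $a,b,c$ that are not identically equal; for a generic choice of generators the offending coefficient is nonzero, so $\bigl(\varphi_1\vProduct\varphi_2\vProduct\varphi_3\bigr)(w)\neq\bigl((\varphi_1\vProduct\varphi_2)\vProduct\varphi_3\bigr)(w)$, which is precisely the failure of associativity.

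\textbf{Main obstacle.} The delicate part is not the two evaluations, which are mechanical once the partition sets are listed, but \emph{selecting a word whose discrepancy survives}. For many natural length-six words — in particular every word in which each index occurs exactly twice — the relevant non-crossing pairing is crossing, or the three-level and amalgamated weightings happen to agree, so the putative counterexample collapses to an identity. One must therefore choose the index pattern and the generators so that the two weightings genuinely disagree, isolate the single residual term (coming from an $\mathcal{A}_3$-block that separates two $\{1,2\}$-runs in the full product but is inadmissible once those runs are amalgamated), and check that its coefficient is nonzero. I would also verify the bracketing conventions — that the inner order in $(\varphi_1\vProduct\varphi_2)\vProduct\varphi_3$ is $1<2$ and the outer order is $\alpha<3$, matching $1<2<3$ — so that the discrepancy cannot be attributed to an inconsistent choice of orders rather than to a true failure of associativity.
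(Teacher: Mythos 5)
Your argument has two genuine gaps, one logical and one factual. The logical gap: non-associativity of a product of states --- both in the sense of Proposition~\ref{proposition:notAssociative} and in the axiomatic frameworks the paper refers to --- is the assertion that the two iterated binary products $(\varphi_1\ovee\varphi_2)\ovee\varphi_3$ and $\varphi_1\ovee(\varphi_2\ovee\varphi_3)$ differ, and this is exactly what the paper's proof checks. You instead compare the ternary Fock-space product $\varphi_1\vProduct\varphi_2\vProduct\varphi_3$ with the single bracketing $(\varphi_1\vProduct\varphi_2)\vProduct\varphi_3$, resting on the claim that ``associativity would force these functionals to coincide.'' That implication is not a theorem: associativity of the binary operation says nothing a priori about the independently defined multi-variate product, so exhibiting a word on which the ternary product and one bracketing disagree does not show that the two bracketings disagree with each other. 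To prove the proposition you must either compare the two bracketings directly, or additionally prove that the ternary product coincides with one of them; you do neither.

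The factual gap: your premise that ``all mixed moments of order at most five agree with the corresponding free moments'' misreads Example~\ref{example:firstMixedMomentsForTwoAlgebras}. The two displayed order-five formulas there are precisely the moments that \emph{differ} from the free ones (the free value of $\varphi(a_1b_1a_2b_2a_3)$ contains the term $\varphi(a_1a_3)\varphi(a_2)\varphi(b_1b_2)$, whereas the V-monotone value contains $\varphi(a_1)\varphi(a_2)\varphi(a_3)\varphi(b_1b_2)$). Hence your conclusion that any witness must have length at least six is false, and it pushes you into a length-six computation that you never complete: the decisive step, that ``carrying out both sums produces two polynomials \ldots not identically equal,'' is asserted rather than proved, and your own ``main obstacle'' paragraph concedes that your chosen pattern $(1,3,1,2,3,2)$ may well collapse to an identity. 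The paper's counterexample shows how cheap this can be made: with $a_1,a_2\in\mathcal{A}_1$, $b\in\mathcal{A}_2$, $c_1,c_2\in\mathcal{A}_3$ and the length-five word $a_1c_1bc_2a_2$, both bracketings are computed from Example~\ref{example:firstMixedMomentsForTwoAlgebras} together with moments of order at most three, and their difference equals $\left[\varphi_1(a_1a_2)-\varphi_1(a_1)\varphi_1(a_2)\right]\varphi_2(b)\left[\varphi_3(c_1)\varphi_3(c_2)-\varphi_3(c_1c_2)\right]$, which is nonzero for generic choices.
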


\begin{proof}
Let $(\mathcal{A}_i, \varphi_i)$, for $i=1,2,3$, be $C^{*}$-probability spaces and let $a_1, a_2 \in \mathcal{A}_1$, $b \in \mathcal{A}_1$ and $c_1, c_2 \in \mathcal{A}_3$. Let $\phi_{12} = \varphi_1 \ovee \varphi_2$ and $\phi_{23} = \varphi_2 \ovee \varphi_3$. The formulas in Example~\ref{example:firstMixedMomentsForTwoAlgebras} yield
\begin{align*}
( & \phi_{12} \ovee \varphi_3)(a_1 c_1 b c_2 a_2) = \phi_{12}(a_1 b a_2) \varphi_3(c_1) \varphi_3(c_2) \\
& + \phi_{12}(a_1) \phi_{12}(b) \phi_{12}(a_2) \varphi_3(c_1 c_2) - \phi_{12}(a_1) \phi_{12}(b) \phi_{12}(a_2) \varphi_3(c_1) \varphi_3(c_2) \\
= & \ \varphi_1(a_1 a_2) \varphi_2(b) \varphi_3(c_1) \varphi_3(c_2) + \varphi_1(a_1) \varphi_1(a_2) \varphi_2(b) \varphi_3(c_1 c_2) \\
& - \varphi_1(a_1) \varphi_1(a_2) \varphi_2(b) \varphi_3(c_1) \varphi_3(c_2)
\end{align*}
and
\begin{multline*}
(\varphi_1 \ovee \phi_{23})(a_1 c_1 b c_2 a_2) = \varphi_1(a_1 a_2) \phi_{23}(c_1 b c_2) = \varphi_1(a_1 a_2) \varphi_2(b) \varphi_3(c_1 c_2) \text{.}
\end{multline*}
These two mixed moments are different, which completes the proof.
\end{proof}

\section{Central limit theorem}
\label{section:centralLimitTheorem}
In Sections~\ref{section:centralLimitTheorem} and~\ref{section:VmonotoneGaussianOperators}, we assume that $I = \mathbb{N}_{+}$ with the natural order. In this section, we state and prove the V-monotone central limit theorem. We express the limit moments in terms of combinatorial objects and we find a recurrence formula for the even moments (the odd moments vanish). Let $\mathcal{ONC}(n)$ be the set of all ordered partitions on $[n]$ which are non-crossing (see Definition~\ref{definition:orderedPartitions}).

\begin{definition}
Let $\mathcal{OV}(n)$ be the set of all $(\pi, \mathfrak{i}) \in \mathcal{ONC}(n)$ which are V-monotonically labeled. We also denote by $\mathcal{OV}^2(2n)$ the set of all $(\pi, \mathfrak{i}) \in \mathcal{OV}(2n)$ such that $\pi$ is a pair partition.
\end{definition}

\begin{example}
For $n<5$, we have $\mathcal{OV}(n) = \mathcal{ONC}(n)$. The set $\mathcal{ONC}(5) \setminus \mathcal{OV}(5)$ consists of two elements, shown in Fig.~\ref{figure:orderedNotVMonotonePartitions}.

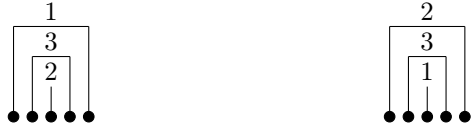
\begin{figure}[H]
\centering
\begin{tikzpicture}
	\tikzstyle{leg} = [circle, draw=black, fill=black!100, text=black!100, thin, inner sep=0pt, minimum size=4.0]

    \pgfmathsetmacro {\dx}{0.25}
    \pgfmathsetmacro {\dy}{0.40}

    \pgfmathsetmacro {\x}{0}
    \pgfmathsetmacro {\y}{0}

    \draw[-] (\x+0*\dx,\y+0*\dy) -- (\x+0*\dx,\y+3*\dy);
    \node () at (\x+0*\dx,\y+0*\dy) [leg] {.};
    \draw[-] (\x+4*\dx,\y+0*\dy) -- (\x+4*\dx,\y+3*\dy);
    \node () at (\x+4*\dx,\y+0*\dy) [leg] {.};
    \draw[-] (\x+0*\dx,\y+3*\dy) -- (\x+4*\dx,\y+3*\dy);
    \node () at (\x+2*\dx,\y+3*\dy) [label={[label distance = -2mm]above:\smaller{1}}] {};

    \draw[-] (\x+1*\dx,\y+0*\dy) -- (\x+1*\dx,\y+2*\dy);
    \node () at (\x+1*\dx,\y+0*\dy) [leg] {.};
    \draw[-] (\x+3*\dx,\y+0*\dy) -- (\x+3*\dx,\y+2*\dy);
    \node () at (\x+3*\dx,\y+0*\dy) [leg] {.};
    \draw[-] (\x+1*\dx,\y+2*\dy) -- (\x+3*\dx,\y+2*\dy);
    \node () at (\x+2*\dx,\y+2*\dy) [label={[label distance = -2mm]above:\smaller{3}}] {};
    
    \draw[-] (\x+2*\dx,\y+0*\dy) -- (\x+2*\dx,\y+1*\dy);
    \node () at (\x+2*\dx,\y+0*\dy) [leg] {.};
    \node () at (\x+2*\dx,\y+1*\dy) [label={[label distance = -2mm]above:\smaller{2}}] {};
	
    \pgfmathsetmacro {\x}{5}
    \pgfmathsetmacro {\y}{0}

    \draw[-] (\x+0*\dx,\y+0*\dy) -- (\x+0*\dx,\y+3*\dy);
    \node () at (\x+0*\dx,\y+0*\dy) [leg] {.};
    \draw[-] (\x+4*\dx,\y+0*\dy) -- (\x+4*\dx,\y+3*\dy);
    \node () at (\x+4*\dx,\y+0*\dy) [leg] {.};
    \draw[-] (\x+0*\dx,\y+3*\dy) -- (\x+4*\dx,\y+3*\dy);
    \node () at (\x+2*\dx,\y+3*\dy) [label={[label distance = -2mm]above:\smaller{2}}] {};

    \draw[-] (\x+1*\dx,\y+0*\dy) -- (\x+1*\dx,\y+2*\dy);
    \node () at (\x+1*\dx,\y+0*\dy) [leg] {.};
    \draw[-] (\x+3*\dx,\y+0*\dy) -- (\x+3*\dx,\y+2*\dy);
    \node () at (\x+3*\dx,\y+0*\dy) [leg] {.};
    \draw[-] (\x+1*\dx,\y+2*\dy) -- (\x+3*\dx,\y+2*\dy);
    \node () at (\x+2*\dx,\y+2*\dy) [label={[label distance = -2mm]above:\smaller{3}}] {};
    
    \draw[-] (\x+2*\dx,\y+0*\dy) -- (\x+2*\dx,\y+1*\dy);
    \node () at (\x+2*\dx,\y+0*\dy) [leg] {.};
    \node () at (\x+2*\dx,\y+1*\dy) [label={[label distance = -2mm]above:\smaller{1}}] {};
	
	\node () at (0, -0.25) {};
\end{tikzpicture}
\caption{Ordered non-crossing partitions.}
\label{figure:orderedNotVMonotonePartitions}
\end{figure}
\end{example}

\begin{theorem}
\label{thm:CLT}
Let $(\mathcal{A}, \varphi)$ be a $C^{*}$-probability space and let $(a_i)_{i=1}^{\infty}$ be a family of V-monotonically independent (with respect to $\varphi$) and identically distributed random variables with mean $0$ and variance $1$. Let
$$
S_N = \dfrac{1}{\sqrt{N}} \sum \limits_{i=1}^N a_i \text{.}
$$
Then, for all $n \in \mathbb{N}$, we have
$$
\lim \limits_{N \to \infty} \varphi(S_N^n) = 
\begin{cases}
\dfrac{ \card{ \mathcal{OV}^2(2k) } }{k!} & \text{if $n=2k$, $k \in \mathbb{N}$} \\
0 & \text{if $n$ is odd.}
\end{cases}
$$
The distribution given by the above moment sequence will be called the \emph{standard V-monotone Gaussian distribution}.
\end{theorem}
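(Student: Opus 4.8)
The plan is to expand $\varphi(S_N^n)$ by linearity, apply the combinatorial formula of Corollary~\ref{corollary:combFormula} to each mixed moment, and then carry out an asymptotic analysis of the resulting sum over V-monotonically labeled non-crossing partitions. First I would write
$$
\varphi(S_N^n) = N^{-n/2} \sum_{(i_1,\ldots,i_n) \in [N]^n} \varphi(a_{i_1} \ldots a_{i_n}) = N^{-n/2} \sum_{(i_1,\ldots,i_n) \in [N]^n} \sum_{(\pi,\mathfrak{i}) \in \mathcal{V}(i_1,\ldots,i_n)} \kappa_\pi^*(a_{i_1},\ldots,a_{i_n}),
$$
where each index $i_k \in I = \mathbb{N}_{+}$ plays the dual role of selecting a random variable and providing a label. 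The crucial reorganization is that running over all tuples $(i_1,\ldots,i_n) \in [N]^n$ and, for each, over $\mathcal{V}(i_1,\ldots,i_n)$, is the same as running over all V-monotonically labeled non-crossing partitions $(\pi,\mathfrak{i})$ with $\mathfrak{i} \colon \pi \mapsto [N]$, because adaptedness forces the label of a block to equal the common index of its legs, making this correspondence a bijection.

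Next I would invoke the identical distribution of the $a_i$. Since every block $B = \{p_1 < \ldots < p_q\}$ carries a single index, $\kappa_q^*(a_{i_{p_1}},\ldots,a_{i_{p_q}})$ depends only on $q = \card{B}$ and equals a fixed finite constant $\kappa_q^*$ (finiteness being guaranteed by the $C^{*}$-structure), with $\kappa_1^* = \varphi(a) = 0$ and $\kappa_2^* = \varphi(a^2) = 1$. Hence $\kappa_\pi^*(a_{i_1},\ldots,a_{i_n}) = \prod_{B \in \pi} \kappa_{\card{B}}^*$ is independent of $\mathfrak{i}$, and the computation collapses to
$$
\varphi(S_N^n) = N^{-n/2} \sum_{\pi \in \noncr(n)} \Big( \prod_{B \in \pi} \kappa_{\card{B}}^* \Big) \, M_N(\pi),
$$
where $M_N(\pi)$ counts the V-monotone labelings $\mathfrak{i} \colon \pi \mapsto [N]$. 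The vanishing of $\kappa_1^*$ discards every $\pi$ with a singleton block, so only partitions all of whose blocks have size at least two survive.

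The heart of the argument is the asymptotics of $M_N(\pi)$. I would split labelings according to the number $d$ of distinct values used: an injective labeling amounts to choosing $d = \card{\pi}$ values from $[N]$ and then a V-monotone \emph{bijection} onto them, the latter count depending only on the relative order of the chosen values; thus the injective contribution is $\binom{N}{\card{\pi}}$ times the number $c(\pi)$ of V-monotone orderings of $\pi$, which is asymptotic to $N^{\card{\pi}} c(\pi)/\card{\pi}!$. Labelings using fewer than $\card{\pi}$ distinct values contribute only $O(N^{\card{\pi}-1})$ and are negligible. Incorporating the prefactor yields a power $N^{\card{\pi}-n/2}$ per partition; since each surviving block has size at least two, $\card{\pi} \le n/2$, with equality exactly when $\pi$ is a pair partition (forcing $n$ even). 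Hence for odd $n$ every exponent is strictly negative and the limit is $0$, while for $n = 2k$ only the non-crossing pair partitions contribute, each with $\prod_{B} \kappa_{\card{B}}^* = (\kappa_2^*)^k = 1$. Summing $c(\pi)$ over all $\pi \in \noncrk{2}(2k)$ gives precisely $\card{\mathcal{OV}^2(2k)}$, producing the limit $\card{\mathcal{OV}^2(2k)}/k!$.

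The main obstacle I anticipate is the careful bookkeeping behind $M_N(\pi)$: one must verify that the V-monotone constraint (Definition~\ref{definition:VmonotoneLabelling}), which refers only to chains of nearest-outer blocks, is invariant under relabeling by an order-isomorphism, so that the injective count factors as $\binom{N}{\card{\pi}} c(\pi)$; and one must confirm that identifying two block labels genuinely lowers the power of $N$ rather than being offset by additional admissible configurations. Once this count and the finiteness of the $\kappa_q^*$ are in hand, the reorganization of the double sum, the reduction to blocks of size at least two, and the final power-counting are routine.
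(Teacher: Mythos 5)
Your proposal is correct, and it rests on the same pillars as the paper's proof: the expansion of $\varphi(S_N^n)$ via Corollary~\ref{corollary:combFormula}, the vanishing $\kappa_1^*=0$ and normalization $\kappa_2^*=1$, power counting to isolate pair partitions, and the fact that membership in $I_r$ depends only on the relative order of the labels. Where you differ is in how the double sum is decomposed. The paper keeps the outer sum over index tuples $(i_1,\ldots,i_n)\in[N]^n$: it first discards tuples containing an index of multiplicity one, shows separately that tuples with some index of multiplicity at least three number only $O(N^{(n-1)/2})$, and then partitions the remaining ``pair tuples'' into order-isomorphism equivalence classes, each of size $\binom{N}{k}$, whose quotient is identified with the set of ordered pair partitions; the inner sums are constant on classes, which yields $N^{-k}\binom{N}{k}\card{\mathcal{OV}^2(2k)}+O(N^{-1/2})$. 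You instead swap the order of summation, using the bijection between pairs (tuple, adapted V-monotone labeled partition) and V-monotone labelings $\mathfrak{i}\colon\pi\mapsto[N]$, and then do the asymptotics per partition: $M_N(\pi)=\binom{N}{\card{\pi}}c(\pi)+O(N^{\card{\pi}-1})$, where $c(\pi)$ counts V-monotone bijective labelings, so that $\sum_{\pi\in\noncrk{2}(2k)}c(\pi)=\card{\mathcal{OV}^2(2k)}$ gives the limit. Your organization buys a uniform treatment of all degenerate configurations (higher multiplicities and label coincidences are absorbed into the single bound on non-injective labelings, with no separate counting argument needed), and it makes the reduction to pair partitions a pure power-counting statement $\card{\pi}\le n/2$; the paper's organization keeps the sum over concrete tuples, which is closer to the classical moment-method template and makes the $\binom{N}{k}$ factor appear directly. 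The two points you flag as needing care — invariance of the V-monotone condition under order isomorphisms of the label set, and negligibility of non-injective labelings — are both genuine requirements, and both resolve exactly as you anticipate (the former because $I_r$ is defined by strict inequalities only, the latter because the image has size at most $\card{\pi}-1$ and $\kappa^*_\pi$ is a fixed finite constant), so there is no gap.
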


\begin{proof}
Let $(\mu_n)_{n=0}^{\infty}$ be the moment sequence of the random variable $a_i$, $i \in \mathbb{N}_{+}$ and let $\kappa^{*}_n = \kappa_{n}^*(a_i,\ldots,a_i)$, $n \in \mathbb{N}_{+}$ (it does not depend on $i$ since the variables are identically distributed). Of course, we have $\kappa^{*}_1 = 0$, $\kappa^{*}_2 = 1$, and $\kappa^{*}_n$ depends only on $\mu_1, \ldots, \mu_n$. Fix $N \in \mathbb{N}_{+}$ and $n \in \mathbb{N}$. Corollary~\ref{corollary:combFormula} implies that
\begin{align*}
\varphi(S_N^n) 
& = N^{-n/2} \sum \limits_{(i_1, \ldots, i_n) \in [N]^n} \varphi(a_{i_1} \ldots a_{i_n}) \\
& = N^{-n/2} \sum \limits_{(i_1, \ldots, i_n) \in [N]^n} \sum \limits_{(\pi, \mathfrak{i}) \in \mathcal{V}(i_1, \ldots, i_n)} \kappa^{*}_{\pi}(a_{i_1}, \ldots, a_{i_n}) \text{.}
\end{align*}
We have $\kappa^{*}_{r}(a_{i_{p_1}}, \ldots, a_{i_{p_r}}) = \kappa^{*}_r$, since $i_{p_1} = \ldots = i_{p_r}$ in each factor in the above product, and therefore, we denote
$$
\kappa^{*}_{\pi} \coloneqq \kappa_{\pi}^*(a_{i_1}, \ldots, a_{i_n}) = \prod \limits_{ B \in \pi } \kappa^{*}_{ \card{B} } \text{,}
$$
and thus
$$
\varphi(S_N^n) = N^{-n/2} \sum \limits_{(i_1, \ldots, i_n) \in [N]^n} \sum \limits_{(\pi, \mathfrak{i}) \in \mathcal{V}(i_1, \ldots, i_n)} \kappa^{*}_{\pi} \text{.}
$$
If some index $i \in [N]$ appears in $(i_1, \ldots, i_n)$ exactly once, then for any $(\pi, \mathfrak{i}) \in \mathcal{V}(i_1, \ldots, i_n)$ we have $\mu_{\pi} = 0$, because $\pi$ must have a singleton block labeled with $i$ and $\kappa^{*}_1 = 0$. There are at most $O(N^{(n-1)/2})$ sequences $(i_1, \ldots, i_n) \in [N]^n$ in which there are no singleton indices and which have at least one index $i \in \{ i_1, \ldots, i_n \}$ which appears in $(i_1, \ldots, i_n)$ at least three times. Indeed, we first choose a partition $\pi \in \mathcal{P}(n)$ which has at least one block with at least three legs and the remaining blocks have at least two legs, choosen in $O(1)$ ways. Clearly, it has at most $1+\tfrac{n-3}{2} = \tfrac{n-1}{2}$ blocks. We now choose labels of these blocks in at most $N^{(n-1)/2}$ ways. This labeled partition determines the sequence $(i_1, \ldots, i_n)$ which has the desired property.

Let $\mathcal{I}_n$ be the set of all sequences $(i_1, \ldots, i_n) \in [N]^n$ in which each index appears exactly twice. Then
$$
\varphi(S_N^n) = N^{-n/2} \sum \limits_{(i_1, \ldots, i_n) \in \mathcal{I}_n} \sum \limits_{(\pi, \mathfrak{i}) \in \mathcal{V}(i_1, \ldots, i_n)} \kappa^{*}_{\pi} + O \left( \tfrac{1}{\sqrt{N}} \right) \text{.}
$$
If $n$ is odd, then of course
$$
\lim \limits_{N \to \infty} \varphi(S_N^n) = 0 \text{.}
$$
Let us introduce an equivalence relation on $\mathcal{I}_{2n}$. Namely, two sequences of numbers $(i_1, \ldots, i_{2n})$ and $(j_1, \ldots, j_{2n})$ are equivalent if for any $p, q \in [2n]$, the following implications hold:
\begin{itemize}
	\item[ (i)] $i_p = i_q \Rightarrow j_p = j_q$
	\item[(ii)] $i_p < i_q \Rightarrow j_p < j_q$.
\end{itemize}
Each equivalence class has ${N \choose n}$ elements and the quotient set $\mathcal{J}_{2n}$ has the same cardinality as $\mathcal{OP}^2(2n)$. We construct a bijection between $\mathcal{J}_{2n}$ and $\mathcal{OP}^2(2n)$ as follows: we assign to the equivalence class of $(i_1, \ldots, i_{2n})$ the ordered partition $(\pi, \mathfrak{i})$ such that $\pi$ satisfies the condition: $k, l \in [2n]$ are in one block $B \in \pi$ if and only if $i_k = i_l$, then we put $\mathfrak{i}(B) = r$ such that $i_k$ is the $r$-th element in $\{ i_1, \ldots, i_{2n} \}$ with respect to the natural order. It is clear that for two equivalent sequences the respective ordered partitions are the same. The reader can check that this is indeed a bijection.

Observe that if $(i_1, \ldots, i_{2n})$ and $(j_1, \ldots, j_{2n})$ are equivalent, then for any partition $\pi \in \mathcal{P}(2n)$ there exists a unique labeling $\mathfrak{i}$ such that $(i_1, \ldots, i_{2n})$ is adapted to $(\pi, \mathfrak{i})$ if and only if there exists a unique labeling $\mathfrak{j}$ such that $(\pi, \mathfrak{j})$ is adapted to $(j_1, \ldots, j_{2n})$. Moreover, $(\pi, \mathfrak{i}) \in \mathcal{V}(i_1, \ldots, i_{2n})$ if and only if $(\pi, \mathfrak{j}) \in \mathcal{V}(j_1, \ldots, j_{2n})$. Therefore,
\begin{align*}
\varphi(S_N^{2n}) 
& = N^{-n} \sum \limits_{(i_1, \ldots, i_n) \in \mathcal{J}_{n}} {N \choose n} \sum \limits_{(\pi, \mathfrak{i}) \in \mathcal{V}(i_1, \ldots, i_n)} \kappa^{*}_{\pi} + O(N^{-1/2}) \\
& = N^{-n} {N \choose n} \sum \limits_{(\pi, \mathfrak{i}) \in \mathcal{OV}^2(2n)} \kappa^{*}_{\pi} + O(N^{-1/2}) \\
& = N^{-n} {N \choose n} \card{ \mathcal{OV}^2(2n) } + O(N^{-1/2}) \text{,}
\end{align*}
since $\kappa^{*}_{\pi} = 1$ for any pair partition. Letting $N \to \infty$, we obtain the desired formula.
\end{proof}

Now, we find a recurrence for the cardinalities of $\mathcal{OV}^2(2n)$. For that purpose, we need some auxiliary class of labeled partitions which admits one additional label, for convenience chosen to be a fraction. For each $k \in [n+1]$ we denote by $\mathcal{OV}_k^2(2n)$ the set of all $(\pi, \mathfrak{i}) \in \mathcal{OV}^2(2n)$ such that if we add the block $\{ 0,2n+1 \}$ to $\pi$, labeled with $k-\tfrac{1}{2}$, the partition remains V-monotonically labeled. By $N_{n,k}$ we denote its cardinality and we put $N_{0,1} = 1$. Clearly, $N_{n,n+1}$ and $N_{n,1}$ are the numbers of all ordered pair partitions on $[2n]$ which are V-monotonically and monotonically labeled (i.e. if $B'$ is inner with respect to $B$, then its label is greater than the label of $B$), respectively.

\begin{lemma}
\label{lemma:discreteRecurrenceCLT}
For any natural numbers $n$ and $k \in [n+2]$, we have
$$
N_{n+1, k} = \sum \limits_{m=0}^n \sum \limits_{l = L_1(k,m)}^{L_2(k,m)} { {k-1} \choose {l} }{ {n+2-k} \choose {m+1-l} } \left[ \delta_{l,0} N_{m,1} + \sum \limits_{r=1}^l N_{m,r} \right] N_{n-m,k-l} \text{,}
$$
where $L_1(k,m) = \max(0, (m+k)-(n+1))$, $L_2(k,m) = \min(k-1,m+1)$ and $\delta_{l,0}$ is the Kronecker delta. 
\end{lemma}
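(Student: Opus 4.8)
The plan is to set up a decomposition of $\mathcal{OV}_k^2(2n+2)$ according to the block $B^{*}$ containing the leg $1$. Since $\pi$ is a non-crossing pair partition, $B^{*} = \{1, 2m+2\}$ for a unique $m \in \{0, \ldots, n\}$; the legs $\{2, \ldots, 2m+1\}$ then form $m$ pairs nested inside $B^{*}$, while $\{2m+3, \ldots, 2n+2\}$ form $n-m$ pairs lying to the right of $B^{*}$, all of which are, like $B^{*}$, outer blocks of $\pi$. Summing over $m$ produces the outer sum $\sum_{m=0}^n$. I would first record that, because $\pi$ is non-crossing, the inner region and the right region are each closed under the partition and share no block, so they can be analysed separately once the labels have been distributed.

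For the labels I would split $[n+1]$ at the threshold $k$ into the $k-1$ values below the virtual value $k - \tfrac12$ and the $n+2-k$ values at or above it. The group consisting of the inner $m$ pairs together with $B^{*}$ receives $m+1$ labels, of which $l$ are taken from the lower block and $m+1-l$ from the upper block; this gives the factor $\binom{k-1}{l}\binom{n+2-k}{m+1-l}$, and the range $L_1(k,m) \le l \le L_2(k,m)$ is exactly the range in which both binomials are nonzero. Because $\mathcal{OV}^2(\cdot)$ is defined through an \emph{ordered} (order-type) labeling, once the label sets are chosen I may pass to the order-isomorphic relabelings of the inner nest and of the right region, so their contributions depend only on the sizes and on the relative position of the threshold.

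The heart of the argument is decoupling the V-monotone condition for the partition augmented by the virtual block $\{0, 2n+3\}$ of value $k - \tfrac12$. Reading a nesting chain from inner to outer, the labels form an element of some $I_r$, and appending the half-integer $w = k - \tfrac12$ at the outer end preserves this shape precisely when either $w > \mathfrak{i}(B^{*})$ (automatic, which I call Case A) or every chain through $B^{*}$ is strictly decreasing towards $B^{*}$ and $w < \mathfrak{i}(B^{*})$ (Case B). Since the right region shares no chain with $B^{*}$, its constraint is simply that it be V-monotone with respect to its own virtual block; relabeling shifts the threshold down by the $l$ lower labels already used, yielding the factor $N_{n-m,k-l}$. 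For the inner nest, $B^{*}$ plays the role of the virtual outer block: in Case A the label $\mathfrak{i}(B^{*})$ is one of the $l$ lower labels, and as it occupies interior rank $r \in \{1, \ldots, l\}$ the inner nest ranges over $\mathcal{OV}_r^2(2m)$, contributing $\sum_{r=1}^l N_{m,r}$; in Case B the chain condition forces $B^{*}$ to be minimal in its nest and at least $k$, hence $l = 0$ and a monotonically labeled interior, contributing $\delta_{l,0}N_{m,1}$. Assembling the three factors reproduces the claimed formula.

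The main obstacle I expect is the rigorous justification of this decoupling: one must check that V-monotonicity of the augmented partition is \emph{equivalent} to the conjunction of the independent conditions on the inner nest (with $B^{*}$ as its virtual outer) and on the right region (with the genuine virtual block), and that these conditions translate under relabeling into membership in the asserted $\mathcal{OV}_r^2$ classes. The delicate point within this is the Case A / Case B dichotomy, in particular verifying that Case B can occur only when $l = 0$, so that no lower label lies inside $B^{*}$; this is exactly what produces the bracket $\delta_{l,0}N_{m,1} + \sum_{r=1}^l N_{m,r}$ rather than a single sum.
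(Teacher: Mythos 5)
Your proposal is correct and follows essentially the same route as the paper's proof: decompose according to the block $B^{*}=\{1,2m+2\}$, distribute the labels below and above the threshold $k-\tfrac{1}{2}$ between the inner nest and the right region (yielding the binomials $\binom{k-1}{l}\binom{n+2-k}{m+1-l}$ and the stated range of $l$), decouple the augmented V-monotonicity condition into the two regions, and count the inner nest by the rank $r$ of $\mathfrak{i}(B^{*})$ — your Case A / Case B dichotomy is exactly the paper's split into $r\in[l]$ (giving $\sum_{r=1}^{l}N_{m,r}$) versus the forced $l=0$, $r=1$ monotone case (giving $\delta_{l,0}N_{m,1}$) — with the right region contributing $N_{n-m,k-l}$ via the shifted threshold. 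The only blemish is your unused side remark that the right-region pairs are all outer blocks of $\pi$ (false when $\pi''$ has nested pairs), but nothing in the argument depends on it.
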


\begin{proof}
Fix $n$ and $m \in [0,n]$. Let us consider two disjoint subsets $C, D \subseteq [n+1]$ of the form $\{ i_1, \ldots, i_{m+1} \}$ and $\{ j_1, \ldots, j_{n-m} \}$, respectively, satisfying $C \cup D = [n+1]$ and such that for some non-negative integers $l, l'$ we have
\begin{equation}
\label{eq:labelsInDiscreteRecurrence}
\begin{gathered}
i_1 < \ldots < i_l < k - \tfrac{1}{2} < i_{l+1} < \ldots < i_{m+1} \text{,} \\
j_1 < \ldots < j_{l'} < k - \tfrac{1}{2} < j_{l'+1} < \ldots < j_{n-m} \text{.}
\end{gathered}
\end{equation}
Note that the following conditions hold:
\begin{itemize}
	\item $l + l' = k-1$,
	\item $0 \leq l \leq k-1$,
	\item $0 \leq l' \leq k-1$,
	\item $0 \leq m+1-l \leq n-k+1$,
	\item $0 \leq n-m-l' \leq n-k+1$.
\end{itemize}
The reader can check that these conditions imply $L_1(k,m) \leq l \leq L_2(k,m)$. 

We count in how many ways we can choose $(\pi, \mathfrak{i}) \in \mathcal{OV}^2_k(2n+2)$ for $k \in [n+2]$ (a partition $\pi$ has the form $\pairPartitionRecurrence{\pi'}{\pi''}$ for some non-crossing pair partitions $\pi'$ and $\pi''$). First, we choose $m \in \{ 0, \ldots, n \}$ such that $B = \{1,2m+2\} \in \pi$. Since the fact that $\pi$ is $V$-monotonically labeled is equivalent to the same fact for $\pi' \cup \{ B \}$ and $\pi''$ simultaneously, we shall deal with these two sets of blocks separately. 

Now, we choose $l$ such that $L_1(k,m) \leq l \leq L_2(k,m)$ and labels $i_1 < \ldots < i_l$ from $[k-1]$ (in one of ${ {k-1} \choose {l} }$ ways) and, independently, $i_{l+1} < \ldots < i_{m+1} \in [k,n+1]$ (in one of ${ {n+2-k} \choose {m+1-l} }$ ways). The labels $i_1, \ldots, i_{m+1}$ will be later assigned to blocks of $\pi' \cup \{ B \}$ and the remaining labels $j_1 < \ldots < j_{n-m} \in [n+1]$ will be assigned to blocks of $\pi''$. Moreover, the condition~\eqref{eq:labelsInDiscreteRecurrence} is fulfilled (with $l' = k-1-l$). We emphasize that the partitions $\pi'$ and $\pi''$ have not been chosen yet. At this point we have only determined how many labels of types $i$ and $j$ are less than $k-\tfrac{1}{2}$, respectively.

Next, we choose $r \in [m+1]$ such that $\mathfrak{i}(B) = i_r$. There are two possibilities. If  $l=0$, then $i_r > k-\tfrac{1}{2}$, and hence $\pi'$ must be labeled monotonically. This implies $r=1$. In that case, there are $N_{m,1}$ possibilities of constructing $\pi'$ and labeling its blocks with labels of type $i$. If $l > 0$, then we cannot choose $r > l$, because in that case $\pi'$ would not be labeled V-monotonically (because a local maximum occurs in the index sequence $(k-\tfrac{1}{2}, i_r, i_1)$, which corresponds to the chain of blocks $\{ \{0, 2n+3 \} < B < B' \}$, where $B' \in \pi'$ satisfies $\mathfrak{i}(B') = i_1$). Thus, we take $r \in [l]$ and a partition $\pi'$ labeled with $i_1, \ldots, i_{r-1}, i_{r+1}, \ldots, i_{m+1}$. There are $N_{m,r}$ ways of doing it (observe that this number is independent of the exact values of $i$'s; only the value of $m$ and the position of $\mathfrak{i}(B)$ in the sequence $(i_1, \ldots, i_{m+1})$ are relevant here). Finally, we choose a partition $\pi''$ which is V-monotonically labeled with $j$'s. Since $k - \tfrac{1}{2}$ is the $(k-l)$-th element in the sequence $(j_1, \ldots, j_{l'}, k - \tfrac{1}{2}, j_{l'}, \ldots, j_{n-m})$, there are $N_{n-m,k-l}$ possibilities of doing it. This completes the proof.
\end{proof}

\begin{example}
Below, we give even moments of the lowest order obtained in the V-monotone central limit theorem (second row) and, for comparison, the moments of the standard arcisne law (third row). In the first row we put the order of the moment.
\begin{center}
\renewcommand{\arraystretch}{1.5}
\begin{tabular}{ |c|c|c|c|c|c|c|c|c|c| } 
 \hline
 $2$ & $4$ & $6$ & $8$ & $10$ & $12$ & $14$ & $16$ & $18$ & $20$ \\ 
 \hline
 $\tfrac{1}{1!}$ & $\tfrac{4}{2!}$  & $\tfrac{28}{3!}$ & $\tfrac{278}{4!}$ & $\tfrac{3 \, 564}{5!}$ & $\tfrac{55 \, 928}{6!}$ & $\tfrac{1 \, 037 \, 708}{7!}$ & $\tfrac{22 \, 217 \, 720}{8!}$ & $\tfrac{539 \, 070 \, 560}{9!}$ & $\tfrac{1 \, 731 \, 430 \, 024}{10!}$ \\ 
 \hline
 $\tfrac{1}{1!}$ & $\tfrac{3}{2!}$  & $\tfrac{15}{3!}$ & $\tfrac{105}{4!}$ & $\tfrac{945}{5!}$ & $\tfrac{10 \, 395}{6!}$ & $\tfrac{135 \, 135}{7!}$ & $\tfrac{2 \, 027 \, 025}{8!}$ & $\tfrac{34 \, 459 \, 425}{9!}$ & $\tfrac{654 \, 729 \, 075}{10!}$ \\ 
 \hline
\end{tabular}
\renewcommand{\arraystretch}{1}
\end{center}
The sequence of numerators in the second row cannot be found in \emph{On-line Encyclopedia of Integer Sequences}${}^\circledR$ (\texttt{https://oeis.org/}).
\end{example}

\section{V-monotone Gaussian operators}
\label{section:VmonotoneGaussianOperators}
It is difficult to obtain the moment generating function for the central limit distribution from Lemma~\ref{lemma:discreteRecurrenceCLT}. Below, we present another approach based on constructing an operator which has the standard V-monotone Gaussian distribution. 

\begin{definition}
For $1 \leq m \leq n$ let $\mathcal{X}_{n,m} \coloneqq \{ (x_1, \ldots, x_n) \in \mathbb{R}^n: x_1 > \ldots > x_m < \ldots < x_n \}$ (we put $\mathcal{X}_{1,1} = \mathbb{R}$) and $\mathcal{X}_n \coloneqq \bigcup \limits_{m=1}^n \mathcal{X}_{n,m}$. Let $\mathcal{H}_0 \coloneqq \mathbb{C} \, \Lambda$, $\mathcal{H}_n \coloneqq \text{L}^2(\mathcal{X}_n, \diff x_1 \ldots \diff x_n)$, where $\Lambda$ is a unit vector and $\diff x_1 \ldots \diff x_n$ is the $n$-dimensional Lebesgue measure. Let us define the \emph{continuous V-monotone Fock space} by 
$$
\mathcal{CV} = \bigoplus \limits_{n=0}^{\infty} \mathcal{H}_n 
$$
with the canonical scalar product, namely
$$
\langle h_1, h_2 \rangle \coloneqq \int \limits_{\mathcal{X}_n} h_1(x_1, \ldots, x_n) \overline{h_2(x_1, \ldots, x_n)} \,  \diff x_1 \ldots \diff x_n \text{,}
$$
where $h_1, h_2 \in \mathcal{H}_n$. Let $\phi$ be the vector state associated with $\Lambda$.
\end{definition}

Now, we introduce the creation and annihilation operators on the Fock space.

\begin{definition}
Let $\mathbbm{1}_n, \mathbbm{1}_{n,m} \colon \mathbb{R}^n \mapsto \{ 0, 1 \}$ be the characteristic funtions of $\mathcal{X}_{n}$ and $\mathcal{X}_{n,m}$, respectively, and let $f \in \mathcal{H}_1$. We define the creation operator associated with $f$ by the continuous linear extension of
\begin{equation*}
\begin{cases}
(a(f) \, \Lambda)(x) = f(x) \\
(a(f) \, g)(x, x_1, \ldots, x_n) = \left[ f(x) \mathbbm{1}_{n+1}(x, x_1, \ldots, x_n) \right] g(x_1, \ldots, x_n) \text{,}
\end{cases}
\end{equation*}
where $g \in \mathcal{H}_n$ for $n>0$. 

\noindent The annihilation operator associated with $f$ is given by by the continuous linear extension of $a^*(f) \, \Lambda = 0$ and
\begin{equation*}
\begin{cases}
(a^*(f) \, g) = \int \limits_{- \infty}^{\infty} \diff x \, \overline{f(x)} g(x) \Lambda & \text{for $n=0$} \\
(a^*(f) \, g)(\vec{x}) = \left[ \mathbbm{1}_{n,1}(\vec{x}) \int \limits_{- \infty}^{x_1} \diff x + \int \limits_{x_1}^{\infty} \diff x \right] \, \overline{f(x)} g(x, \vec{x}) & \text{for $n>0$}
\end{cases} \text{,}
\end{equation*}
where $g \in \mathcal{H}_{n+1}$ and $\vec{x} = (x_1, \ldots, x_n)$.
\end{definition}

\begin{remark}
Let $\mathbbm{1}_{<}, \mathbbm{1}_{>} \colon \mathbb{R}^2 \mapsto \{ 0, 1 \}$ be defined by
$$
\mathbbm{1}_{<} \coloneqq \mathbbm{1}_{ \{ (u,v) \in \mathbb{R}^2: u < v \} } \qquad \text{and} \qquad \mathbbm{1}_{>}(x, y) \coloneqq \mathbbm{1}_{<}(y, x) \text{.}
$$
We have 
\begin{equation}
\label{eq:recurrenceForCharacteristicFunction}
\mathbbm{1}_{n+1}(x, \vec{x}) = \mathbbm{1}_{<}(x, x_1) \mathbbm{1}_{n,1}(\vec{x}) + \mathbbm{1}_{>}(x, x_1) \mathbbm{1}_n(\vec{x}) \text{.}
\end{equation}
\end{remark}

\begin{proposition}
An operator $a^*(f)$ is the adjoint of $a(f)$.
\end{proposition}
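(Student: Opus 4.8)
The plan is to verify the adjoint relation $\langle a(f) g, h\rangle = \langle g, a^*(f) h\rangle$ directly, working on the dense domain of finite-particle vectors (finite linear combinations of homogeneous elements of $\mathcal{CV}$). Since $a(f)$ raises the particle number by one and $a^*(f)$ lowers it by one, both sides of the desired identity vanish unless $g$ and $h$ lie in adjacent levels; it therefore suffices to treat homogeneous $g \in \mathcal{H}_n$ and $h \in \mathcal{H}_{n+1}$ for each $n \geq 0$ separately. To control the Fubini interchange that appears below, I would first establish the identity for $f, g, h$ in a convenient dense class (say continuous with compact support) making all integrands absolutely integrable, and then extend by continuity, using that $a(f)$ and $a^*(f)$ map such vectors back into $L^2$.

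For the base case $n=0$ I would evaluate both sides on $\Lambda$. We have $(a(f)\Lambda)(x) = f(x)$, so $\langle a(f)\Lambda, h\rangle = \int_{\mathbb{R}} f(x)\overline{h(x)}\,dx$ because $\mathcal{X}_1 = \mathbb{R}$; on the other hand $a^*(f) h = \bigl(\int_{\mathbb{R}} \overline{f(x)} h(x)\,dx\bigr)\Lambda$, whence $\langle \Lambda, a^*(f) h\rangle$ equals the complex conjugate of that scalar, which is again $\int_{\mathbb{R}} f(x)\overline{h(x)}\,dx$. The complementary boundary identity $\langle a(f) g, \Lambda\rangle = \langle g, a^*(f)\Lambda\rangle$ is trivial, both sides being $0$ since $a^*(f)\Lambda = 0$ while $a(f) g \perp \Lambda$.

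For $n \geq 1$ the entire content is carried by the characteristic-function recurrence~\eqref{eq:recurrenceForCharacteristicFunction}. Writing the inner product on $\mathcal{H}_{n+1}$ as an integral over $\mathbb{R}^{n+1}$ against $\mathbbm{1}_{n+1}$, I would substitute $(a(f)g)(x,\vec{x}) = f(x)\,\mathbbm{1}_{n+1}(x,\vec{x})\,g(\vec{x})$ and replace $\mathbbm{1}_{n+1}(x,\vec{x})$ by $\mathbbm{1}_{<}(x,x_1)\mathbbm{1}_{n,1}(\vec{x}) + \mathbbm{1}_{>}(x,x_1)\mathbbm{1}_n(\vec{x})$. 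Integrating the variable $x$ first, the factor $\mathbbm{1}_{<}(x,x_1)$ turns the inner integral into $\int_{-\infty}^{x_1}$ and $\mathbbm{1}_{>}(x,x_1)$ turns it into $\int_{x_1}^{\infty}$, so that
$$
\langle a(f) g, h\rangle = \int_{\mathcal{X}_n} g(\vec{x})\left[\mathbbm{1}_{n,1}(\vec{x})\int_{-\infty}^{x_1} f(x)\overline{h(x,\vec{x})}\,dx + \int_{x_1}^{\infty} f(x)\overline{h(x,\vec{x})}\,dx\right]d\vec{x},
$$
where I also use that $g$ is supported on $\mathcal{X}_n \supseteq \mathcal{X}_{n,1}$ to restrict the outer domain. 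Recognizing the bracket as $\overline{(a^*(f)h)(\vec{x})}$ — the conjugation merely flips $\overline{f}\,h$ into $f\,\overline{h}$ — identifies the right-hand side with $\langle g, a^*(f) h\rangle$.

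The only real obstacle is bookkeeping: one must keep the asymmetric factor $\mathbbm{1}_{n,1}$ attached to the branch $x < x_1$ (and absent from the branch $x > x_1$), which is precisely how it enters the definition of $a^*(f)$, and one must justify the interchange of the order of integration. Both points are settled by the recurrence~\eqref{eq:recurrenceForCharacteristicFunction} together with the preliminary reduction to an absolutely integrable dense class, and I expect no analytic difficulty beyond this.
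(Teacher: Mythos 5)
Your proof is correct and takes essentially the same approach as the paper's: both expand the inner product as an integral over $\mathbb{R}^{n+1}$, apply the indicator recurrence~\eqref{eq:recurrenceForCharacteristicFunction} together with Fubini's theorem, and integrate out the first variable to recognize the definition of $a^*(f)$. The differences are only cosmetic --- you verify the conjugate pairing $\langle a(f)\,g, h\rangle = \langle g, a^*(f)\,h\rangle$ instead of $\langle g, a(f)\,h\rangle = \langle a^*(f)\,g, h\rangle$, and you write out the $n=0$ boundary cases that the paper explicitly omits.
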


\begin{proof}
Let $g \in \mathcal{H}_{n+1}$ and $h \in \mathcal{H}_n$ for some $n > 0$. Then, by~\eqref{eq:recurrenceForCharacteristicFunction} and Fubini's theorem, we get
\begin{multline*}
\langle g, a(f) \, h \rangle = \int \limits_{\mathbb{R}^{n+1}} \diff x \diff x_1 \ldots \diff x_n \, \mathbbm{1}_{n+1} (x, \vec{x}) g(x, \vec{x}) \overline{f(x) h(\vec{x})} \\
= \int \limits_{\mathbb{R}^n} \diff x_1 \ldots \diff x_n \, \left[ \mathbbm{1}_{n,1}(\vec{x}) \int \limits_{- \infty}^{x_1} \diff x + \mathbbm{1}_{n}(\vec{x}) \int \limits_{x_1}^{\infty} \diff x \right] \, \overline{f(x)} g(x, \vec{x}) \overline{h(\vec{x})} \\
= \int \limits_{\mathcal{X}_n} \diff x_1 \ldots \diff x_n \, \left[ \mathbbm{1}_{n,1}(\vec{x}) \int \limits_{- \infty}^{x_1} \diff x + \int \limits_{x_1}^{\infty} \diff x \right] \, \overline{f(x)} g(x, \vec{x}) \overline{h(\vec{x})} = \langle a^*(f) \, g, h \rangle \text{,}
\end{multline*}
where $\vec{x} = (x_1, \ldots, x_n)$. The proof for $g \in \mathcal{H}_1$ and $g = \Lambda$ is omitted.
\end{proof}

\begin{definition}
By the \emph{V-monotone Gaussian operator} associated with $f \in \mathcal{H}_1$ we mean the operator of the form
$$
\omega(f) = a(f) + a^*(f) \text{.}
$$
If $f = \mathbbm{1}_{[0,1]}$, this operator will be called \emph{standard} and denoted by $\omega$. By $a$ and $a^*$ we denote the associated creation and annihilation operators, respectively. 
\end{definition}

The moment sequence for $\omega$ with respect to $\phi$ happens to agree with the moment sequence of the standard V-monotone Gaussian distribution. In order to show this fact, we need a discrete version of the V-monotone Fock space and operators defined on it.

\begin{definition}
Let $\{ \Omega \} \cup \{ e_i: i \in I \}$ be an orthonormal basis of some Hilbert space. Let us define the \emph{discrete V-monotone Fock space} by
$$
\mathcal{DV} = \mathbb{C} \, \Omega \oplus \bigoplus \limits_{n=1}^{\infty} \bigoplus \limits_{(i_1, \ldots, i_n) \in I_n} \mathbb{C} \, e_{i_1} \otimes \ldots \otimes e_{i_n} \text{,}
$$
with the canonical scalar product and the vector state $\varphi$ associated with $\Omega$.

We define creation and annihilation operators by the continuous linear extension of
$$
\begin{cases}
a_i \, \Omega = e_i \text{,} \\
a_i \, e_{i_1} \otimes \ldots \otimes e_{i_n} = \mathbbm{1}_{n+1}(i, i_1, \ldots, i_n) e_i \otimes e_{i_1} \otimes \ldots \otimes e_{i_n} \text{,}
\end{cases}
$$
and
$$
\begin{cases}
a^*_i \, \Omega = 0 \text{,} \\
a^*_i \, e_j = \delta_{i,j} \Omega \text{,} \\
a^*_i \, e_j \otimes e_{i_1} \otimes \ldots \otimes e_{i_n} = \delta_{ij} e_{i_1} \otimes \ldots \otimes e_{i_n} \text{,}
\end{cases}
$$
where $\delta_{i,j}$ is the Kronecker delta. Of course $a^*_i$, is the adjoint of $a_i$. Let $\omega_i = a_i + a^*_i$. For any natural number $N$, let $a(N) = \tfrac{1}{\sqrt{N}} \sum \limits_{i=1}^N a_i$, $a^*(N) = (a(N))^*$ and $\omega(N) = a(N) + a^*(N)$.
\end{definition}

\begin{proposition}
The family $(\omega_i)_{i=1}^{\infty}$ is V-monotone independent and consists of identically distributed non-commutative random variables with mean $0$ and variance $1$.
\end{proposition}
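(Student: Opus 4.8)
The plan is to recognize the discrete construction as an instance of the abstract Hilbert space realization of Section~\ref{section:hilbertSpaceRealization} and then invoke Theorem~\ref{thm:realization}. Concretely, I would take each $\mathcal{H}_i = \mathbb{C}\,\xi_i \oplus \mathbb{C}\,e_i$ to be two-dimensional, with $\xi_i$ the distinguished unit vector and $\ovcirc{H}_i = \mathbb{C}\,e_i$ one-dimensional. Then $\ovcirc{H}_{i_1} \otimes \ldots \otimes \ovcirc{H}_{i_n} = \mathbb{C}\, e_{i_1} \otimes \ldots \otimes e_{i_n}$, so the V-monotone product $\mathcal{V}$ of the family $(\mathcal{H}_i, \xi_i)$ is, after the identification $\xi \leftrightarrow \Omega$, literally the discrete V-monotone Fock space $\mathcal{DV}$; the vacuum states match as well. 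On $\mathcal{H}_i$ let $T_i$ be the self-adjoint flip $T_i\,\xi_i = e_i$, $T_i\,e_i = \xi_i$, which satisfies $\varphi_i(T_i) = \langle e_i,\xi_i\rangle = 0$ and $T_i^2 = \id_i$.

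The key step is the operator identity $\omega_i = \widetilde{\lambda}_i(T_i)$, which I would verify by evaluating both sides on each basis vector and matching against the list in Proposition~\ref{proposition:actingOfLambdaTilde}. Writing a typical basis vector as $v = e_{j_1} \otimes \ldots \otimes e_{j_n}$ with $(j_1, \ldots, j_n) \in I_n$, there are three cases, governed by exactly the indicator that switches $a_i$ on and off. If $j_1 = i$, then the leftmost tensor leg equals $e_i$ and $\widetilde{\lambda}_i(T_i)$ strips it off (matching $a^*_i$, while $a_i v = 0$ because $(i,i,\dots) \notin I_{n+1}$); if $j_1 \neq i$ and $i \lsim (j_1,\ldots,j_n)$, then $\widetilde{\lambda}_i(T_i)$ prepends $e_i$ (matching $a_i$, while $a^*_i v = 0$); and if $(j_1,\ldots,j_n) \in I_n \setminus I_n(i)$, then both sides vanish. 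The vacuum case $\widetilde{\lambda}_i(T_i)\,\xi = T_i^{\perp}\xi_i = e_i = \omega_i\,\Omega$ is immediate. Since $\widetilde{\lambda}_i$ is a (non-unital) homomorphism, this identity also yields $\omega_i^2 = \widetilde{\lambda}_i(T_i^2) = \widetilde{\lambda}_i(\id_i) = U_i$, so that $U_i$ is the inner unit of the algebra generated by $\omega_i$ and coincides with the $\unitInTheAlgebra_i = \widetilde{\lambda}_i(\unitInTheAlgebra_i)$ appearing in Definition~\ref{definition:definitionOfVmonotoneIndependence}.

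With the identification in hand, V-monotone independence follows at once: taking $\mathcal{A}_i$ to be the unital $*$-algebra generated by $T_i$ (equivalently $\mathrm{span}\{\id_i, T_i\}$, whose GNS representation with respect to $\varphi_i$ is the inclusion into $\bound(\mathcal{H}_i)$, as $\xi_i$ is cyclic and $\varphi_i$ is faithful on $\mathcal{A}_i$), Theorem~\ref{thm:realization} tells us that $(\widetilde{\lambda}_i(\mathcal{A}_i))_{i} = (\mathrm{span}\{U_i,\omega_i\})_{i}$ is V-monotone independent with respect to $\varphi$. For the remaining assertions I would compute the single-variable moments directly: using $U_i\,\xi = \xi$ and the vacuum-preserving property of $\lambda_i$ one gets $\varphi(\omega_i^n) = \varphi(\widetilde{\lambda}_i(T_i^n)) = \varphi_i(T_i^n) = \langle T_i^n\xi_i,\xi_i\rangle$, which equals $1$ for even $n$ and $0$ for odd $n$ since $T_i^2 = \id_i$; in particular the mean is $0$, the variance is $\varphi(\omega_i^2) - \varphi(\omega_i)^2 = 1$, and the value is independent of $i$, so the $\omega_i$ are identically distributed. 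The only genuinely laborious point is the case analysis establishing $\omega_i = \widetilde{\lambda}_i(T_i)$: one must track carefully when membership in $I_{n+1}$ (equivalently, the relation $i \lsim (j_1,\ldots,j_n)$ and the sets $I_n(i)$) toggles the creation and annihilation parts, but no case presents a conceptual difficulty.
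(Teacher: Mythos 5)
Your proposal is correct and takes essentially the same approach as the paper: the paper's proof also sets $\mathcal{K}_i = \mathbb{C}^2$ with distinguished unit vector $\Omega_i$, identifies $\mathcal{DV}$ with the V-monotone product of $(\mathcal{K}_i, \Omega_i)_{i \in I}$, checks that $\omega_i = \widetilde{\lambda}_i(T_i)$ for the flip matrix $T_i$, and then invokes Theorem~\ref{thm:realization}, concluding with the observation that each $T_i$ has distribution $\tfrac{1}{2}(\delta_{-1}+\delta_{1})$. Your write-up simply fills in the details the paper leaves to the reader (the basis-vector case analysis behind ``one can check,'' the identification of the inner unit $U_i$, and the moment computation), all of which are carried out correctly.
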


\begin{proof}
Let $\mathcal{K}_i = \mathbbm{C}^2$, $\Omega_i = (1,0)$, $e_i = (0,1)$ and let $\varphi_i$ be the vector state associated with $\Omega_i$. Then $\mathcal{DV}$ is isomorphic (as a Hilbert space) to the V-monotone product of $(\mathcal{K}_i, \Omega_i)_{i \in I}$. One can check that $\omega_i = \widetilde{\lambda}_i(T_i)$, where 
$$
T_i = \left[
\begin{array}{ll}
0 & 1 \\
1 & 0
\end{array}
\right] \text{.}
$$
V-monotone independence follows then from Theorem~\ref{thm:realization}. Of course, each $T_i$ has the distribution $\tfrac{1}{2} (\delta_{-1}+\delta_{1})$.
\end{proof}

\begin{corollary}
\label{corollary:discreteFockCLT}
The limit distribution of $\omega(N)$ (for $N \to \infty$) is the V-monotone standard Gaussian distribution.
\end{corollary}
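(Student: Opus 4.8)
The plan is to deduce the statement directly from the V-monotone central limit theorem (Theorem~\ref{thm:CLT}) together with the Proposition immediately preceding this corollary, so that essentially nothing new has to be computed. The key observation is that $\omega(N)$ coincides with the normalized sum $S_N$ to which Theorem~\ref{thm:CLT} applies. Indeed, by definition $\omega(N) = a(N) + a^*(N)$ with $a(N) = \tfrac{1}{\sqrt{N}} \sum_{i=1}^N a_i$ and $a^*(N) = (a(N))^* = \tfrac{1}{\sqrt{N}} \sum_{i=1}^N a^*_i$, so that
$$
\omega(N) = \frac{1}{\sqrt{N}} \sum_{i=1}^N (a_i + a^*_i) = \frac{1}{\sqrt{N}} \sum_{i=1}^N \omega_i \text{.}
$$
Hence $\omega(N)$ is precisely the object $S_N$ appearing in Theorem~\ref{thm:CLT}, built from the family $(\omega_i)_{i=1}^{\infty}$.

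It then remains only to check that this family satisfies the hypotheses of Theorem~\ref{thm:CLT}, and this is exactly the content of the preceding Proposition: the operators $(\omega_i)_{i=1}^{\infty}$ are V-monotone independent with respect to the vacuum state $\varphi$ on $\mathcal{DV}$, identically distributed, and have mean $0$ and variance $1$. Applying Theorem~\ref{thm:CLT} to this family, we obtain that for every $n \in \mathbb{N}$ the limit $\lim_{N \to \infty} \varphi(\omega(N)^n)$ equals the $n$-th moment of the standard V-monotone Gaussian distribution, which is the assertion (convergence being understood, as in Theorem~\ref{thm:CLT}, in the sense of moments). I do not expect any essential obstacle here; the only points requiring attention are the purely bookkeeping identification of $\omega(N)$ with $S_N$ displayed above and the confirmation that the state $\varphi$ on $\mathcal{DV}$ with respect to which the limit is taken is the very same vacuum functional with respect to which V-monotone independence of the $(\omega_i)$ was established. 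Once these are noted, the corollary follows immediately.
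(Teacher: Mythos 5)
Your proposal is correct and is exactly the argument the paper intends: the corollary is stated without proof precisely because, as you observe, $\omega(N) = \tfrac{1}{\sqrt{N}}\sum_{i=1}^N \omega_i$ is the sum $S_N$ of Theorem~\ref{thm:CLT}, whose hypotheses (V-monotone independence, identical distribution, mean $0$, variance $1$ with respect to the vacuum state $\varphi$ on $\mathcal{DV}$) are supplied by the Proposition immediately preceding it. Nothing further is needed.
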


Now we show that operators $a$ and $a^*$ can be approximated in some sense by $a(N)$ and $a^*(N)$, respectively. We need some auxiliary objects in order to prove this fact.

\begin{definition}
Let $\mathcal{D}_n$ be the set of all $g \in \mathcal{H}_n$ such that
$$
g(\vec{x}) = 
\begin{cases}
g_{n,m}(\vec{x}) & \text{if $\vec{x} \in \mathcal{X}_{n,m} \cap [0,1]^n$ for $m=1,\ldots,n$} \\
0 & \text{otherwise,}
\end{cases}
$$
where $\vec{x} = (x_1, \ldots, x_n)$ and $g_{n,m}$ is a polynomial. We put $\mathcal{D}_0 = \mathcal{H}_0$. Define vectors
\begin{align*}
v_0^N(\Lambda) = & \Omega \\
v_{n}^N(g) = & N^{-n/2} \sum \limits_{(i_1,\ldots,i_n) \in I_n(N)}  g(i_1/N,\ldots,i_n/N) e_{i_1} \otimes \ldots \otimes e_{i_n} \text{,}
\end{align*}
where $I_n(N) = I_n \cap [N]^n$ and $g \in \mathcal{D}_n$.
\end{definition}

\begin{lemma}
\label{lem:operatorApproximationx1}
For any $n>0$ and any $g \in \mathcal{D}_n$, we have
\begin{align*}
a(N) \, v_n^N(g) & = v_{n+1}^N(a \, g) \\
a^*(N) \, v_{n}^N(g) & = v^N_{n-1}(a^* \, g) + O(1/N) \text{,}
\end{align*}
where the symbol $O(1/N)$ means that the norm of the difference is smaller than $C/N$ for some $C>0$, independent of $N$.
\end{lemma}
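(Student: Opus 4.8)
The plan is to handle the two identities separately: the creation identity will be an exact matching of index sets, whereas the annihilation identity reduces to a Riemann-sum estimate that is uniform in the surviving indices. For the creation identity I would expand both sides directly. Applying $a(N)=N^{-1/2}\sum_{i=1}^N a_i$ to $v_n^N(g)$ and using the defining action of $a_i$ gives
$$a(N)\,v_n^N(g) = N^{-(n+1)/2}\sum_{i=1}^N \sum_{(i_1,\ldots,i_n)\in I_n(N)} \mathbbm{1}_{n+1}(i,i_1,\ldots,i_n)\, g(i_1/N,\ldots,i_n/N)\, e_i\otimes e_{i_1}\otimes\cdots\otimes e_{i_n}.$$
The key combinatorial fact is that deleting the first entry of any sequence in $I_{n+1}$ always leaves a sequence in $I_n$ (a V-shape stays a V-shape), while conversely prepending $i$ to a sequence of $I_n$ lands in $I_{n+1}$ exactly when the discrete indicator $\mathbbm{1}_{n+1}(i,i_1,\ldots,i_n)$ equals $1$; hence the contributing pairs are in bijection with $I_{n+1}(N)$. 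Since multiplication by $1/N>0$ preserves all the strict inequalities defining $\mathcal{X}_{n+1}$, the discrete indicator coincides with the continuous one evaluated at the scaled point, and for $i\in[N]$ one also has $\mathbbm{1}_{[0,1]}(i/N)=1$. Comparing with $(a\,g)(i/N,i_1/N,\ldots,i_n/N)$, this is precisely $v_{n+1}^N(a\,g)$, so the first identity holds with no error term.

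For the annihilation identity I would apply $a^*(N)=N^{-1/2}\sum_i a_i^*$, use $a_i^*\,e_{i_1}\otimes\cdots\otimes e_{i_n}=\delta_{i,i_1}\,e_{i_2}\otimes\cdots\otimes e_{i_n}$, and then regroup the output by the index $\vec{k}=(i_2,\ldots,i_n)\in I_{n-1}(N)$, obtaining
$$a^*(N)\,v_n^N(g) = N^{-(n-1)/2}\sum_{\vec{k}\in I_{n-1}(N)} \Bigl[\tfrac{1}{N}\sum_{i_1\,:\,(i_1,\vec{k})\in I_n(N)} g(i_1/N,\vec{k}/N)\Bigr]\, e_{k_1}\otimes\cdots\otimes e_{k_{n-1}}.$$
Next I would determine the admissible range of $i_1$: if $\vec{k}$ is increasing then $(i_1,\vec{k})\in I_n$ for every $i_1\neq k_1$, whereas if $\vec{k}$ is not increasing only $i_1>k_1$ is admissible (prepending a smaller value would create an interior local maximum). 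Since $g\in\mathcal{D}_n$ is supported in $[0,1]^n$, the integrals in the definition of $a^*=a^*(\mathbbm{1}_{[0,1]})$ collapse to $\int_0^{x_1}$ and $\int_{x_1}^1$, so the bracketed expression is exactly the right Riemann sum for the integral defining $(a^*g)(\vec{k}/N)$: the increasing case yields $\int_0^{1}$ split at $k_1/N$ into the $\mathbbm{1}_{n-1,1}\int_0^{x_1}$ and $\int_{x_1}^\infty$ pieces, and the non-increasing case yields only the $\int_{x_1}^\infty$ piece.

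The heart of the argument, and the step I expect to be the main obstacle, is bounding the discrepancy $\epsilon(\vec{k})$ between this Riemann sum and $(a^*g)(\vec{k}/N)$ uniformly in $\vec{k}$. The subtlety is that the integrand $x\mapsto g(x,\vec{k}/N)$ is only piecewise polynomial: as $x$ crosses $x_1=k_1/N$ the point $(x,\vec{k}/N)$ moves from one region $\mathcal{X}_{n,m}$ into the next, where $g$ may jump. Crucially this jump sits exactly where the $a^*$ formula splits the integral, so I would split the Riemann sum at $i_1=k_1$ too, discarding the single excluded term $i_1=k_1$ (which costs only $O(1/N)$) and applying the standard $C^1$ Riemann-sum bound separately on each smooth polynomial piece $g_{n,m}$. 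As each $g_{n,m}$ has $x$-derivative bounded on $[0,1]^n$, this gives $|\epsilon(\vec{k})|\le C/N$ with $C$ independent of $\vec{k}$ and $N$. Finally, by orthonormality of the basis vectors,
$$\bigl\|a^*(N)\,v_n^N(g)-v_{n-1}^N(a^*g)\bigr\|^2 = N^{-(n-1)}\sum_{\vec{k}\in I_{n-1}(N)}|\epsilon(\vec{k})|^2 \le N^{-(n-1)}\,\card{I_{n-1}(N)}\,\max_{\vec{k}}|\epsilon(\vec{k})|^2,$$
and since $\card{I_{n-1}(N)}\le N^{n-1}$ while $\max_{\vec{k}}|\epsilon(\vec{k})|^2=O(1/N^2)$, the right-hand side is $O(1/N^2)$, which yields the claimed $O(1/N)$ estimate.
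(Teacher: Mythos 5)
Your proposal is correct and follows essentially the same route as the paper's proof: the creation identity is an exact bookkeeping of indices, while the annihilation estimate is handled coefficient-by-coefficient, splitting the Riemann sum at $i_1=k_1$ into the two polynomial pieces, absorbing the single boundary term at cost $O(1/N)$, applying the mean-value (i.e.\ $C^1$ Riemann-sum) bound on each piece, and then summing the uniformly $O(1/N)$ discrepancies over the at most $N^{n-1}$ basis vectors. The only difference is presentational: you spell out the index bijection for the creation identity, which the paper dismisses as immediate from the definitions.
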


\begin{proof}
The first statement follows directly from definitions of $a(N)$ and $v_n^N$. We prove the second one. Let $n \geq 0$ and let $g \in \mathcal{D}_{n+1}$. Then
\begin{multline*}
a^*(N) \, v_{n+1}^N(g) = \\
{N^{-n/2} \sum \limits_{(i_1, \ldots, i_n) \in I_n(N)} \left[ \mathbbm{1}_{n,1}(i_1, \ldots, i_n) \dfrac{1}{N} \sum \limits_{i=1}^{i_1-1} g(i/N, i_1/N, \ldots, i_n/N) \right.} \\
\left. + \dfrac{1}{N} \sum \limits_{i=i_1+1}^N g(i/N, i_1/N, \ldots, i_n/N) \right] e_{i_1} \otimes \ldots \otimes e_{i_n} \text{}
\end{multline*}
and
$$
(a^* \, g)(x_1, \ldots, x_n) = \left[ \mathbbm{1}_{n,1}(x_1, \ldots, x_n) \int \limits_0^{x_1} \diff x + \int \limits_{x_1}^1 \diff x \right] g(x, x_1, \ldots, x_n) \text{.}
$$
Let $\alpha_{i_1, \ldots, i_n}$ be the coefficient of $e_{i_1} \otimes \ldots \otimes e_{i_n}$ in $N^{n/2} (a^*(N) \, v_{n+1}^N(g) - v_{n}(a^* \, g))$. If $(i_1, \ldots, i_n) \in I_{n,m}(N)$ for $m>1$, then it has the following form:
\begin{equation*}
\dfrac{1}{N} \sum \limits_{i=i_1+1}^N g_{n+1,m+1}(i/N, i_1/N, \ldots, i_n/N) - \int \limits_{i_1/N}^{1} \diff x \, g_{n+1,m+1}(x, i_1/N, \ldots, i_n/N) \text{,}
\end{equation*}
which we rewrite as
\begin{equation}
\label{eq:coefficientOfSimpleTensor}
\sum \limits_{i=i_1+1}^N \int \limits_{(i-1)/N}^{i/N} \diff x \, \left[ g_{n+1,m+1}(i/N, i_1/N, \ldots, i_n/N) - g_{n+1,m+1}(x, i_1/N, \ldots, i_n/N) \right] \text{.}
\end{equation}
By the Lagrange Mean Value Theorem, we get
\begin{multline}
\label{eq:LagrangesMVEstimation}
|g_{n+1,m+1}(i/N, i_1/N, \ldots, i_n/N) - g_{n+1,m+1}(x, i_1/N, \ldots, i_n/N)| \\
= (x-i/N) |\partial_x \, g_{n+1,m+1}(c, i_1/N, \ldots, i_n/N)| \\
\leq \dfrac{\Vert \partial_x \, g_{n+1,m+1} \Vert_{\infty}}{N} \text{}
\end{multline}
for $i \in [i_1+1,N]$, a real number $x \in [(i-1)/N, i/N]$, and some $c$ between $x$ and $i/N$ (by $\Vert . \Vert_{\infty}$ we mean the maximum of a polynomial on $[0,1]^{n+1}$). Using the triangle inequality and~\eqref{eq:LagrangesMVEstimation}, we estimate the absolute value of~\eqref{eq:coefficientOfSimpleTensor} by
$$
\sum \limits_{i=i_1+1}^N \int \limits_{(i-1)/N}^{i/N} \diff x \, \dfrac{\Vert \partial_x \, g_{n+1,m+1} \Vert_{\infty}}{N} \leq \dfrac{\Vert \partial_x \, g_{n+1,m+1} \Vert_{\infty}}{N} \text{.}
$$
If $(i_1, \ldots, i_n) \in I_{n,1}$, then $\alpha_{i_1, \ldots, i_n}$ can be rewritten as
\begin{equation*}
\begin{split}
& \sum \limits_{i=1}^{i_1} \int \limits_{(i-1)/N}^{i/N} \diff x \, \left[ g_{n+1,1}(i/N, i_1/N, \ldots, i_n/N) -  g_{n+1,1}(x, i_1/N, \ldots, i_n/N) \right] \\
& - \dfrac{1}{N} g_{n+1,1}(i_1/N, i_1/N, \ldots, i_n/N) \\
& + \sum \limits_{i=i_1+1}^N \int \limits_{(i-1)/N}^{i/N} \diff x \, \left[ g_{n+1,2}(i/N, i_1/N, \ldots, i_n/N) - g_{n+1,2}(x, i_1/N, \ldots, i_n/N) \right] \text{.}
\end{split}
\end{equation*}
and we estimate its absolute value similarly by
$$
\dfrac{ \Vert g_{n+1,1} \Vert_{\infty} + \max(\Vert \partial_x \, g_{n+1,1} \Vert_{\infty}, \Vert \partial_x \, g_{n+1,2} \Vert_{\infty}) }{N} \text{.}
$$
Hence
\begin{align*}
\Vert a^*(N) \, v_{n+1}^N(g) - v_{n}(a^* \, g) \Vert^2 & = N^{-n} \sum \limits_{(i_1, \ldots, i_n) \in I_n(N)} |\alpha_{i_1, \ldots, i_n}|^2 \\
& \leq \dfrac{1}{N^2} \left( \Vert g_{n+1,1} \Vert_{\infty} + \max \limits_{m=1,\ldots,n+1} \Vert \partial_x \, g_{n+1,m} \Vert_{\infty} \right)^2 \text{,}
\end{align*}
which completes the proof.
\end{proof}

Now we will investigate how the operator $\omega(N)$ approximates $\omega$. 

\begin{definition}
For each non-crossing pair partition $\pi$, we define recursively the following operators:
\begin{align*}
a_{ \pi } & =
\begin{cases}
\id & \text{if $\pi = \{ \emptyset \}$} \\
a^* a_{\pi'} a a_{\pi''} & \text{if $\pi = \pairPartitionRecurrence{\pi'}{\pi''}$} 
\end{cases} \text{,} \\
a_{ \pi }(N) & =
\begin{cases}
\id & \text{if $\pi = \{ \emptyset \}$} \\
a^*(N) a_{\pi'}(N) a(N) a_{\pi''}(N) & \text{if $\pi = \pairPartitionRecurrence{\pi'}{\pi''}$}
\end{cases} \text{.}
\end{align*}
\end{definition}

\begin{lemma}
\label{lem:operatorApproximation2}
For any $n>0$, a function $g \in \mathcal{D}_n$ and every $\pi \in \noncr^2$, we have
\begin{align*}
a_{\pi}(N) \, \Omega & = \phi(a_{\pi}) \Omega + O(1/N) \\
a_{\pi}(N) \, v_n^N(g) & = v_n^N(a_{\pi} \, g) + O(1/N) \text{,}
\end{align*}
where the symbol $O(1/N)$ means that the norm of the difference is smaller than $C/N$ for some $C>0$, independent of $N$.
\end{lemma}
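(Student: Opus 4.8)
The plan is to argue by induction on the number of blocks of $\pi$, proving both identities simultaneously and exploiting the recursive structure $\pi = \pairPartitionRecurrence{\pi'}{\pi''}$ that already underlies the definitions of $a_\pi$ and $a_\pi(N)$. In the base case $\pi = \{\emptyset\}$ both operators equal $\id$ and both claims are immediate, since $\phi(\id) = 1$ and $v_n^N(\id\, g) = v_n^N(g)$. Before starting I would record a routine preliminary: the space $\bigoplus_n \mathcal{D}_n$ is invariant under $a$ and $a^*$ (multiplication by the characteristic functions $\mathbbm{1}_{[0,1]}\mathbbm{1}_{n+1}$ keeps a piecewise polynomial piecewise polynomial, and the integrations defining $a^*$ send polynomials to polynomials), so that $a_{\pi''}g$, $a\,a_{\pi''}g$, $a_{\pi'}a\,a_{\pi''}g$ and the like all lie in the appropriate $\mathcal{D}_k$ and Lemma~\ref{lem:operatorApproximationx1} is applicable at every stage.

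For the second (tensorial) identity with $n>0$ I would peel the four factors of $a_\pi(N) = a^*(N)\,a_{\pi'}(N)\,a(N)\,a_{\pi''}(N)$ off from the right. First, the induction hypothesis applied to $\pi''$ gives $a_{\pi''}(N)\,v_n^N(g) = v_n^N(a_{\pi''}g) + O(1/N)$. Applying $a(N)$ and invoking the exact creation identity of Lemma~\ref{lem:operatorApproximationx1} yields $v_{n+1}^N(a\,a_{\pi''}g) + O(1/N)$; the induction hypothesis for $\pi'$ then produces $v_{n+1}^N(a_{\pi'}a\,a_{\pi''}g) + O(1/N)$; and a final application of $a^*(N)$ together with the annihilation identity of Lemma~\ref{lem:operatorApproximationx1} gives $v_n^N(a^* a_{\pi'} a\, a_{\pi''} g) + O(1/N) = v_n^N(a_\pi g) + O(1/N)$, as desired. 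Since $\pi'$ and $\pi''$ have strictly fewer blocks than $\pi$, this closes the induction for the tensorial statement.

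For the vacuum identity I would use the same peeling, starting from $\Omega = v_0^N(\Lambda)$. The vacuum induction hypothesis for $\pi''$ gives $a_{\pi''}(N)\Omega = \phi(a_{\pi''})\Omega + O(1/N)$, and since $a(N)\Omega = v_1^N(f)$ exactly (here $f = \mathbbm{1}_{[0,1]} = a\Lambda$), one lands on $\phi(a_{\pi''})\,v_1^N(f) + O(1/N)$, i.e. back in the regime $n=1>0$. The tensorial step applied to $\pi'$, followed by $a^*(N)$ and Lemma~\ref{lem:operatorApproximationx1}, then yields $v_0^N(a^* a_{\pi'} a\,\phi(a_{\pi''})\Lambda) + O(1/N)$; recognizing $a^* a_{\pi'} a\,a_{\pi''}\Lambda = a_\pi\Lambda = \phi(a_\pi)\Lambda$ (degree considerations force $a_\pi\Lambda \in \mathbb{C}\,\Lambda$) and $v_0^N(c\Lambda) = c\,\Omega$ delivers $\phi(a_\pi)\Omega + O(1/N)$.

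The delicate point — and the only place where something beyond bookkeeping is needed — is the propagation of the error terms: at each stage a bounded operator is applied to a vector of the form (principal term) $+\, O(1/N)$, and one must know that the operator does not amplify the error by a factor growing with $N$. Thus the crux is the uniform bound $\|a(N)\|, \|a^*(N)\| \leq 1$, whence $\|a_\pi(N)\| \leq 1$ for every $\pi$, uniformly in $N$. I would obtain this from the identity $a(N)^* a(N) = \tfrac1N\sum_{i=1}^N a_i^* a_i$ (the cross terms $a_i^* a_j$, $i\neq j$, vanish because $a_i^*$ only sees the leftmost tensor leg), together with the observation that on each basis vector $e_{i_1}\otimes\cdots\otimes e_{i_n}$ the operator $\sum_i a_i^* a_i$ acts by the scalar $|\{i\in[N]: (i,i_1,\ldots,i_n)\in I_{n+1}\}| \le N$, so that $a(N)^*a(N) \le \id$. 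With this bound in hand, each of the finitely many applications contributes an $O(1/N)$ term whose constant is independent of $N$ (depending only on $\pi$ and on the sup-norms of the derivatives of the polynomials built from $g$), and the finite sum of these errors is again $O(1/N)$.
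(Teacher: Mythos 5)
Your proposal is correct and follows essentially the same route as the paper's own proof: induction on the number of blocks of $\pi$, peeling the factors of $a_{\pi}(N) = a^*(N)\, a_{\pi'}(N)\, a(N)\, a_{\pi''}(N)$ off from the right via Lemma~\ref{lem:operatorApproximationx1}, with the uniform bounds $\Vert a(N) \Vert, \Vert a^*(N) \Vert, \Vert a_{\pi}(N) \Vert \leq 1$ controlling error propagation and the invariance of the spaces $\mathcal{D}_n$ under $a$ and $a^*$ keeping the lemma applicable. The only differences are cosmetic: you justify the norm bound by the explicit computation $a(N)^* a(N) = \tfrac{1}{N} \sum_{i=1}^{N} a_i^* a_i \leq \id$ where the paper cites the partial-isometry and orthogonal-range properties of the $a_i$ (the same underlying fact), and you write out the vacuum case which the paper dismisses with ``can be proven similarly''.
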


\begin{proof}
First note that the norms of $a(N)$, $a^*(N)$ and $a_{\pi}(N)$ are at most $1$. It follows immediately from the facts that $a_i$ is a partial isometry and for any $i \neq j$ we have $a_i \, \mathcal{DV} \perp a_j \, \mathcal{DV}$. Note also that the image of any $g \in \mathcal{D}_n$ under $a$ and under $a^*$ belongs to $\mathcal{D}_{n+1}$ and $\mathcal{D}_{n-1}$, respectively, for any positive integer $n$.

We will justify the second equality by induction on the number of blocks of $\pi$. Let $\pi', \pi'' \in \noncr^2$ be such that $\pi = \pairPartitionRecurrence{\pi'}{\pi''}$. We obtain from Lemma~\ref{lem:operatorApproximationx1} and from the induction hypothesis that
\begin{align*}
a_{\pi}(N) \, v_{n}^{N}(g) & = a^*(N) a_{\pi'}(N) a(N) a_{\pi''}(N) \, v_{n}^{N}(g) \\
& = a^*(N) a_{\pi'}(N) a(N) \, ( v_{n}^{N}(a_{\pi''} \, g ) + w_1 ) \\
& = a^*(N) a_{\pi'}(N) \, ( v_{n}^{N}(a a_{\pi''} \, g ) + a(N) \, w_1 ) \\
& = a^*(N) \, ( v_{n}^{N}( a_{\pi'} a a_{\pi''} \, g ) + w_2 + a_{\pi'}(N) a(N) \, w_1 ) \text{,}
\end{align*}
where $w_1, w_2 = O(1/N)$. Since $\Vert a_{\pi'}(N) a(N) \Vert \leq 1$, we get 
$$
w'_1 \coloneqq w_2 + a_{\pi'}(N) a(N) \, w_1 = O(1/N)
$$
and thus
\begin{multline*}
a_{\pi}(N) \, v_{n}^{N}(g) = v_{n}^{N}( a^* a_{\pi'} a a_{\pi''} \, g ) + w'_2 + a^*(N) \, w'_1 = v_{n}^{N}( a_{\pi} \, g ) + O(1/N) \text{}
\end{multline*}
(where $w'_2 = O(1/N)$), because $\Vert a^*(N) \Vert \leq 1$ and the induction is complete. The first equality can be proven similarly.
\end{proof}

Now we are ready to state and prove the main theorem of this section.

\begin{theorem}
\label{theorem:operator_model}
The random variable $\omega$ has the standard V-monotone Gaussian distribution.
\end{theorem}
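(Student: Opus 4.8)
The plan is to identify the moments $\phi(\omega^{n})$ with the limits of the discrete moments $\varphi(\omega(N)^{n})$ and then to read off their value from Corollary~\ref{corollary:discreteFockCLT}. The bridge between the continuous operator $\omega=a+a^{*}$ on $\mathcal{CV}$ and the discrete operators $\omega(N)=a(N)+a^{*}(N)$ on $\mathcal{DV}$ is provided by the partition operators $a_{\pi}$ and $a_{\pi}(N)$ together with the approximation in Lemma~\ref{lem:operatorApproximation2}.

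The first and main step is to prove the two \emph{exact} expansions
\[
\phi(\omega^{n})=\sum_{\pi\in\noncrk{2}(n)}\phi(a_{\pi}),\qquad \varphi(\omega(N)^{n})=\sum_{\pi\in\noncrk{2}(n)}\varphi(a_{\pi}(N)),
\]
the sums being empty (hence zero) for odd $n$. I would expand $(a+a^{*})^{n}$ into the $2^{n}$ words in the letters $a,a^{*}$ and take the vacuum expectation. Reading a word from the right, each $a$ raises and each $a^{*}$ lowers the tensor degree; since $a^{*}\Lambda=0$ and the degree-zero subspace is $\mathbb{C}\,\Lambda$, every word whose height profile ever drops below zero, or fails to return to zero, contributes nothing. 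The surviving words are exactly the Dyck words, which the last-in, first-out matching puts in bijection with $\noncrk{2}(n)$; moreover the recursion $a_{\pi}=a^{*}a_{\pi'}aa_{\pi''}$ for $\pi=\pairPartitionRecurrence{\pi'}{\pi''}$ is precisely the first-return decomposition of a Dyck word, so the operator carried by a surviving word is exactly $a_{\pi}$. Running the identical argument in $\mathcal{DV}$, with $a^{*}(N)\Omega=0$, gives the discrete identity; note that no error term is incurred here, the expansion being a purely algebraic rearrangement.

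Now I would pass to the limit in the discrete identity. For each fixed $\pi\in\noncrk{2}(n)$, Lemma~\ref{lem:operatorApproximation2} yields $a_{\pi}(N)\,\Omega=\phi(a_{\pi})\,\Omega+O(1/N)$, hence $\varphi(a_{\pi}(N))=\langle a_{\pi}(N)\,\Omega,\Omega\rangle=\phi(a_{\pi})+O(1/N)$. Since $\noncrk{2}(n)$ is finite, summing over $\pi$ and letting $N\to\infty$ gives
\[
\lim_{N\to\infty}\varphi(\omega(N)^{n})=\sum_{\pi\in\noncrk{2}(n)}\phi(a_{\pi})=\phi(\omega^{n}),
\]
the last equality being the continuous expansion. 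On the other hand, Corollary~\ref{corollary:discreteFockCLT} identifies $\lim_{N\to\infty}\varphi(\omega(N)^{n})$ with the $n$-th moment of the standard V-monotone Gaussian distribution of Theorem~\ref{thm:CLT}, namely $\card{\mathcal{OV}^{2}(2k)}/k!$ for $n=2k$ and $0$ for $n$ odd. Comparing the two evaluations of this single limit yields $\phi(\omega^{n})$ equal to that moment sequence for every $n$, which is the assertion.

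The step I expect to require the most care is the exact expansion of the second paragraph: one must verify rigorously that non-Dyck words annihilate the vacuum (so that unbalanced and crossing contributions genuinely disappear) and that the operator produced by the first-return decomposition of a Dyck word coincides, by induction on its length, with $a_{\pi}$ as defined by the recursion. Once this is in place the remainder is assembly, since the quantitative approximation is delivered by Lemma~\ref{lem:operatorApproximation2}, the value of the limit by Corollary~\ref{corollary:discreteFockCLT}, and the finiteness of $\noncrk{2}(n)$ makes the interchange of the finite sum with the limit automatic.
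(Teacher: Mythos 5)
Your proposal is correct and follows essentially the same route as the paper's own proof: the same exact expansion $\phi(\omega^{n})=\sum_{\pi\in\noncrk{2}(n)}\phi(a_{\pi})$ and $\varphi(\omega(N)^{n})=\sum_{\pi\in\noncrk{2}(n)}\varphi(a_{\pi}(N))$ established via Dyck words and the first-return decomposition, followed by the termwise limit from Lemma~\ref{lem:operatorApproximation2} and the identification of the limit moments through Corollary~\ref{corollary:discreteFockCLT}. The step you single out as delicate (non-Dyck words contributing zero, and Dyck words yielding exactly the operators $a_{\pi}$ by induction) is precisely the part the paper also works out in detail, and your sketch of it is sound.
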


\begin{proof}
We first show that 
\begin{align}
\label{eq:sumOnlyOverPairPartitions}
\begin{split}
\phi(\omega^{k}) = & \sum \limits_{\pi \in \noncrk{2}(k)} \phi(a_{\pi}) \text{,} \\
\varphi((\omega(N))^{k}) = & \sum \limits_{\pi \in \noncrk{2}(k)} \varphi(a_{\pi}(N)) \text{}
\end{split}
\end{align}
for any $k \in \mathbb{N}$. We adopt the convention that the value of the sum over the empty set is equal to zero. Let $\varepsilon \in \{ 1, * \}$. We will prove only the first equality, because the second one can be proven similarly. 
Observe that the image of $\mathcal{H}_{l}$ under $a$ and $a^*$ is a subset of $\mathcal{H}_{l+1}$ and $\mathcal{H}_{l-1}$ (we put $\mathcal{H}_{-1} = \{ 0 \}$), respectively. Define
$$
a^{\varepsilon} = 
\begin{cases}
a & \text{if $\varepsilon = 1$} \\
a^* & \text{if $\varepsilon = *$}
\end{cases} \text{}
\qquad \text{and} \qquad
\sgn(\varepsilon) = 
\begin{cases}
-1 & \text{if $\varepsilon = 1$} \\
1 & \text{if $\varepsilon = *$}
\end{cases} \text{.}
$$
Assume that $\varepsilon$'s in the operator $b = a^{\varepsilon_1} \ldots a^{\varepsilon_k}$ do not fulfill the conditions
\begin{equation}
\label{eq:catalanPathsConditions}
\begin{cases}
\sgn(\varepsilon_1) + \ldots + \sgn(\varepsilon_l) \geq 0 & \text{for all $1 \leq l < k$,} \\
\sgn(\varepsilon_1) + \ldots + \sgn(\varepsilon_n) = 0 \text{} & \\
\end{cases}
\end{equation}
(cf. conditions (2.6) in~\cite{NicaSpeicher2006}). Then there exists $l \in [k]$ such that $\sgn(\varepsilon_l) + \ldots + \sgn(\varepsilon_k) > 0$ and we take the largest such $l$. If $l > 1$, then we have
$$
a^{\varepsilon_l} \ldots a^{\varepsilon_k} \, \Lambda = a^* a^{\varepsilon_{l+1}} \ldots a^{\varepsilon_k} \, \Lambda = a^* \, C \Lambda = 0 \text{}
$$
for some constant $C \in \mathbb{C}$. For $l=1$ we have $b \, \Lambda \perp \Lambda$. In both cases, $\phi(b) = 0$. 

Otherwise there exists $\pi \in \noncrk{2}$ such that $b = a_{\pi}$, which we shall prove by induction. We may only consider $k=2n+2$ for some non-negative integer $n$. Let $m \in [n]$ be the smallest number such that $\sgn(\varepsilon_1) + \ldots + \sgn(\varepsilon_{2m+2}) = 0$. We have 
$$
b = a^* a^{\varepsilon_2} \ldots a^{\varepsilon_{2m+1}} a a^{\varepsilon_{2m+3}} \ldots a^{\varepsilon_{2n+2}} = a^* a_{\pi'} a a_{\pi''} = a_{\pi} \text{,}
$$
where $\noncrk{2}(2n+2) \ni \pi = \pairPartitionRecurrence{\pi'}{\pi''}$ for some $\pi' \in \noncrk{2}(2m)$ and $\pi'' \in \noncrk{2}(2n-2m)$, since both the sequences $(\varepsilon_2, \ldots, \varepsilon_{2m+1})$ and $(\varepsilon_{2m+3}, \ldots, \varepsilon_{2n+2})$ fulfill~\eqref{eq:catalanPathsConditions}. Of course every operator $a_{\pi}$ for $\pi \in \noncrk{2}(2n)$ has the form $a^{\varepsilon_l} \ldots a^{\varepsilon_{2n}}$ for some sequence $(\varepsilon_1, \ldots, \varepsilon_{2n})$ satisfying~\eqref{eq:catalanPathsConditions}.

Lemma~\ref{lem:operatorApproximation2} implies that for any $\pi \in \noncrk{2}$ 
$$
\lim \limits_{N \to \infty} \varphi( a_{\pi}(N) ) = \phi( a_{\pi} ) \text{.}
$$
Combining it with~\eqref{eq:sumOnlyOverPairPartitions}, we get
$$
\lim \limits_{N \to \infty} \varphi( (\omega(N))^k ) = \phi(\omega^k) \text{}
$$
for any natural number $k$. The assertion follows then from Corollary~\ref{corollary:discreteFockCLT}.
\end{proof}

\section{Moment generating function}
We investigate the distribution of $\omega$. First, we describe how an operator $a_{\pi}$ acts on functions $g \in \mathcal{D}_n$. We introduce two families of polynomials $(P_\pi)_{\pi \in \noncrk{2}}$, $(Q_{\pi})_{\pi \in \noncrk{2}}$ defined on $[0,1]$ recursively by
\begin{align*}
& \begin{cases}
P_{ \pi } = \mathbbm{1}_{[0,1]} & \text{if $\pi = \{ \emptyset \}$} \\
P_{ \pi }(x) = \left[ \int \limits_{0}^{x} P_{\pi'}(t) \, \diff t + \int \limits_{x}^{1} Q_{\pi'}(t) \, \diff t \right] P_{\pi''}(x) & \text{if $\pi = \pairPartitionRecurrence{\pi'}{\pi''}$}
\end{cases} \text{,} \\
& \begin{cases}
Q_{ \pi } = \mathbbm{1}_{[0,1]} & \text{if $\pi = \{ \emptyset \}$} \\
Q_{ \pi }(x) = \left[ \int \limits_{x}^{1} Q_{\pi'}(t) \, \diff t \right] Q_{\pi''}(x) & \text{if $\pi = \pairPartitionRecurrence{\pi'}{\pi''}$.}
\end{cases}
\end{align*}

\begin{remark}
\label{remark:polynomialsQexpressedByUsingMurakisNumbers}
We have $Q_{\pi}(x) = q_{\pi} (1-x)^{|\pi|}$ for any $\pi \in \noncrk{2}$ and any $x \in [0,1]$, where $|\pi|$ denotes the number of blocks of $\pi$ and where the sequence $(q_{\pi})_{\pi \in \noncrk{2}}$ is defined by the following recurrence relation:
$$
\begin{cases}
q_{ \pi } = 1 & \text{if $\pi = \{ \emptyset \}$} \\
q_{ \pi }(x) = \dfrac{q_{\pi'} q_{\pi''}}{|\pi'|+1} & \text{if $\pi = \pairPartitionRecurrence{\pi'}{\pi''}$}
\end{cases} \text{.}
$$
\end{remark}

\begin{proof}
Let $\pi = \pairPartitionRecurrence{\pi'}{\pi''}$. By induction on the number of blocks, we obtain
\begin{align*}
Q_{\pi}(x) = \left[ \int \limits_{x}^{1} Q_{\pi'}(t) \, \diff t \right] Q_{\pi''}(x) & = q_{\pi'} \left[ \int \limits_{x}^{1} (1-t)^{|\pi'|} \, \diff t \right] q_{\pi''}(1-x)^{|\pi''|} \\
& = \dfrac{q_{\pi'} (1-x)^{|\pi'|+1}}{|\pi'|+1} q_{\pi''}(1-x)^{|\pi''|} \\
& = q_{\pi} (1-x)^{|\pi|} \text{.}
\end{align*}
\end{proof}

\begin{example}
Some examples of $P_{\pi}$ and $Q_{\pi}$ for the smallest non-crossing pair partitions are presented in Fig.~\ref{figure:pairPartitionPolynomialsOfTypePandQ}. 
\begin{figure}[H]
\centering
\begin{tikzpicture}
    \tikzstyle{leg} = [circle, draw=black, fill=black!100, text=black!100, thin, inner sep=0pt, minimum size=2.0]

    \pgfmathsetmacro {\dx}{0.25}
    \pgfmathsetmacro {\dy}{0.40}

    \pgfmathsetmacro {\x}{2*\dx}
    \pgfmathsetmacro {\y}{0*\dy}

    \draw[-] (\x+0*\dx,\y+0*\dy) -- (\x+0*\dx,\y+1*\dy);
    \node () at (\x+0*\dx,\y+0*\dy) [leg] {.};
    \draw[-] (\x+1*\dx,\y+0*\dy) -- (\x+1*\dx,\y+1*\dy);
    \node () at (\x+1*\dx,\y+0*\dy) [leg] {.};
    \draw[-] (\x+0*\dx,\y+1*\dy) -- (\x+1*\dx,\y+1*\dy);
    
    \node () at (\x+3.5*\dx,\y+0*\dy) [label={[label distance = -1.5mm]right:\small{$Q_{\pi} = 1-x$}}] {};
    \node () at (\x+3.5*\dx,\y+2*\dy) [label={[label distance = -1.5mm]right:\small{$P_{\pi} = 1$}}] {};

    \pgfmathsetmacro {\x}{19*\dx}
    \pgfmathsetmacro {\y}{0*\dy}

    \draw[-] (\x+0*\dx,\y+0*\dy) -- (\x+0*\dx,\y+1*\dy);
    \node () at (\x+0*\dx,\y+0*\dy) [leg] {.};
    \draw[-] (\x+1*\dx,\y+0*\dy) -- (\x+1*\dx,\y+1*\dy);
    \node () at (\x+1*\dx,\y+0*\dy) [leg] {.};
    \draw[-] (\x+0*\dx,\y+1*\dy) -- (\x+1*\dx,\y+1*\dy);
    
    \draw[-] (\x+2*\dx,\y+0*\dy) -- (\x+2*\dx,\y+1*\dy);
    \node () at (\x+2*\dx,\y+0*\dy) [leg] {.};
    \draw[-] (\x+3*\dx,\y+0*\dy) -- (\x+3*\dx,\y+1*\dy);
    \node () at (\x+3*\dx,\y+0*\dy) [leg] {.};
    \draw[-] (\x+2*\dx,\y+1*\dy) -- (\x+3*\dx,\y+1*\dy);
    
    \node () at (\x+4.5*\dx,\y+0*\dy) [label={[label distance = -1.5mm]right:\small{$Q_{\pi} = (1-x)^2$}}] {};
    \node () at (\x+4.5*\dx,\y+2*\dy) [label={[label distance = -1.5mm]right:\small{$P_{\pi} = 1$}}] {};

    \pgfmathsetmacro {\x}{37*\dx}
    \pgfmathsetmacro {\y}{0*\dy}

    \draw[-] (\x+0*\dx,\y+0*\dy) -- (\x+0*\dx,\y+2*\dy);
    \node () at (\x+0*\dx,\y+0*\dy) [leg] {.};
    \draw[-] (\x+3*\dx,\y+0*\dy) -- (\x+3*\dx,\y+2*\dy);
    \node () at (\x+3*\dx,\y+0*\dy) [leg] {.};
    \draw[-] (\x+0*\dx,\y+2*\dy) -- (\x+3*\dx,\y+2*\dy);
    
    \draw[-] (\x+1*\dx,\y+0*\dy) -- (\x+1*\dx,\y+1*\dy);
    \node () at (\x+1*\dx,\y+0*\dy) [leg] {.};
    \draw[-] (\x+2*\dx,\y+0*\dy) -- (\x+2*\dx,\y+1*\dy);
    \node () at (\x+2*\dx,\y+0*\dy) [leg] {.};
    \draw[-] (\x+1*\dx,\y+1*\dy) -- (\x+2*\dx,\y+1*\dy);
    
    \node () at (\x+4.5*\dx,\y+0*\dy) [label={[label distance = -1.5mm]right:\small{$Q_{\pi} = \tfrac{(1-x)^2}{2}$}}] {};
    \node () at (\x+4.5*\dx,\y+2*\dy) [label={[label distance = -1.5mm]right:\small{$P_{\pi} = \tfrac{x^2+1}{2}$}}] {};

    \pgfmathsetmacro {\x}{0*\dx}
    \pgfmathsetmacro {\y}{-6*\dy}

    \draw[-] (\x+0*\dx,\y+0*\dy) -- (\x+0*\dx,\y+1*\dy);
    \node () at (\x+0*\dx,\y+0*\dy) [leg] {.};
    \draw[-] (\x+1*\dx,\y+0*\dy) -- (\x+1*\dx,\y+1*\dy);
    \node () at (\x+1*\dx,\y+0*\dy) [leg] {.};
    \draw[-] (\x+0*\dx,\y+1*\dy) -- (\x+1*\dx,\y+1*\dy);
    
    \draw[-] (\x+2*\dx,\y+0*\dy) -- (\x+2*\dx,\y+1*\dy);
    \node () at (\x+2*\dx,\y+0*\dy) [leg] {.};
    \draw[-] (\x+3*\dx,\y+0*\dy) -- (\x+3*\dx,\y+1*\dy);
    \node () at (\x+3*\dx,\y+0*\dy) [leg] {.};
    \draw[-] (\x+2*\dx,\y+1*\dy) -- (\x+3*\dx,\y+1*\dy);
    
    \draw[-] (\x+4*\dx,\y+0*\dy) -- (\x+4*\dx,\y+1*\dy);
    \node () at (\x+4*\dx,\y+0*\dy) [leg] {.};
    \draw[-] (\x+5*\dx,\y+0*\dy) -- (\x+5*\dx,\y+1*\dy);
    \node () at (\x+5*\dx,\y+0*\dy) [leg] {.};
    \draw[-] (\x+4*\dx,\y+1*\dy) -- (\x+5*\dx,\y+1*\dy);
    
    \node () at (\x+5.5*\dx,\y+0*\dy) [label={[label distance = -1.5mm]right:\small{$Q_{\pi} = (1-x)^3$}}] {};
    \node () at (\x+5.5*\dx,\y+2*\dy) [label={[label distance = -1.5mm]right:\small{$P_{\pi} = 1$}}] {};

    \pgfmathsetmacro {\x}{18*\dx}
    \pgfmathsetmacro {\y}{-6*\dy}

    \draw[-] (\x+0*\dx,\y+0*\dy) -- (\x+0*\dx,\y+1*\dy);
    \node () at (\x+0*\dx,\y+0*\dy) [leg] {.};
    \draw[-] (\x+1*\dx,\y+0*\dy) -- (\x+1*\dx,\y+1*\dy);
    \node () at (\x+1*\dx,\y+0*\dy) [leg] {.};
    \draw[-] (\x+0*\dx,\y+1*\dy) -- (\x+1*\dx,\y+1*\dy);
    
    \draw[-] (\x+2*\dx,\y+0*\dy) -- (\x+2*\dx,\y+2*\dy);
    \node () at (\x+2*\dx,\y+0*\dy) [leg] {.};
    \draw[-] (\x+5*\dx,\y+0*\dy) -- (\x+5*\dx,\y+2*\dy);
    \node () at (\x+5*\dx,\y+0*\dy) [leg] {.};
    \draw[-] (\x+2*\dx,\y+2*\dy) -- (\x+5*\dx,\y+2*\dy);
    
    \draw[-] (\x+3*\dx,\y+0*\dy) -- (\x+3*\dx,\y+1*\dy);
    \node () at (\x+3*\dx,\y+0*\dy) [leg] {.};
    \draw[-] (\x+4*\dx,\y+0*\dy) -- (\x+4*\dx,\y+1*\dy);
    \node () at (\x+4*\dx,\y+0*\dy) [leg] {.};
    \draw[-] (\x+3*\dx,\y+1*\dy) -- (\x+4*\dx,\y+1*\dy);
    
    \node () at (\x+5.5*\dx,\y+0*\dy) [label={[label distance = -1.5mm]right:\small{$Q_{\pi} = (1-x)^3$}}] {};
    \node () at (\x+5.5*\dx,\y+2*\dy) [label={[label distance = -1.5mm]right:\small{$P_{\pi} = 1$}}] {};

    \pgfmathsetmacro {\x}{36*\dx}
    \pgfmathsetmacro {\y}{-6*\dy}

    \draw[-] (\x+0*\dx,\y+0*\dy) -- (\x+0*\dx,\y+2*\dy);
    \node () at (\x+0*\dx,\y+0*\dy) [leg] {.};
    \draw[-] (\x+3*\dx,\y+0*\dy) -- (\x+3*\dx,\y+2*\dy);
    \node () at (\x+3*\dx,\y+0*\dy) [leg] {.};
    \draw[-] (\x+0*\dx,\y+2*\dy) -- (\x+3*\dx,\y+2*\dy);
    
    \draw[-] (\x+1*\dx,\y+0*\dy) -- (\x+1*\dx,\y+1*\dy);
    \node () at (\x+1*\dx,\y+0*\dy) [leg] {.};
    \draw[-] (\x+2*\dx,\y+0*\dy) -- (\x+2*\dx,\y+1*\dy);
    \node () at (\x+2*\dx,\y+0*\dy) [leg] {.};
    \draw[-] (\x+1*\dx,\y+1*\dy) -- (\x+2*\dx,\y+1*\dy);
    
    \draw[-] (\x+4*\dx,\y+0*\dy) -- (\x+4*\dx,\y+1*\dy);
    \node () at (\x+4*\dx,\y+0*\dy) [leg] {.};
    \draw[-] (\x+5*\dx,\y+0*\dy) -- (\x+5*\dx,\y+1*\dy);
    \node () at (\x+5*\dx,\y+0*\dy) [leg] {.};
    \draw[-] (\x+4*\dx,\y+1*\dy) -- (\x+5*\dx,\y+1*\dy);
    
    \node () at (\x+5.5*\dx,\y+0*\dy) [label={[label distance = -1.5mm]right:\small{$Q_{\pi} = (1-x)^3$}}] {};
    \node () at (\x+5.5*\dx,\y+2*\dy) [label={[label distance = -1.5mm]right:\small{$P_{\pi} = 1$}}] {};
    
    \pgfmathsetmacro {\x}{0}
    \pgfmathsetmacro {\y}{-13*\dy}

    \draw[-] (\x+0*\dx,\y+0*\dy) -- (\x+0*\dx,\y+3*\dy);
    \node () at (\x+0*\dx,\y+0*\dy) [leg] {.};
    \draw[-] (\x+5*\dx,\y+0*\dy) -- (\x+5*\dx,\y+3*\dy);
    \node () at (\x+5*\dx,\y+0*\dy) [leg] {.};
    \draw[-] (\x+0*\dx,\y+3*\dy) -- (\x+5*\dx,\y+3*\dy);
    
    \draw[-] (\x+1*\dx,\y+0*\dy) -- (\x+1*\dx,\y+1*\dy);
    \node () at (\x+1*\dx,\y+0*\dy) [leg] {.};
    \draw[-] (\x+2*\dx,\y+0*\dy) -- (\x+2*\dx,\y+1*\dy);
    \node () at (\x+2*\dx,\y+0*\dy) [leg] {.};
    \draw[-] (\x+1*\dx,\y+1*\dy) -- (\x+2*\dx,\y+1*\dy);
    
    \draw[-] (\x+3*\dx,\y+0*\dy) -- (\x+3*\dx,\y+1*\dy);
    \node () at (\x+3*\dx,\y+0*\dy) [leg] {.};
    \draw[-] (\x+4*\dx,\y+0*\dy) -- (\x+4*\dx,\y+1*\dy);
    \node () at (\x+4*\dx,\y+0*\dy) [leg] {.};
    \draw[-] (\x+3*\dx,\y+1*\dy) -- (\x+4*\dx,\y+1*\dy);
    
    \node () at (\x+5.5*\dx,\y+0*\dy) [label={[label distance = -1.5mm]right:\small{$Q_{\pi} = \tfrac{(1-x)^3}{3}$}}] {};
    \node () at (\x+5.5*\dx,\y+2*\dy) [label={[label distance = -1.5mm]right:\small{$P_{\pi} = \tfrac{-x^3 + 3x^2 + 1}{3}$}}] {};
    
    \pgfmathsetmacro {\x}{18*\dx}
    \pgfmathsetmacro {\y}{-13*\dy}

    \draw[-] (\x+0*\dx,\y+0*\dy) -- (\x+0*\dx,\y+3*\dy);
    \node () at (\x+0*\dx,\y+0*\dy) [leg] {.};
    \draw[-] (\x+5*\dx,\y+0*\dy) -- (\x+5*\dx,\y+3*\dy);
    \node () at (\x+5*\dx,\y+0*\dy) [leg] {.};
    \draw[-] (\x+0*\dx,\y+3*\dy) -- (\x+5*\dx,\y+3*\dy);
    
    \draw[-] (\x+1*\dx,\y+0*\dy) -- (\x+1*\dx,\y+2*\dy);
    \node () at (\x+1*\dx,\y+0*\dy) [leg] {.};
    \draw[-] (\x+4*\dx,\y+0*\dy) -- (\x+4*\dx,\y+2*\dy);
    \node () at (\x+4*\dx,\y+0*\dy) [leg] {.};
    \draw[-] (\x+1*\dx,\y+2*\dy) -- (\x+4*\dx,\y+2*\dy);
    
    \draw[-] (\x+2*\dx,\y+0*\dy) -- (\x+2*\dx,\y+1*\dy);
    \node () at (\x+2*\dx,\y+0*\dy) [leg] {.};
    \draw[-] (\x+3*\dx,\y+0*\dy) -- (\x+3*\dx,\y+1*\dy);
    \node () at (\x+3*\dx,\y+0*\dy) [leg] {.};
    \draw[-] (\x+2*\dx,\y+1*\dy) -- (\x+3*\dx,\y+1*\dy);
    
    \node () at (\x+5.5*\dx,\y+0*\dy) [label={[label distance = -1.5mm]right:\small{$Q_{\pi} = \tfrac{(1-x)^3}{6}$}}] {};
    \node () at (\x+5.5*\dx,\y+2*\dy) [label={[label distance = -1.5mm]right:\small{$P_{\pi} = \tfrac{3x^2 + 1}{6}$}}] {};
    
\end{tikzpicture}
\caption{$P_{\pi}$ and $Q_{\pi}$ for the smallest non-crossing pair partitions}
\label{figure:pairPartitionPolynomialsOfTypePandQ}
\end{figure}
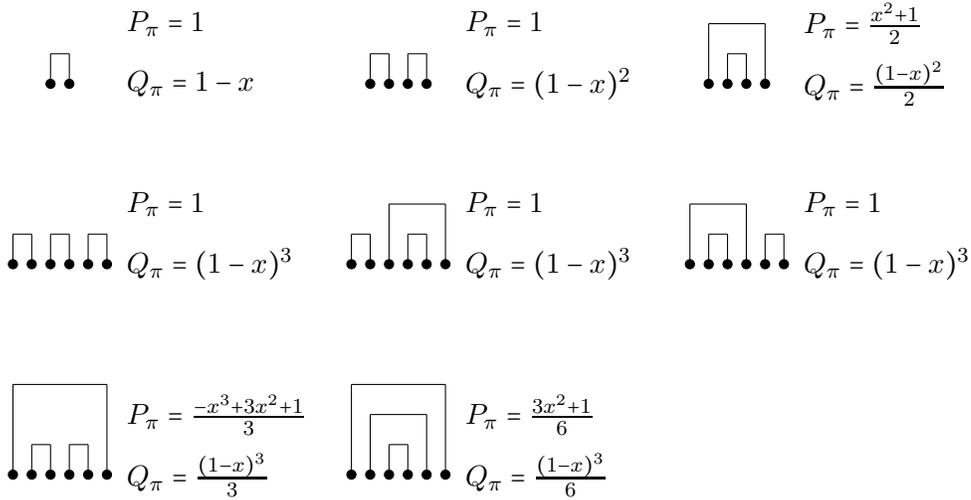

\noindent For instance,
$$
P_{\pi}(x) = \left[ \int \limits_{0}^{x} \tfrac{t^2+1}{2} \, \diff t + \int \limits_{x}^{1} \tfrac{(1-t)^2}{2} \, \diff t \right] \cdot 1 = \dfrac{3x^2 + 1}{6} \text{,}
$$
where $\pi = \{ \{ 1,6 \}, \{ 2,5 \}, \{ 3,4 \} \}$.
\end{example}

\begin{lemma}
\label{lemma:actingOfPairPartitionOperator}
Let $g \in \mathcal{D}_n$ for $n \in \mathbb{N}_+$ and $\mathbbm{1}'_n = \mathbbm{1}_n - \mathbbm{1}_{n,1}$. Then, for any $\pi \in \noncrk{2}$, we have
\begin{align*}
a_{ \pi } \, \Lambda & = P_{ \pi }(1) \Lambda \\
(a_{ \pi } \, g)(\vec{x}) & = [ P_{ \pi }(x_1) \mathbbm{1}_{n,1}(\vec{x}) + Q_{ \pi }(x_1)\mathbbm{1}'_n(\vec{x}) ] g(\vec{x}) \text{,}
\end{align*}
where $\vec{x} = (x_1, \ldots, x_n)$.
\end{lemma}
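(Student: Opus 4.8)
The plan is to induct on the number of blocks of $\pi$. In the base case $\pi = \{\emptyset\}$ the operator $a_\pi$ is the identity and $P_\pi = Q_\pi = \mathbbm{1}_{[0,1]}$; since every $g \in \mathcal{D}_n$ is supported on $\mathcal{X}_n \cap [0,1]^n$, where $\mathbbm{1}_{n,1} + \mathbbm{1}'_n = \mathbbm{1}_n \equiv 1$, the claimed formula collapses to $g = g$, and likewise $a_\pi \Lambda = \Lambda = P_\pi(1)\Lambda$. For the inductive step I would write $\pi = \pairPartitionRecurrence{\pi'}{\pi''}$ and evaluate $a_\pi g = a^* a_{\pi'} a a_{\pi''} g$ by applying the four operators from right to left, using the induction hypothesis once for $\pi''$ (on $n$ variables) and once for $\pi'$ (on the $n+1$ variables obtained after one creation).

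First I would apply the hypothesis to rewrite $a_{\pi''}g$, then create a variable, which multiplies by $\mathbbm{1}_{[0,1]}(x)\mathbbm{1}_{n+1}(x,\vec x)$. The decisive algebraic input is~\eqref{eq:recurrenceForCharacteristicFunction}, which I would recast as $\mathbbm{1}_{n+1,1}(x,\vec x) = \mathbbm{1}_<(x,x_1)\mathbbm{1}_{n,1}(\vec x)$ and $\mathbbm{1}'_{n+1}(x,\vec x) = \mathbbm{1}_>(x,x_1)\mathbbm{1}_n(\vec x)$. Applying the hypothesis for $\pi'$ (whose leading coordinate is now the freshly created $x$) and simplifying via the orthogonality relations $\mathbbm{1}_{n,1}\mathbbm{1}'_n = 0$ and $\mathbbm{1}_n\mathbbm{1}_{n,1} = \mathbbm{1}_{n,1}$ should collapse the product to
$$
\mathbbm{1}_{[0,1]}(x)\Big[P_{\pi'}(x)P_{\pi''}(x_1)\mathbbm{1}_<(x,x_1)\mathbbm{1}_{n,1}(\vec x) + Q_{\pi'}(x)\mathbbm{1}_>(x,x_1)\big(P_{\pi''}(x_1)\mathbbm{1}_{n,1}(\vec x) + Q_{\pi''}(x_1)\mathbbm{1}'_n(\vec x)\big)\Big]g(\vec x).
$$

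The final annihilation is where $P_\pi$ and $Q_\pi$ are born, and this is the step I would watch most carefully: $a^*$ integrates in $x$ over $[0,x_1]$ precisely when $\mathbbm{1}_{n,1}(\vec x)=1$ and always over $[x_1,1]$. On $\{\mathbbm{1}_{n,1}=1\}$ the full range $\int_0^1$ produces $P_{\pi''}(x_1)\big[\int_0^{x_1}P_{\pi'}(x)\,\diff x + \int_{x_1}^1 Q_{\pi'}(x)\,\diff x\big] = P_\pi(x_1)$, while on $\{\mathbbm{1}'_n=1\}$ only $\int_{x_1}^1$ survives and gives $Q_{\pi''}(x_1)\int_{x_1}^1 Q_{\pi'}(x)\,\diff x = Q_\pi(x_1)$, matching the recursive definitions term by term. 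The vacuum identity follows from the same chain restricted to $\Lambda$: by the hypothesis $a_{\pi''}\Lambda = P_{\pi''}(1)\Lambda$, creation gives $P_{\pi''}(1)\mathbbm{1}_{[0,1]} \in \mathcal{D}_1$, on which $a_{\pi'}$ acts as multiplication by $P_{\pi'}$ (since $\mathbbm{1}'_1 = 0$), and the closing step yields $\int_0^1 P_{\pi'}(x)P_{\pi''}(1)\,\diff x = P_\pi(1)$. I expect the only genuine difficulty to be bookkeeping — tracking which coordinate plays the role of $x_1$ after each creation and annihilation, and checking that the two-region structure of $a^*$ matches the asymmetry between $P_\pi$ (both integrals) and $Q_\pi$ (upper integral only); once the two indicator factorisations above are fixed, every step is forced and the induction closes.
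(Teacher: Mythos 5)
Your proof is correct and follows essentially the same route as the paper: induction on the number of blocks via the decomposition $\pi = a^* a_{\pi'}\, a\, a_{\pi''}$, the induction hypothesis applied first to $\pi''$ and then to $\pi'$ after one creation, the indicator factorisations $\mathbbm{1}_{n+1,1}(x,\vec x) = \mathbbm{1}_<(x,x_1)\mathbbm{1}_{n,1}(\vec x)$ and $\mathbbm{1}'_{n+1}(x,\vec x) = \mathbbm{1}_>(x,x_1)\mathbbm{1}_n(\vec x)$ coming from~\eqref{eq:recurrenceForCharacteristicFunction}, and the final two-region annihilation integral producing exactly the recursions for $P_\pi$ and $Q_\pi$. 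The only difference is cosmetic: you spell out the base case and the vacuum identity (which the paper dismisses as ``similar''), and your intermediate collapsed expression matches the paper's third displayed line term by term.
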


\begin{proof}
We will prove only the second equality, by induction on the number of blocks of a pair partition, the proof of the first one is similar. Let $\pi = \pairPartitionRecurrence{\pi'}{\pi''}$. We have
\begin{align*}
(a_{ \pi } g)(\vec{x}) = & \left[ \mathbbm{1}_{n,1}(\vec{x}) \int \limits_{0}^{x_1} \diff x + \int \limits_{x_1}^{1} \diff x \right] \, \mathbbm{1}_{n+1}(x,\vec{x}) (a_{ \pi' } a a_{ \pi'' }g)(x,\vec{x}) \\
= & \int \limits_0^1 \diff x \, \mathbbm{1}_{n+1}(x,\vec{x}) \left[ P_{ \pi' }(x) \mathbbm{1}_{n+1,1}(x,\vec{x}) + Q_{ \pi' }(x)\mathbbm{1}'_{n+1}(x,\vec{x}) \right] \mathbbm{1}_{n+1}(x,\vec{x}) \\ 
& \cdot \left[ P_{ \pi'' }(x_1) \mathbbm{1}_{n,1}(\vec{x}) + Q_{ \pi'' }(x_1)\mathbbm{1}'_n(\vec{x}) \right] g(\vec{x}) \\
= & \int \limits_0^1 \diff x \, \left[ P_{ \pi' }(x) \mathbbm{1}_{<}(x,x_1) \mathbbm{1}_{n,1}(\vec{x}) + Q_{ \pi' }(x) \mathbbm{1}_{>}(x,x_1) \mathbbm{1}_{n}(\vec{x}) \right] \\ 
& \cdot \left[ P_{ \pi'' }(x_1) \mathbbm{1}_{n,1}(\vec{x}) + Q_{ \pi'' }(x_1)\mathbbm{1}'_n(\vec{x}) \right] g(\vec{x}) \\
= & \int \limits_0^{x_1} \diff x \, \left[ P_{ \pi' }(x) P_{ \pi'' }(x_1) \mathbbm{1}_{n,1}(\vec{x}) \right] g(\vec{x}) + \int \limits_{x_1}^1 \diff x \, \left[ Q_{ \pi' }(x) P_{ \pi'' }(x_1) \mathbbm{1}_{n,1}(\vec{x}) \right] g(\vec{x}) \\
& + \int \limits_{x_1}^1 \diff x \, \left[ Q_{ \pi' }(x) Q_{ \pi'' }(x_1) \mathbbm{1}'_{n}(\vec{x}) \right] g(\vec{x}) \\
= & \ \mathbbm{1}_{n,1}(\vec{x}) \left[ \int \limits_0^{x_1} P_{ \pi' }(x) \, \diff x + \int \limits_{x_1}^1 Q_{ \pi' }(x) \, \diff x \right] P_{ \pi'' }(x_1) g(\vec{x}) \\
& + \mathbbm{1}'_n(\vec{x}) \left[ \int \limits_{x_1}^1 Q_{ \pi' }(x) \, \diff x \right] Q_{ \pi'' }(x_1) g(\vec{x}) \\
= & \ \left[ \mathbbm{1}_{n,1}(\vec{x}) P_{ \pi }(x_1) + \mathbbm{1}'_n(\vec{x}) Q_{ \pi }(x_1) \right] g(\vec{x}),
\end{align*}
which is our assertion.
\end{proof}

\begin{definition}
\label{definition:momentPolynomials}
Let us introduce two families of real-valued polynomials $(P_n)_{n=0}^{\infty}$ and $(Q_n)_{n=0}^{\infty}$, defined on $[0,1]$ recursively by 
\begin{align*}
& \begin{cases}
Q_0 = \mathbbm{1}_{[0,1]} \\
Q_{n+1}(x) = \sum \limits_{m=0}^n \left[ \int \limits_{x}^{1} Q_m(t) \, \diff t \right] Q_{n-m}(x) \text{,}
\end{cases} \\
& \begin{cases}
P_0 = \mathbbm{1}_{[0,1]} \\
P_{n+1}(x) = \sum \limits_{m=0}^n \left[ \int \limits_{0}^{x} P_m(t) \, \diff t + \int \limits_{x}^{1} Q_m(t) \, \diff t \right] P_{n-m}(x) \text{.}
\end{cases}
\end{align*}
\end{definition}

\begin{theorem}
\label{thm:distributionOfGaussianOperator}
For any natural number $n$, we have
$$
\phi(\omega^{2n}) = P_n(1) \qquad \text{and} \qquad \phi(\omega^{2n+1}) = 0 \text{.}
$$
\end{theorem}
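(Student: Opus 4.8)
The plan is to reduce everything to the formula \eqref{eq:sumOnlyOverPairPartitions} obtained in the proof of Theorem~\ref{theorem:operator_model}, which gives $\phi(\omega^k) = \sum_{\pi \in \noncrk{2}(k)} \phi(a_\pi)$ for every $k \in \mathbb{N}$. For odd $k = 2n+1$ there are no non-crossing pair partitions of $[2n+1]$, so the sum is empty and $\phi(\omega^{2n+1}) = 0$ by our convention. For even $k = 2n$, Lemma~\ref{lemma:actingOfPairPartitionOperator} yields $a_\pi \, \Lambda = P_\pi(1) \, \Lambda$, whence $\phi(a_\pi) = \langle a_\pi \, \Lambda, \Lambda \rangle = P_\pi(1)$, since $\phi$ is the vector state associated with $\Lambda$. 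It therefore remains to prove the single identity
$$
\sum_{\pi \in \noncrk{2}(2n)} P_\pi(1) = P_n(1) \text{.}
$$

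First I would establish the stronger, polynomial-level identities
$$
P_n = \sum_{\pi \in \noncrk{2}(2n)} P_\pi \qquad \text{and} \qquad Q_n = \sum_{\pi \in \noncrk{2}(2n)} Q_\pi
$$
by simultaneous induction on $n$, from which the desired equality follows by evaluating at $x=1$. The base case $n=0$ is immediate because $\noncrk{2}(0) = \{\emptyset\}$ and $P_{\{\emptyset\}} = Q_{\{\emptyset\}} = \mathbbm{1}_{[0,1]} = P_0 = Q_0$. The inductive step rests on the standard recursive decomposition of non-crossing pair partitions: for $\pi \in \noncrk{2}(2n+2)$, the block containing the first leg pairs $1$ with some $2m+2$ (where $0 \leq m \leq n$), and removing it splits $\pi$ into an inner partition $\pi' \in \noncrk{2}(2m)$ on the legs strictly between $1$ and $2m+2$ together with an outer partition $\pi'' \in \noncrk{2}(2(n-m))$ on the remaining legs, so that $\pi = \pairPartitionRecurrence{\pi'}{\pi''}$. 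This yields a bijection $\noncrk{2}(2n+2) \cong \bigsqcup_{m=0}^n \noncrk{2}(2m) \times \noncrk{2}(2(n-m))$.

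Next I would substitute this decomposition into the recursive definitions of $P_\pi$ and $Q_\pi$ and regroup the sums according to the value of $m$. For the $Q$-family, the recursion $Q_\pi(x) = \left[ \int_x^1 Q_{\pi'}(t)\,\diff t \right] Q_{\pi''}(x)$ together with the induction hypotheses $\sum_{\pi'} Q_{\pi'} = Q_m$ and $\sum_{\pi''} Q_{\pi''} = Q_{n-m}$ makes the double sum collapse exactly to the defining recursion for $Q_{n+1}$ in Definition~\ref{definition:momentPolynomials}; since the $Q$-recursion involves only $Q$-data, this identity can be settled first. The $P$-family is handled identically, except that its recursion $P_\pi(x) = \left[ \int_0^x P_{\pi'}(t)\,\diff t + \int_x^1 Q_{\pi'}(t)\,\diff t \right] P_{\pi''}(x)$ couples $P$ and $Q$, so here I would invoke both inductive hypotheses and recover the recursion for $P_{n+1}$.

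The computations are routine once the bijection is in place; the only point requiring care is the bookkeeping of block counts, namely verifying that the inner and outer pieces carry exactly $m$ and $n-m$ blocks respectively, so that the summation index $m$ ranges precisely as in Definition~\ref{definition:momentPolynomials}. This matching of the combinatorial recursion for non-crossing pair partitions with the analytic recursion for the polynomials $(P_n)$ and $(Q_n)$ is the heart of the argument and essentially the only place where an error could arise; everything else is formal.
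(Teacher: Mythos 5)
Your proposal is correct and takes essentially the same route as the paper: the paper's proof likewise combines~\eqref{eq:sumOnlyOverPairPartitions} and Lemma~\ref{lemma:actingOfPairPartitionOperator} with the identities $P_n = \sum_{\pi \in \noncrk{2}(2n)} P_{\pi}$ and $Q_n = \sum_{\pi \in \noncrk{2}(2n)} Q_{\pi}$, which it establishes by induction (its equation~\eqref{eq:polynomialsPandQasSumsIndexedByNoncrossingPairPartitions}). The only difference is one of detail: the paper dispatches the induction in a single line, while you spell out the underlying bijective decomposition of non-crossing pair partitions and the matching of recursions, which is exactly the content the paper leaves implicit.
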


\begin{proof}
By induction we get
\begin{equation}
\label{eq:polynomialsPandQasSumsIndexedByNoncrossingPairPartitions}
Q_n(x) = \sum \limits_{\pi \in \noncrk{2}(2n)} Q_{\pi}(x) \qquad
\text{and} \qquad
P_n(x) = \sum \limits_{\pi \in \noncrk{2}(2n)} P_{\pi}(x) \text{.}
\end{equation}
The assertion follows then from~\eqref{eq:sumOnlyOverPairPartitions} and Lemma~\ref{lemma:actingOfPairPartitionOperator}.
\end{proof}

Now we want to investigate the moment generating function for the operator $\omega$, namely $M(z) = \sum \limits_{n=0}^\infty \varphi(\omega^n) z^n$. Let
$$
f(z, x) = \sum \limits_{n=0}^\infty P_n(x) z^n \text{.}
$$
Since $M(z) = f(z^2, 1)$, we state and prove several facts about $f(z,x)$.

\begin{proposition}
For any $0 \leq |z| < \frac{1}{4}$ and any $x \in [0,1]$, the following integral equation holds:
\begin{equation}
\label{eq:integralEquation}
f(z,x) = 1 + z \left[ \int \limits_{0}^{x} f(z, t) \, \diff t + \dfrac{1-\sqrt{2zx+1-2z}}{z} \right] f(z,x) \text{.}
\end{equation}
\end{proposition}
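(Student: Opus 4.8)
The plan is to pass to generating functions in the variable $z$ for both families of polynomials simultaneously. Alongside $f(z,x)=\sum_{n=0}^\infty P_n(x)z^n$ I would introduce $q(z,x)=\sum_{n=0}^\infty Q_n(x)z^n$, and first establish that both series converge nicely. By~\eqref{eq:polynomialsPandQasSumsIndexedByNoncrossingPairPartitions} we have $P_n(x)=\sum_{\pi\in\noncrk{2}(2n)}P_\pi(x)$ and $Q_n(x)=\sum_{\pi\in\noncrk{2}(2n)}Q_\pi(x)$. A straightforward induction on the number of blocks, using the defining recurrences for $P_\pi$ and $Q_\pi$, shows $0\le P_\pi(x)\le 1$ and $0\le Q_\pi(x)\le 1$ for all $x\in[0,1]$: for $Q_\pi$ this is immediate from Remark~\ref{remark:polynomialsQexpressedByUsingMurakisNumbers} (since $q_\pi\le1$ and $(1-x)^{|\pi|}\le1$), and for $P_\pi$ one uses $\int_0^x P_{\pi'}\,\diff t+\int_x^1 Q_{\pi'}\,\diff t\le x+(1-x)=1$. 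Since $|\noncrk{2}(2n)|$ is the Catalan number $C_n\le 4^n$, we obtain $0\le P_n(x),Q_n(x)\le 4^n$, so both series converge absolutely and uniformly in $x\in[0,1]$ for $|z|<\tfrac14$. This legitimizes all the term-by-term integrations below and shows that $f$ and $q$ are analytic in $z$ on the disk $|z|<\tfrac14$.

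Next I would convert the recurrence for $Q_n$ from Definition~\ref{definition:momentPolynomials} into a closed form for $q$. Multiplying the recursion by $z^{n+1}$, summing over $n\ge0$, and recognizing the convolution as a Cauchy product gives
$$
q(z,x)=1+z\left[\int_x^1 q(z,t)\,\diff t\right]q(z,x).
$$
Writing $G(z,x)=\int_x^1 q(z,t)\,\diff t$, so that $\partial_x G=-q$ and $G(z,1)=0$, this becomes the separable first-order ODE $\partial_x G=1/(zG-1)$ in the variable $x$. Integrating $\int(zG-1)\,\diff G=\int\diff x$ yields $\tfrac{z}{2}G^2-G=x-1$ after fixing the integration constant by $G(z,1)=0$, and solving the resulting quadratic gives
$$
G(z,x)=\int_x^1 q(z,t)\,\diff t=\frac{1-\sqrt{2zx+1-2z}}{z}.
$$
Here the minus branch of the square root is forced: for $|z|<\tfrac14$ and $x\in[0,1]$ the radicand $2zx+1-2z=1-2z(1-x)$ stays within distance $2|z|<\tfrac12$ of $1$, hence in the domain of the principal square root, and only the branch with $\sqrt{\,\cdot\,}\to1$ as $z\to0$ is compatible with the boundary condition $G(z,1)=0$ and with the analyticity of $q$ established in the first step.

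Finally I would treat $f$ in the same way. Multiplying the $P_{n+1}$ recurrence by $z^{n+1}$ and summing produces
$$
f(z,x)=1+z\left[\int_0^x f(z,t)\,\diff t+\int_x^1 q(z,t)\,\diff t\right]f(z,x),
$$
and substituting the closed form of $\int_x^1 q(z,t)\,\diff t$ from the previous step gives exactly~\eqref{eq:integralEquation}. I expect the only genuinely delicate point to be the treatment of $q$: obtaining the explicit square root requires solving the ODE and correctly selecting the branch, and one must use the analyticity of $q=-\partial_x G$ on $|z|<\tfrac14$ to be sure we have picked the solution of the quadratic that actually reproduces the generating function we started with, rather than the spurious $+$ branch. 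The convergence bounds and the two Cauchy-product computations are routine once the Catalan estimate is in place.
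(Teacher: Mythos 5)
Your proof is correct, but it reaches the key closed form $\int_x^1 q(z,t)\,\diff t=\frac{1-\sqrt{2zx+1-2z}}{z}$ by a genuinely different route than the paper. The paper never writes a functional equation for $q$ (its $g$): it combines \eqref{eq:polynomialsPandQasSumsIndexedByNoncrossingPairPartitions} with Remark~\ref{remark:polynomialsQexpressedByUsingMurakisNumbers} to get $Q_n(x)=q_n(1-x)^n$, where $q_n=2^{-n}\binom{2n}{n}$ are the even moments of the standard arcsine law, sums the resulting binomial series to obtain $g(z,x)=(2zx+1-2z)^{-1/2}$ in closed form, and then simply integrates; no ODE and no branch selection ever arise. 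You instead convert the recursion of Definition~\ref{definition:momentPolynomials} into a separable ODE and the quadratic relation $\tfrac{z}{2}G^2-G=x-1$ for $G(z,x)=\int_x^1 q(z,t)\,\diff t$, and then must pin down the root via $G(z,1)=0$ and continuity. That step is sound — the two roots never meet on $[0,1]$ for $|z|<\tfrac14$ because the radicand $1-2z(1-x)$ stays at distance less than $\tfrac12$ from $1$, so connectedness of $[0,1]$ keeps $G$ on the minus branch — but it is the one delicate point of your argument, and it is precisely the solve-the-ODE-and-select-the-branch technique that the paper defers to Lemma~\ref{lemma:solvingODE}, where it is applied to $f$ itself. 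What your route buys: you never need the explicit form of $Q_n$ nor the recognition of the central-binomial generating function; everything follows mechanically from the two recursions. What the paper's route buys: brevity, an explicit formula for $q$ itself rather than only for its integral, and the thematically relevant identification of the $Q$-part with the arcsine distribution. Your convergence control also differs in flavor (per-partition bounds $0\le P_\pi,Q_\pi\le1$ multiplied by the Catalan count $|\noncrk{2}(2n)|=C_n$, versus the paper's induction $|P_n(x)|\le C_n$ carried out directly on the recursion for $P_{n+1}$ using $q_n\le C_n$), but both give the same radius $\tfrac14$, and the concluding Cauchy-product step is the same in both arguments.
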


\begin{proof}
Let $g(z, x) = \sum \limits_{n=0}^{\infty} Q_n(x) z^n$. We first show that
$$
\int \limits_{x}^1 g(z,t) \, \diff t = \dfrac{1-\sqrt{2zx+1-2z}}{z}
$$
for $|z(1-x)| < \tfrac{1}{2}$ and $z \neq 0$.
From~\eqref{eq:polynomialsPandQasSumsIndexedByNoncrossingPairPartitions} and Remark~\ref{remark:polynomialsQexpressedByUsingMurakisNumbers} that $Q_n(x) = q_n (1-x)^n$, where $(q_n)_{n=1}^{\infty}$ is the sequence of even moments of the standard arcsine distribution (i.e. $q_n = 2^{-n} { {2n}\choose{n} }$) satisfying the following recurrence relation:
$$
\begin{cases}
q_0 = 1 \\
q_{n+1} = \sum \limits_{m=0}^n \tfrac{1}{m+1} q_m q_{n-m} \text{.}
\end{cases}
$$
Therefore,
$$
g(z,x) = \dfrac{1}{\sqrt{2zx + 1 - 2z}} \text{.}
$$
Now, we will show by induction that 
$$
|P_n(x)| \leq C_n,
$$
where $(C_n)_{n=0}^\infty$ are Catalan numbers, which satisfy the following recurrence relation:
$$
\begin{cases}
C_0 = 1 \\
C_{n+1} = \sum \limits_{k=0}^n C_k C_{n-k}
\end{cases} \text{.}
$$
Using the fact that $q_n \leq C_n$ and the induction hypothesis, we have
\begin{align*}
|P_{n+1}(x)| \leq & \sum \limits_{k=0}^n \left[ \int \limits_{0}^{x} |P_k(t)| \, \diff t + \int \limits_{x}^{1} |Q_k(t)| \, \diff t \right] |P_{n-k}(x)| \\
\leq & \sum \limits_{k=0}^n \left[ x C_k + (1-x)C_k \right] C_{n-k} = C_{n+1} \text{.}
\end{align*}
The assertion follows then from the above inequality and Definition~\ref{definition:momentPolynomials}.
\end{proof}

Now we will give the solution of~\eqref{eq:integralEquation} in the implicit form. 

\begin{definition}
For $0 < t \leq \exp \big(\tfrac{\sqrt{3} \pi}{9} \big)$, let $S(t)$ be the inverse function of $\exp(T(t))$ on $[0, \infty)$, where
$$
T(t) = \dint \limits_{t}^{1} \dfrac{s \, \diff s}{s^2 - s + 1} = - \tfrac{1}{2} \log(t^2 - t + 1) - \tfrac{ \sqrt{3} }{3} \left[ \arctan \left( \tfrac{2t-1}{ \sqrt{3} } \right) - \tfrac{\pi}{6} \right] \text{.}
$$
The function $S$ is well defined, since for $t < 0$ we have $T'(t) = \tfrac{-t}{t^2-t+1} < 0$.
\end{definition}

\begin{lemma}
\label{lemma:solvingODE}
The solution $f$ of the integral equation~\eqref{eq:integralEquation} is given by
$$
f(z,x) = \left[ \sqrt{2zx + 1-2z} S \left( \sqrt{ \tfrac{2zx + 1-2z}{1-2z} } \right) \right]^{-1}
$$
for $z \in \left( 0, \tfrac{1}{4} \right)$ and $x \in [0, 1]$.
\end{lemma}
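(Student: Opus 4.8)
The plan is to differentiate the integral equation~\eqref{eq:integralEquation} in $x$ (with $z$ fixed), recognize the resulting Abel equation, and then merely \emph{verify} that the stated formula solves it with the correct value at $x=0$. Throughout write $w=\sqrt{2zx+1-2z}$ and $c=\sqrt{1-2z}$, so that the candidate is $\tilde f=[w\,S(w/c)]^{-1}$ and $w'=z/w$. Rewriting~\eqref{eq:integralEquation} as $f=1+z(u+h)f$ with $u(x)=\int_0^x f(z,t)\,\diff t$ and $h=\tfrac{1-w}{z}$, and noting $1-z(u+h)=w-zu$, the equation is equivalent to the explicit initial-value problem $u'=(w-zu)^{-1}$, $u(0)=0$, whose right-hand side is Lipschitz in $u$ as long as $w-zu$ stays away from $0$; by Picard--Lindel\"of it has at most one solution, so $f$ is the unique solution of~\eqref{eq:integralEquation}.

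Differentiating $f=1+z(u+h)f$ in $x$, substituting $u'=f$, $h'=-1/w$, and using the equation itself to replace $z(u+h)$ by $1-1/f$, I obtain the Abel equation $f'=zf^3-\tfrac{z}{w}f^2$; and setting $x=0$ in~\eqref{eq:integralEquation} gives $f(z,0)=(1-2z)^{-1/2}=1/c$. Conversely, any solution of this Abel equation with $f(z,0)=1/c$ satisfies~\eqref{eq:integralEquation}: with $u=\int_0^x f$ the function $u-\tfrac1z(w-1/f)$ vanishes at $x=0$ and, by the Abel equation, has vanishing derivative, hence is identically $0$, which is exactly~\eqref{eq:integralEquation}. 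It therefore suffices to check that $\tilde f$ solves the Abel equation and satisfies $\tilde f(z,0)=1/c$.

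This I would do through two substitutions. Viewing $f$ as a function $F$ of $w$ turns the Abel equation into $\tfrac{\diff F}{\diff w}=wF^3-F^2$, and the substitution $G=wF$ turns that into the separable equation $wG'=G(G^2-G+1)$. For the candidate, $G=wF=1/S(w/c)$; writing $\sigma=S(w/c)$, the definition of $S$ gives $T(\sigma)=\log(w/c)$, and differentiating this identity, using $T'(\sigma)=-\sigma/(\sigma^2-\sigma+1)$ and then substituting $\sigma=1/G$, yields precisely $wG'=G(G^2-G+1)$. Thus $\tilde f$ solves the Abel equation. For the boundary value, at $x=0$ we have $w=c$, hence $w/c=1$, and since $T(1)=0$ we get $S(1)=1$ and $\tilde f(z,0)=1/c$, as required. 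One should also note that for $z\in(0,\tfrac14)$ and $x\in[0,1]$ the argument $w/c$ ranges over $[1,1/c]\subset(0,e^{\sqrt3\pi/9}]$, so $S(w/c)$ is defined and positive there.

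The computation through the implicit definition of $S$ is mechanical once the two substitutions are in hand, so I expect the only real subtlety to be the equivalence with the integral equation: one must confirm that $w-zu=w\,S(w/c)$ stays bounded away from $0$ on the whole range $z\in(0,\tfrac14)$, $x\in[0,1]$, which is what guarantees the Lipschitz hypothesis and hence the uniqueness that identifies $\tilde f$ with the solution $f=\sum_n P_n(x)z^n$ of the preceding proposition.
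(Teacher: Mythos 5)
Your proposal is correct, and it establishes the formula by a route whose logical structure differs from the paper's, even though the underlying calculus is the same. The paper performs the same first reduction — equation~\eqref{eq:integralEquation} with $y(x)=\int_0^x f(z,t)\,\diff t$ is exactly the Abel problem $y'(\sqrt{2zx+1-2z}-zy)=1$, $y(0)=0$ — but then proceeds \emph{constructively}: the substitution $\xi=\sqrt{2zx+1-2z}$, $u=\xi-zy$ turns it into $uu'-u=-\xi$, whose general solution is quoted in parametric form $\xi=Ce^{T(t)}$, $u=t\xi$ from the handbook of Polyanin and Zaitsev~\cite{PolyaninZaitsev2002}, and the constant $C=\sqrt{1-2z}$ is then fitted to the initial condition. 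Your separable equation $wG'=G(G^2-G+1)$ is precisely that handbook solution in disguise: putting $G=1/t$ and separating variables gives $\diff(\log w)=-\tfrac{t\,\diff t}{t^2-t+1}=\diff T(t)$, i.e.\ $w=Ce^{T(t)}$, and your $G=wF$ is the reciprocal of the paper's parameter $t=u/\xi$. What your version buys is twofold. First, it is self-contained: instead of citing a general solution, you verify the candidate directly by differentiating the implicit relation $T(\sigma)=\log(w/c)$. Second, it makes explicit the uniqueness step that the paper leaves implicit — fitting an initial condition to a ``general solution'' identifies \emph{the} solution only if that family exhausts all solutions, which is exactly the Picard--Lindel\"of argument you supply. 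Moreover, the ``subtlety'' you flag at the end takes care of itself: any solution $u$ of the IVP on $[0,1]$ satisfies $u'(w-zu)=1$ pointwise, so $w-zu$ is continuous and never vanishes, hence is positive (it equals $\sqrt{1-2z}$ at $x=0$) and bounded away from zero on the compact interval; thus the local Lipschitz condition holds along every trajectory and uniqueness is automatic, while for the candidate itself one has $w\,S(w/c)\geq \sqrt{1-2z}\,S\bigl(1/\sqrt{1-2z}\bigr)>0$. What the paper's version buys in exchange is brevity and a derivation (it shows how the formula is found, not merely that it is right). One detail worth recording in your write-up: the passage from $x$ to $w$ is legitimate because $w$ is strictly increasing for $z\in\bigl(0,\tfrac14\bigr)$, and $S$ is differentiable on $[1,1/\sqrt{1-2z}]$ because $T'<0$ on the corresponding $t$-range — both facts are used silently in your two substitutions, and the second is the same domain check the paper performs via $\sqrt{1-2z}\,e^{T(0)}>1$.
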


\begin{proof}
Fix $z \in \left( 0, \tfrac{1}{4} \right)$. Let $y(x)=\int \limits_0^x f(z,t) \, \diff t$. The equation~\eqref{eq:integralEquation} is equivalent to the initial value problem of the form
\begin{equation}
\label{eq:IVP1}
\begin{cases}
y' \left( \sqrt{2zx + 1-2z} - z y \right) = 1 \\
y(0) = 0
\end{cases}, \quad y=y(x), \quad x \in [0,1],
\end{equation}
which is an Abel ordinary differential equation of the second kind with an initial condition. 

First, we change coordinates
$$
\begin{cases}
x = \dfrac{\xi^2 + 2z - 1}{2z} \\
y = \dfrac{\xi - u(\xi)}{z}
\end{cases}
$$
for $\xi \geq 0$. The inverse mapping is given by
$$
\begin{cases}
\xi = \sqrt{2zx + 1 - 2z} \\
u = \xi - z y(x) \text{.}
\end{cases}
$$
After differentiation with respect to $\xi$, our ODE takes the form 
$$
\dfrac{1-u'}{\xi} \left( \xi - z \dfrac{\xi - u}{z} \right) = 1 \text{,}
$$
where $u' = u'(\xi) = \tfrac{\diff}{\diff\xi}u(\xi)$. Our new variable $\xi$ belongs to the interval $[\sqrt{1-2z}, 1]$ and the initial condition has takes form
$$
u(\sqrt{1-2z}) = \sqrt{1-2z} \text{.}
$$
Therefore, our initial value problem takes on the form
\begin{equation}
\label{eq:IVP2}
\begin{cases}
u u' - u = -\xi \\
u(\sqrt{1-2z}) = \sqrt{1-2z} \text{.}
\end{cases}
\end{equation}

The general solution of the above equation can be found in~\cite{PolyaninZaitsev2002} (under the number 1.3.1.2.) and has the parametric form
$$
\begin{cases}
\xi(t) = Ce^{T(t)} \\
u(t) = t \xi(t) \text{.}
\end{cases}
$$
Since the curve given above must contain the point $(\sqrt{1-2z}, \sqrt{1-2z})$, we conclude that $C=\sqrt{1-2z}$ and $t \geq 0$ and thus the solution of problem~\eqref{eq:IVP2} is given by
\begin{equation}
\label{eq:solutionOfModifiedODE}
u(\xi) = \xi S \left( \tfrac{\xi}{\sqrt{1-2z}} \right) \text{.}
\end{equation}
Since for every $z \in \left( 0, \tfrac{1}{4} \right)$ we have $\sqrt{1-2z} e^{T(0)} > 1$, we conclude that $[\sqrt{1-2z}, 1]$ is a subset of $(0, \sqrt{1-2z} e^{T(0)}]$, the domain of $S$. Combining~\eqref{eq:IVP2} with~\eqref{eq:solutionOfModifiedODE}, we obtain 
$$
u'(\xi) = 1-\dfrac{1}{S \left( \tfrac{\xi}{\sqrt{1-2z}} \right)} \text{,}
$$
which, after going back to the coordinates $(x, y)$, gives our assertion.
\end{proof}

\begin{corollary}
The moment generating function of the standard V-monotone Gaussian distribution has the implicit form
\begin{equation}
\label{eq:CLTMomentGeneratingFunction}
M(z) = 
\begin{cases}
1 & \text{if $z = 0$} \\
\left[ S \left( \dfrac{1}{\sqrt{ 1-2z^2 }} \right) \right]^{-1} & \text{otherwise}
\end{cases}
\end{equation}
for $z \in \left( -\tfrac{1}{2}, \tfrac{1}{2} \right)$.
\end{corollary}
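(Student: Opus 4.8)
The plan is simply to specialize the closed form of Lemma~\ref{lemma:solvingODE} to $x=1$ and then pass from the variable $z$ to $z^2$. First I would recall that the odd moments of $\omega$ vanish by Theorem~\ref{thm:distributionOfGaussianOperator}, so the moment generating function reduces to a series in even powers,
$$
M(z) = \sum \limits_{n=0}^{\infty} \varphi(\omega^{2n}) z^{2n} = \sum \limits_{n=0}^{\infty} P_n(1) z^{2n} = f(z^2, 1) \text{,}
$$
where $f(z,x) = \sum_{n=0}^{\infty} P_n(x) z^n$ is the function studied above and the middle equality uses $\varphi(\omega^{2n}) = P_n(1)$ from Theorem~\ref{thm:distributionOfGaussianOperator}. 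The bound $|P_n(x)| \leq C_n$ established in the proof of the integral equation~\eqref{eq:integralEquation}, together with the fact that the Catalan generating function has radius of convergence $\tfrac14$, shows that $w \mapsto f(w,1)$ converges for $|w| < \tfrac14$; hence $M(z) = f(z^2,1)$ is a well-defined analytic function on $z \in \left( -\tfrac12, \tfrac12 \right)$, exactly the interval asserted.

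Next I would read off the closed form. Setting $x = 1$ in Lemma~\ref{lemma:solvingODE}, the factor $\sqrt{2zx + 1 - 2z}$ collapses to $\sqrt{1} = 1$ and the argument of $S$ becomes $\sqrt{1/(1-2z)}$, so that for $z \in \left( 0, \tfrac14 \right)$
$$
f(z,1) = \left[ S \left( \frac{1}{\sqrt{1-2z}} \right) \right]^{-1} \text{.}
$$
Substituting $z \mapsto z^2$ then gives, for $z^2 \in \left( 0, \tfrac14 \right)$, that is for $z \in \left( -\tfrac12, \tfrac12 \right) \setminus \{0\}$,
$$
M(z) = f(z^2, 1) = \left[ S \left( \frac{1}{\sqrt{1-2z^2}} \right) \right]^{-1} \text{,}
$$
which is the claimed expression away from the origin.

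Finally I would dispose of the point $z=0$ and verify that the argument of $S$ stays inside its domain. At $z = 0$ one has directly $M(0) = \varphi(\omega^0) = P_0(1) = 1$, matching the first branch of~\eqref{eq:CLTMomentGeneratingFunction}. For the domain, note that $z \in \left( -\tfrac12, \tfrac12 \right)$ forces $1 - 2z^2 \in \left( \tfrac12, 1 \right]$, so $1/\sqrt{1-2z^2} \in \left[ 1, \sqrt{2} \right)$; since $\sqrt{2} < \exp \big( \tfrac{\sqrt{3}\pi}{9} \big)$, which is the right endpoint of the domain of $S$ (obtained from the value $T(0) = \tfrac{\sqrt{3}\pi}{9}$ computed from its definition), the function $S$ is indeed evaluated within its domain throughout $\left( -\tfrac12, \tfrac12 \right)$. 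I do not anticipate any genuine obstacle here: the whole content is a substitution into an already-solved Abel equation, and the only points requiring care are the bookkeeping of the convergence interval, the isolated value $z=0$, and the check that $1/\sqrt{1-2z^2}$ never leaves $\left( 0, \exp \big( \tfrac{\sqrt{3}\pi}{9} \big) \right]$.
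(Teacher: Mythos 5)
Your proposal is correct and follows essentially the same route as the paper, which treats this corollary as an immediate consequence of Lemma~\ref{lemma:solvingODE} via the identity $M(z) = f(z^2,1)$: setting $x=1$ collapses the prefactor $\sqrt{2zx+1-2z}$ to $1$ and the argument of $S$ to $1/\sqrt{1-2z}$, and substituting $z \mapsto z^2$ covers all $z \in \left(-\tfrac12,\tfrac12\right)\setminus\{0\}$ since then $z^2 \in \left(0,\tfrac14\right)$. Your additional bookkeeping (vanishing odd moments, the Catalan bound giving convergence on $|z|<\tfrac12$, and the check that $1/\sqrt{1-2z^2} \in [1,\sqrt2) \subseteq \bigl(0, \exp\bigl(\tfrac{\sqrt3\pi}{9}\bigr)\bigr]$ lies in the domain of $S$) is exactly what the paper leaves implicit, and it is all accurate.
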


From the technical point of view, our central limit distribution is not easy to handle. So far, we have only found the moment generating function in the implicit form. The situation resembles that of the distribution of $TT^*$, where $T$ is the triangular operator, studied by Dykema and Haagerup~\cite{DykemaHaagerup2005}, who have found the density function in the implicit form.

\section*{Acknowledgements}
I would like to thank Professor Romuald Lenczewski for suggesting the problem and his continuous help during the preparation of this paper. I would also like to thank the referees for carefully reading the paper as well as for their suggestions and many valuable comments. I am also grateful to my colleagues, Dariusz Kosz and Paweł Plewa for several helpful remarks.

This work has been supported by the Wroc\l{}aw University of Science and Technology (Grant number 0401/0121/17).


\normalsize

\end{document}